\newtheorem{theorem}{Theorem}[section]
\newtheorem{proposition}[theorem]{Proposition}
\newtheorem{lemma}[theorem]{Lemma}
\newtheorem{corollary}[theorem]{Corollary}
\newtheorem{definition}[theorem]{Definition}
\newtheorem{notation}[theorem]{Notation}
\newtheorem{remark}[theorem]{Remark}
\newcommand{\N}{\mathbb{N}}
\newcommand{\R}{\mathbb{R}}
\newcommand{\A}{\mathsf{A}}
\newcommand{\B}{\mathsf{B}}
\newcommand{\Id}{\mathrm{Id}}
\newcommand*{\mforall}{\tilde\forall}
\newcommand{\norm}[1]{\left\lVert#1\right\rVert}
\newcommand{\ip}[1]{\langle \, #1 \, \rangle}
\begin{document}

\title{Effective metastability for a method of alternating resolvents
\thanks{2020 Mathematics Subject Classification: 47J25, 47H05, 47H09, 03F10. Keywords: Alternating resolvents; maximal monotone operators; proximal point algorithm; metastability; proof mining.}}
\author{Bruno Dinis\thanks{Departamento de Matem\'atica, Faculdade de Ci\^encias da
Universidade de Lisboa, Campo Grande, Edif\'icio~C6, 1749-016~Lisboa, Portugal. \protect\url{bmdinis@fc.ul.pt}.}
\and
Pedro Pinto\thanks{Department of Mathematics, Technische Universit{\"a}t Darmstadt, Schlossgartenstra\ss{}e 7, 64289 Darmstadt, Germany.\newline	{\protect\url{pinto@mathematik.tu-darmstadt.de}}.}}

\maketitle

\begin{abstract}
A generalized method of alternating resolvents was introduced by Boikanyo and Moro{\c s}anu as a way to approximate common zeros of two maximal monotone operators. In this paper we analyse the strong convergence of this algorithm under two different sets of conditions. As a consequence we obtain effective rates of metastability (in the sense of Terence Tao) and quasi-rates of asymptotic regularity. Furthermore, we bypass the need for sequential weak compactness in the original proofs. Our quantitative results are obtained using proof-theoretical techniques in the context of the proof mining program. 
\end{abstract}
\section{Introduction}


In this paper we analyse the strong convergence of a generalized method of alternating resolvents in Hilbert spaces, introduced by Boikanyo and Moro{\c s}anu.

Let $H$ be a Hilbert space and $\A$ and $\B$ be two maximal monotone operators. Motivated by the convex feasibility problem  and the alternating projections method \cite{combettes1997hilbertian,Deutsch1985}, the \emph{method of alternating resolvents} is recursively defined as follows: $x_0 \in H$ and
\begin{equation*}
\begin{cases}
x_{2n+1} = J^{\A}_{\beta_n}(x_{2n})\\
x_{2n+2} = J^{\B}_{\mu_n}(x_{2n+1})
\end{cases}
\end{equation*}  
where $(\beta_n),(\mu_n)$ are sequences of positive real numbers. This method was shown to converge weakly to a common zero of the operators, first in \cite{BHCR(05)} for the case when $(\beta_n)$ and $(\mu_n)$ are constant and equal, and later in \cite{BM(AM11)} for the general case (also including error terms). In order to obtain strong convergence, the method of alternating resolvents was generalized by Boikanyo and Moro{\c s}anu in \cite{BM(13)} in the following way.
For $n \in \N$ we define
\begin{equation}\label{HPPA2e}\tag{\textsf{MAR}}
\begin{cases}
x_{2n+1}&=J_{\beta_n}^{\A} \left( \alpha_n u+(1-\alpha_n) x_{2n}+e_n\right)\\[1mm] 
x_{2n+2}&=J_{\mu_n}^{\B} \left( \lambda_n u+(1-\lambda_n) x_{2n+1}+e'_n\right)
\end{cases}
\end{equation}
where $u,x_0 \in H$ are given, $(\alpha_n), (\lambda_n) \subset (0,1) $, $(\beta_n),(\mu_n) \subset (0,+\infty)$, and $(e_n)$ and $(e'_n)$ are sequences of errors.
We denote by \eqref{HPPA2} the exact counterpart of the algorithm \eqref{HPPA2e}, i.e. without error terms, which is defined by
\begin{equation}\label{HPPA2}\tag{\textsf{MAR$^\star$}}
\begin{cases}
y_{2n+1}&=J_{\beta_n}^{\A} \left( \alpha_n u+(1-\alpha_n) y_{2n}\right)\\ 
y_{2n+2}&=J_{\mu_n}^{\B} \left( \lambda_n u+(1-\lambda_n) y_{2n+1}\right)
\end{cases}
\end{equation}
where $y_0=x_0$. 

Motivated by the success of the \emph{Halpern iterations} in fixed point theory \cite{Halpern67}, the \emph{Halpern-type proximal point algorithm} ($\mathsf{HPPA}$) 
\begin{equation}\label{HPPA} \tag{\textsf{HPPA}}
x_{n+1}=\alpha_n u+(1-\alpha_n)J_{\beta_n}(x_n)
\end{equation}
%
was considered as a way to upgrade the weak convergence of the proximal point algorithm to strong convergence (see e.g.\ \cite{BM(11),KT(00),X(02)}). The generalized method of alternating resolvents can be seen as form of Halpern-type proximal point algorithm generalized to two operators in an alternating fashion (cf. Section~\ref{s:connecting} -- see also \cite{LN(17)}). 

Boikanyo and Moro\c{s}ano showed in \cite{BM(13)} that \eqref{HPPA2e} is strongly convergent under the following mild assumptions. 
\begin{enumerate}[($C_1$)]
\item $\lim \alpha_n =0$ 
\item $\lim \lambda_n =0$
\item either $\sum_{n=0}^{\infty} \alpha_n = \infty$ or $\sum_{n=0}^{\infty} \lambda_n = \infty$
\item $(\beta_n)$ is bounded away from zero and such that $\lim \left( 1- \frac{\beta_{n+1}}{\beta_n}\right)=0 $
\item  $(\mu_n)$ is bounded away from zero and such that $ \lim \left( 1- \frac{\mu_{n+1}}{\mu_n}\right)=0.$
\end{enumerate} 
\begin{theorem}[{\cite[Theorem~3.2]{BM(13)}}]\label{t:BM_exact}
Let $\A:D(\A) \subset H \to 2^H$ and $\B:D(\B) \subset H \to 2^H$ be maximal monotone operators such that $S:= \A^{-1}(0) \cap \B^{-1}(0) \neq \emptyset$. For $x_0, u \in H $, let $(x_n)$ be generated by \eqref{HPPA2e}. Assume that $(C_1 )-(C_5)$ hold. If $\sum_{n=0}^{\infty} \norm{e_n} < \infty$ or $\sum_{n=0}^{\infty} \norm{e'_n} < \infty$, then $(x_n)$ converges strongly to the projection point $u$ onto $S$.
\end{theorem}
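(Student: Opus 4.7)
The plan is to follow the familiar Halpern-type blueprint: (i) bound $(x_n)$; (ii) reduce to the exact iteration \eqref{HPPA2}; (iii) prove asymptotic regularity $\norm{x_{n+1}-x_n}\to 0$, whence any cluster point lies in $S$; (iv) establish the variational inequality $\limsup_n \ip{u-P_Su,\, x_n-P_Su}\le 0$; and (v) close with a Xu-type recursive lemma applied to a suitable quadratic recursion.

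For (i) and (ii), pick $p\in S$, so that $p$ is a common fixed point of every resolvent $J^{\A}_{\beta_n}$ and $J^{\B}_{\mu_n}$. Nonexpansivity of the resolvents combined with the convex-combination structure gives
\[
\norm{x_{2n+1}-p}\le(1-\alpha_n)\norm{x_{2n}-p}+\alpha_n\norm{u-p}+\norm{e_n},
\]
and symmetrically for $\norm{x_{2n+2}-p}$, so summability of either error sequence yields a uniform bound on $\norm{x_n-p}$ via a routine induction. Running the same estimate against the exact iterate $y_n$ gives $\norm{x_n-y_n}\to 0$, so in what follows it suffices to analyse \eqref{HPPA2}. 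For (iii), the resolvent identity
\[
J^{\A}_{\beta_n}(z)=J^{\A}_{\beta_{n-1}}\!\left(\tfrac{\beta_{n-1}}{\beta_n}z+\bigl(1-\tfrac{\beta_{n-1}}{\beta_n}\bigr)J^{\A}_{\beta_n}(z)\right),
\]
its $\B$-counterpart, and $(C_1)$--$(C_5)$ allow a comparison of consecutive odd and even iterates; from the resulting recursion I obtain $\norm{x_{n+1}-x_n}\to 0$, and a further standard computation produces $\norm{x_n-J^{\A}_{\beta}x_n}\to 0$ and $\norm{x_n-J^{\B}_{\mu}x_n}\to 0$ for any fixed $\beta,\mu>0$, so every weak cluster point of $(x_n)$ already lies in $S$.

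The main obstacle is step (iv), and this is the point where the paper advertises a departure from the original argument. The textbook route selects a subsequence realising the $\limsup$, invokes sequential weak compactness, and uses demiclosedness of $\A$ and $\B$ to identify the weak limit with an element of $S$ so that the projection inequality $\ip{u-P_Su,\,q-P_Su}\le 0$ applies. To bypass weak compactness, I would couple the approximate fixed-point information from (iii) with a quantitative \emph{liminf-to-limsup} principle of the sort used in the proof-mining literature: since the inequality $\ip{u-P_Su,\,z-P_Su}\le 0$ already holds whenever $z$ is an exact common zero, and the resolvent values $J^{\A}_{\beta}x_n$, $J^{\B}_{\mu}x_n$ approach such zeros in an effective sense, one can derive the asymptotic inequality directly from approximate fixed-point data, with no recourse to weak subsequences. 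Plugging the outcome into a recursion of the shape
\[
\norm{x_{n+1}-P_Su}^2\le(1-\theta_n)\norm{x_n-P_Su}^2+\theta_n\gamma_n+\delta_n,
\]
with $\theta_n\in\{\alpha_n,\lambda_n\}$, $\limsup\gamma_n\le 0$, and $\sum\delta_n<\infty$, and invoking Xu's lemma then yields the strong convergence $x_n\to P_Su$; the proof-theoretic reading of the same steps is what will subsequently deliver the advertised rates of metastability.
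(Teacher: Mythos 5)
Your outline reproduces, step for step, the proof this paper actually implements (the theorem is only quoted here; its proof is the qualitative content of Section~\ref{s:3}, Section~\ref{s:Reduction} and the final remarks): boundedness via a common zero $p$, reduction to the exact iteration through $\norm{x_n-y_n}\to 0$, asymptotic regularity from the resolvent identity under $(C_1)$--$(C_5)$, a projection-type inequality, and the Boikanyo--Moro\c{s}anu two-parameter version of Xu's lemma applied to the combined recursion $\norm{y_{2n+2}-x}^2\le(1-\alpha_n)(1-\lambda_n)\norm{y_{2n}-x}^2+\alpha_nb_n+\lambda_nc_n+d_n$ (one needs this two-parameter form, or your interleaved choice of $\theta_n$, to exploit the disjunctive condition $(C_3)$). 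Steps (i)--(iii) and (v) are sound.

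The one clause that would fail if executed literally is your justification of the weak-compactness bypass in step (iv). It is not true that the resolvent values (equivalently, the approximate common fixed points produced in (iii)) ``approach exact common zeros in an effective sense'': absent a metric-regularity or error-bound hypothesis, small residuals $\norm{J^{\A}_{\beta}x-x}$ and $\norm{J^{\B}_{\mu}x-x}$ do not force $d(x,S)$ to be small in infinite dimensions --- for instance, with $\A=\B=\nabla f$ for $f(x)=\sum_n n^{-2}x_n^2$ on $\ell^2$, the unit basis vectors $e_N$ have resolvent residual $O(N^{-2})$ yet stay at distance $1$ from $S=\{0\}$. Hence one cannot transfer $\ip{u-P_Su,\,z-P_Su}\le 0$ from exact to approximate zeros by a perturbation argument. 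The mechanism the paper uses instead (Lemma~\ref{l:projection}, i.e.\ the macro of \cite{FFLLPP(19)}) changes the reference point rather than the quality of the approximants: for each tolerance and counterfunction it produces an \emph{approximate} common zero $x\in B_N$ satisfying $\ip{u-x,\,y-x}\le\varepsilon$ for all sufficiently good approximate common zeros $y\in B_N$, and the Xu recursion is then run against this $x$, whose defects $\norm{J^{\A}(x)-x}$, $\norm{J^{\B}(x)-x}$ enter the error term; the identification of the limit with $P_Su$ is a separate a~posteriori argument (final remarks). For the purely qualitative theorem your first, textbook route (weak subsequential limits plus demiclosedness) is of course available and closes the proof, so the statement is established either way; but the bypass cannot be justified by the clause you give.
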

 The same authors revisited this result, eliminating the conditions $(C_4)$ and $(C_5)$, in a follow up paper \cite[Theorem~8]{BM(12)}. This improvement results from a different strategy in the proof: it now relies on a discussion by cases depending on whether a certain auxiliary sequence is strictly increasing or not, and on a result due to Maingé \cite{M(08)}. The original results allow for several alternative conditions on the error terms. However, the proofs focus first on the exact iteration and consider the convergence with error terms only afterwards. In this regard, our analyses follow the same strategy. The particular condition on the error terms is not central in our analyses, nevertheless we comment on the other possibilities in Remark~\ref{r:errors}.

 Boikanyo and Moro\c{s}ano's results are convergence statements for a certain sequence $(x_n)$. For the purpose of a quantitative analysis it is better to look at the equivalent property of being a Cauchy sequence, i.e.\ 
 \begin{equation*}
\forall k \in \N \,\exists n \, \forall i,j \geq n  \left(\norm{x_i-x_j}\leq \frac{1}{k+1} \right).
\end{equation*}
In general it is not possible to obtain computable information on the value of $n$ (see e.g.\ \cite{N(15)} for details on this matter). Instead one turns to the equivalent finitary version of the Cauchy property, which has been called \emph{metastability}  \cite{T(08b),T(08a)}, i.e.\ 
 \begin{equation}\label{metastab}
\forall k \in \N \,\forall f : \N \to \N \,\exists n \, \forall i,j \in [n,n+f(n)] \left(\norm{x_i-x_j}\leq \frac{1}{k+1} \right).
\end{equation}
For the statement \eqref{metastab}, we obtain a highly uniform computable rate of metastability, i.e.\  
a computable functional $\mu: \N \times \N^{\N}\to \N $ such that
 \begin{equation}\label{metastabb}
\forall k \in \N \,\forall f : \N \to \N \,\exists n \leq \mu(k,f) \, \forall i,j \in [n,n+f(n)] \left(\norm{x_i-x_j}\leq \frac{1}{k+1} \right).
\end{equation}
Note that a computable rate for \eqref{metastab} does not entail computable information for the equivalent Cauchy property. Indeed, since the argument is by contradiction, this equivalence is non-effective and thus one cannot construct a bound for the Cauchy property from a rate of metastability. Nevertheless, results on metastability increase one's knowledge on the behaviour of the iteration and may allow for a deeper understanding of these types of algorithms. This idea has played a significant role in several recent results, see for example \cite{AGT(10),GT(08),KL(09),T(08b)}. 
 
The methods used in this paper are set in the framework of proof mining \cite{K(08),K(18)}, a program that describes the process of using proof-theoretical techniques to analyse mathematical proofs with the aim of extracting new information. That being said, our results and proofs do not presuppose any particular knowledge of logical tools because the latter are only used as an intermediate step and are not visible in the final product.

Apart from obtaining effective quantitative information and similarly to \cite{DP(21),DP(20), DP(ta),PP(ta)}, applying a technique developed in \cite{FFLLPP(19)}, our results are established without the sequential weak compactness arguments required in the original proofs and only rely on a weak form of projection. This fact is reflected in the complexity of the bounds obtained, which are primitive recursive (in the sense of Kleene). For other proof mining results on the convergence of algorithms based on the proximal point algorithm see e.g.\ \cite{Koh(ta),Koh(ta2),LLPP(ta),LS(18)}.

The structure of the paper is the following. In Section~\ref{sectionPrelim} we recall the relevant terminology as well as some well-known results from the theory of monotone operators in Hilbert spaces. We also present some lemmas necessary for our analysis. In Section~\ref{s:3} we obtain an effective metastability bound on the algorithm \eqref{HPPA2} under appropriate quantitative versions of conditions $(C_1) - (C_5)$. We also establish a connection between \eqref{HPPA2} and a generalization of \eqref{HPPA} for two operators.  In Section~\ref{s:second} a rate of metastability for \eqref{HPPA2} is obtained without the conditions $(C_4)$ and $(C_5)$. Section~\ref{s:Reduction} extends the results  from Sections~\ref{s:3} and \ref{s:second} to the iteration \eqref{HPPA2e}. Some final remarks are left  to the last section.

\section{Preliminaries}\label{sectionPrelim}

\subsection{Monotone Operators and resolvent functions}
Throughout we let $H$ be a real Hilbert space with inner product $\ip{\cdot, \cdot}$ and norm $\norm{\cdot}$.
\begin{definition}
A mapping $T : H \to H$ is called \emph{nonexpansive} if 
$$\forall x, y \in H \left(\norm{T (x) -T (y)}\leq \norm{x -y}\right),$$ 
and \emph{firmly nonexpansive} if
$$\forall x, y \in H\left( \norm{T(x)-T(y)}^2 \leq \norm{x-y}^2-\norm{(\Id -T)(x)- (\Id-T)(y)}^2 \right).$$
\end{definition}
Note that any firmly nonexpansive mapping is also nonexpansive. If $T$ is nonexpansive, then the set of its fixed points $\{x \in H: T(x)=x\}$ is a closed and convex subset of $H$. 
 We recall that an operator $\A:H \to 2^{H}$  is said to be \emph{monotone} if and only if whenever $(x,y)$ and $(x',y')$ are elements of the graph of $\A$, it holds that $\ip{ x-x',y-y'} \geq 0$. A monotone operator $\A$ is said to be \emph{maximal monotone} if  the graph of $\A$ is not properly contained in the graph of any other monotone operator on $H$. For every positive real number $\gamma$, we use $J^\A_\gamma$  to denote the single-valued \emph{resolvent function} of $\A$  defined by $J^\A_\gamma = (I + \gamma\A )^{-1}.$ The resolvent functions are firmly nonexpansive and their fixed points coincide with the zeros of the operator.

The following lemmas are well-known.
\begin{lemma}[Resolvent identity]
For $a,b>0$, the following identity holds for every $x \in H$
\begin{equation*}
J^\A_a (x) = J^\A_b\left(\frac{b}{a}x+ \left(1-\frac{b}{a} \right)J^\A_a (x)\right).
\end{equation*}
\end{lemma}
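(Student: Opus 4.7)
The plan is to exploit the defining property of the resolvent, $J^\A_\gamma = (I+\gamma\A)^{-1}$, rewritten as the equivalence
\begin{equation*}
y = J^\A_\gamma(x) \iff \frac{x-y}{\gamma} \in \A(y).
\end{equation*}
This is the standard reformulation when $\A$ is set-valued and maximal monotone, and it is the only fact about $\A$ I will need.

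First I would set $y := J^\A_a(x)$. By the equivalence above this is equivalent to $(x-y)/a \in \A(y)$. Then I would compute the argument of the outer resolvent on the right-hand side,
\begin{equation*}
z := \frac{b}{a}\,x + \Bigl(1 - \frac{b}{a}\Bigr)\,J^\A_a(x) = \frac{b}{a}\,x + \Bigl(1 - \frac{b}{a}\Bigr)\,y,
\end{equation*}
and observe, by a one-line algebraic manipulation, that $z - y = \frac{b}{a}(x-y)$. Dividing by $b$ gives $(z-y)/b = (x-y)/a$, and this element already lies in $\A(y)$ by the first step.

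Finally, the inclusion $(z-y)/b \in \A(y)$ is, again by the defining equivalence, exactly the statement $y = J^\A_b(z)$. Substituting back the expression for $z$ produces the claimed identity.

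There is really no serious obstacle: the proof is a direct algebraic manipulation combined with one application of the definition of the resolvent in each direction. The only point requiring a bit of care is that $\A$ is set-valued, so the intermediate relations should be written as memberships in $\A(y)$ rather than equalities; single-valuedness of $J^\A_a$ and $J^\A_b$ (noted just before the lemma) ensures the resulting identity between points in $H$ is unambiguous.
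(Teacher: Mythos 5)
Your proof is correct and is the standard argument for the resolvent identity; the paper states this lemma as well known and omits the proof entirely, so there is nothing to compare against beyond noting that your computation $z-y=\tfrac{b}{a}(x-y)$ together with the equivalence $y=J^\A_\gamma(x)\iff (x-y)/\gamma\in\A(y)$ is exactly the canonical derivation.
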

\begin{lemma}[\cite{MX(04)}]\label{lemmaresolvineq}
If $0<a\leq b$, then $\norm{J^\A_a (x)-x} \leq 2 \norm{J^\A_b (x)-x}$, for all $x \in H$.
\end{lemma}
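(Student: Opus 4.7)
The plan is to reduce the claim to a triangle-inequality estimate by using the resolvent identity stated just above to express $J^{\A}_b(x)$ as the image under $J^{\A}_a$ of a point close to $x$; the factor $2$ will then appear automatically.

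First, I would apply the resolvent identity with the roles of $a$ and $b$ exchanged: this gives $J^{\A}_b(x) = J^{\A}_a(y)$ with $y := (a/b)\,x + (1 - a/b)\,J^{\A}_b(x)$. A short calculation yields $x - y = (1 - a/b)\bigl(x - J^{\A}_b(x)\bigr)$, and since $0 < a \le b$ the coefficient $1 - a/b$ lies in $[0,1)$, so in particular $\norm{x - y} \le \norm{x - J^{\A}_b(x)}$.

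Next, I would exploit that the resolvent $J^{\A}_a$ is (firmly) nonexpansive, obtaining
\[
\norm{J^{\A}_a(x) - J^{\A}_b(x)} = \norm{J^{\A}_a(x) - J^{\A}_a(y)} \le \norm{x - y} = \left(1 - \tfrac{a}{b}\right)\norm{x - J^{\A}_b(x)}.
\]
The claim then follows from a single triangle inequality:
\[
\norm{J^{\A}_a(x) - x} \le \norm{J^{\A}_a(x) - J^{\A}_b(x)} + \norm{J^{\A}_b(x) - x} \le \left(2 - \tfrac{a}{b}\right)\norm{J^{\A}_b(x) - x} \le 2\norm{J^{\A}_b(x) - x}.
\]

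I do not anticipate any substantial obstacle. The only non-mechanical step is the decision to invoke the resolvent identity in the swapped orientation: applied exactly as written, it would instead produce the coefficient $1 - b/a$, whose magnitude $b/a - 1$ is unbounded as $b$ grows, and the bootstrap would not close. Using the swapped form keeps the coefficient in $[0,1]$, and the factor $2$ then drops out cleanly from the triangle inequality.
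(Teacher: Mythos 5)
Your argument is correct and is exactly the standard proof of this lemma (the paper itself states it without proof, citing \cite{MX(04)}, where essentially this argument appears): apply the resolvent identity in the swapped orientation to write $J^\A_b(x)=J^\A_a(y)$ with $\norm{x-y}=(1-\tfrac{a}{b})\norm{x-J^\A_b(x)}$, then use nonexpansiveness of $J^\A_a$ and the triangle inequality. Your closing remark about why the swapped orientation is essential (to keep the coefficient in $[0,1]$) is also the right observation.
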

\begin{lemma}[\cite{GK(90)}]\label{l:fnin}
A mapping $T$ is firmly nonexpansive if and only if the mapping $2T-\Id$ is nonexpansive. 
\end{lemma}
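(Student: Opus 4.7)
The plan is to reduce both sides of the equivalence to the same inner-product inequality, namely
\[
\langle T(x)-T(y),\, x-y\rangle \;\geq\; \norm{T(x)-T(y)}^2 \qquad \text{for all } x,y\in H,
\]
which is the standard ``one-line'' characterization of firm nonexpansiveness.

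First I would handle the direction from firm nonexpansiveness. Setting $a := T(x)-T(y)$ and $b := (\Id-T)(x)-(\Id-T)(y)$, one has $a+b = x-y$, so expanding $\norm{a+b}^2 = \norm{a}^2 + 2\langle a,b\rangle + \norm{b}^2$ converts the defining inequality $\norm{a}^2 + \norm{b}^2 \leq \norm{x-y}^2$ into $\langle a,b\rangle \geq 0$, which after substituting $b = (x-y)-a$ is precisely the displayed inner-product inequality.

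Next I would expand the nonexpansiveness of $2T-\Id$. For arbitrary $x,y \in H$,
\[
\norm{(2T-\Id)(x)-(2T-\Id)(y)}^2 \;=\; \norm{2(T(x)-T(y)) - (x-y)}^2,
\]
and developing the right-hand side via the inner product yields
\[
4\norm{T(x)-T(y)}^2 - 4\langle T(x)-T(y),\, x-y\rangle + \norm{x-y}^2.
\]
Thus $2T-\Id$ is nonexpansive iff $4\norm{T(x)-T(y)}^2 \leq 4\langle T(x)-T(y), x-y\rangle$, i.e.\ exactly the same inequality obtained in the previous paragraph.

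Since both conditions are logically equivalent to the single inner-product inequality, the equivalence follows. There is no real obstacle here: the argument is purely algebraic and relies only on the expansion $\norm{u+v}^2 = \norm{u}^2 + 2\langle u,v\rangle + \norm{v}^2$, with the only mild care needed being the bookkeeping of the substitution $b = (x-y)-a$ so that firm nonexpansiveness and nonexpansiveness of $2T-\Id$ are compared in the same form.
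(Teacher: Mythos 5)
Your argument is correct and complete: both firm nonexpansiveness and nonexpansiveness of $2T-\Id$ reduce, by the same polarization identity, to the inequality $\ip{T(x)-T(y),\,x-y}\geq \norm{T(x)-T(y)}^2$, and the algebra checks out. The paper itself gives no proof (it cites \cite{GK(90)}), and your reduction is precisely the standard one found there.
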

For a comprehensive introduction to convex analysis and the theory of monotone operators in Hilbert spaces we refer to \cite{BC(17)}.

We fix $\A,\B$ maximal monotone operators on $H$ and assume henceforth the set $S:=\A^{-1}(0) \cap \B^{-1}(0)$ of the common \emph{zeros} of $\A$ and $\B$ to be nonempty.

\subsection{Quantitative notions}

Consider the strong majorizability relation $\leq^{\ast}$ from \cite{bezem1985strongly} for functions $f,g:\N\to\N$
$$g\leq^* f := \forall n, m\in\N\,\left(m\leq n\to \left( g(m)\leq f(n) \land f(m)\leq f(n)\right)\right).$$
If $f\leq^* f$ we say that $f$ is \emph{monotone}. This corresponds to saying that $f$ is a  nondecreasing function. Universal quantifications over monotone functions are denoted $\mforall f \, (\dots)$. A functional $\varphi:\N\times \N^{\N}\to\N$ is \emph{monotone} if for all $m,n\in\N$ and all $f,g:\N\to\N$,
$$\left(m\leq n \land g\leq^* f\right) \to \left(\varphi(m,g)\leq \varphi(n,f)\right).$$
Functions depending on several variables (ranging over $\N$ or $\N^\N$), and taking values in $\N$, are said to be monotone if they are monotone in all the variables.
By the theoretical results underlying the extractions in this paper we always obtain bounds given by monotone functions.
\begin{definition}\label{d:RN}
Let $(a_n)$ be a sequence of real numbers. 
\begin{enumerate}[$(i)$]
\item A \emph{rate of convergence} for 
$a_n \to 0$ is a function $\gamma:\N\to\N$ such that
\[\forall k\in\N\, \forall n\geq \gamma(k)\,\left(|a_n|\leq \frac1{k+1}\right).\]
\item A \emph{quasi-rate of convergence} for $a_n \to 0$ is a functional $\Gamma:\N \times \N^{\N}\to\N$ such that
\[\forall k\in\N\, \forall f:\N \to \N \, \exists n\leq \Gamma(k,f)\,\forall m \in [n,f(n)] \left(|a_m|\leq \frac1{k+1}\right).\]
\item A \emph{rate for $\limsup a_n \leq 0$} is a function $\gamma:\N\to\N$ such that
\[\forall k\in\N\, \forall n\geq \gamma(k)\,\left(a_n\leq \frac1{k+1}\right).\]
\item A \emph{rate of divergence} for $\sum a_n = \infty$ is a function $\gamma:\N\to\N$ such that
\[\forall k,n \in\N \left(\sum_{i=0}^{\gamma(k)+n} a_i \geq k\right).\]
\item A \emph{Cauchy rate} for $\sum a_n < \infty$ is a function $\gamma:\N\to\N$ such that
\[\forall k,n \in\N \left(\sum_{i=\gamma(k)+1}^{\gamma(k)+n} a_i \leq \frac{1}{k+1}\right).\]
\end{enumerate}
\end{definition}
\begin{definition}\label{d:HS}
Let $(x_n)$ be a sequence in $H$ and $x \in H$.
\begin{enumerate}[$(i)$]
\item A \emph{rate of convergence} for $x_n \to x$ is a rate of convergence for $\norm{x_n-x} \to 0$. 
\item A \emph{quasi-rate of convergence} for $x_n \to x$ is a quasi-rate of convergence  for $\norm{x_n-x} \to 0$. 
\item A \emph{Cauchy rate} for $(x_n)$ is a function $\gamma:\N\to\N$ such that
\[\forall k \in\N \, \forall i,j \geq \gamma(k)\left(\norm{x_i-x_j}\leq \frac{1}{k+1}\right).\]
\item A \emph{rate of metastability} is a functional $\Gamma:\N \times \N^{\N}\to\N$ such that
 \begin{equation*}
\forall k \in \N \,\forall f : \N \to \N \,\exists n \leq \Gamma(k,f) \, \forall i,j \in [n,f(n)] \left(\norm{x_i-x_j}\leq \frac{1}{k+1} \right).
\end{equation*}
\end{enumerate}
\end{definition}
\begin{definition}
Let $(x_n)$ be a sequence in $H$ and consider mappings $T,T': H \to H$. 
\begin{enumerate}[$(i)$]
\item The sequence $(x_n)$ is \emph{asymptotically regular} w.r.t. $T$ if $\norm{T(x_{n})-x_n} \to 0$. A \emph{(quasi-)rate of asymptotic regularity} for $(x_n)$ w.r.t.\ $T$  is a (quasi-)rate of convergence for $\norm{T(x_{n})-x_n} \to 0$.
\item A \emph{(quasi-)rate of asymptotic regularity} for $(x_n)$ w.r.t. $T$ and $T'$  is a (quasi-)rate of convergence for $\max\{\norm{T(x_{n})-x_n}, \norm{T'(x_{n})-x_n}\} \to 0$.
\end{enumerate}
\end{definition}
\begin{remark}\label{r:maj}
~
\begin{enumerate}[$(i)$]
\item The definition of quasi-rate of asymptotic regularity w.r.t.\ $T$ and $T'$ entails that 
\begin{equation*}
\forall k \in \N \, \forall f:\N \to \N \, \exists n \leq \Gamma(k,f)\, \forall m \in [n,f(n)] \left(\norm{T(x_{m})-x_m} \leq \frac{1}{k+1} \wedge \norm{T'(x_{m})-x_m} \leq \frac{1}{k+1} \right).
\end{equation*}
\item For the sake of simplicity we use the interval $[n,f(n)]$ when talking about metastability, instead of the usual notion \eqref{metastab}. One can recover the usual form by considering the function  $n \mapsto n+f(n)$.
\item Whenever we write $\mforall f \,(\dots)$ we restrict our arguments to monotone functions in $\N^\N$. There is no loss in generality in doing so, as for a function $f: \N \to \N$ one has $f \leq^* \! f^{\mathrm{maj}}$, where $f^{\mathrm{maj}}$ is the monotone function defined by $f^{\mathrm{maj}}(n):= \max\{f(i)\, :\, i \leq n\}$. In this way, we avoid constantly having to switch from $f$ to  $f^{\mathrm{maj}}$, and simplify the notation. For example, one may assume that a rate of convergence is monotone.
\end{enumerate}
\end{remark}
\begin{notation}\label{n:ftilde}
\begin{enumerate}
 \item Given $M \in \N$ and $f:\N \to \N$, we denote by $g_f$ the function defined by $g_f(m):=f(2m+1)$ and by $\widetilde{f}[M]$ the function defined by $\widetilde{f}[M](m):=f(\max\{m,M\})$.
\item Consider a function $\varphi$ on tuples of variables $\bar{x}$, $\bar{y}$. If we wish to consider the variables $\bar{x}$ as parameters we write $\varphi[\bar{x}](\bar{y})$. For simplicity of notation we may then even omit the parameters and simply write $\varphi(\bar{y})$.
\end{enumerate}
\end{notation}
The next result concerns the conjunction of two rates of metastability in an abstract way.
\begin{proposition}\label{p:metameta}
	Let $X$ be a set, $A(k,m,x)$ and $B(k,m,x)$ be formulas with parameters $k,m,x$,  and $\phi_1,\phi_2$ be monotone functions satisfying 
	\begin{itemize}
		\item[(i)] $\forall k \in \N\, \mforall f\in \N^\N \,\exists n\leq \phi_1(k,f)\, \exists x \in X\,\forall m\in[n,f(n)]\, A(k,m,x)$ 
		\item[(ii)] $\forall k \in \N\, \mforall f\in \N^\N\, \exists n\leq \phi_2(k,f)\, \exists x \in X\,\forall m\in[n,f(n)]\, B(k,m,x)$.
	\end{itemize}
	Then
	\[\forall k\in \N\, \mforall f \in \N^\N \,\exists n\leq \overline{\Phi}(k,f)\, \exists x,x' \in X\,  \forall m\in[n,f(n)]\, \left(A(k,m,x) \wedge B(k,m,x')\right),\]
	where $\overline{\Phi}(k,f):=\overline{\Phi}[\phi_1,\phi_2](k,f):=\max\{\theta,\phi_2(k,\widetilde{f}[\theta])\}$, with 
\begin{enumerate}
\item[] $\overline{f}(m):=\widetilde{f}[\phi_2(k,\widetilde{f}[m])](m)$
\item[] $\theta:=\phi_1(k,\overline{f})$  
\end{enumerate}
\end{proposition}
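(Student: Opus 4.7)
The plan is to apply hypothesis (i) to a carefully chosen monotone function that reserves enough room inside the resulting metastable interval for a subsequent application of (ii); the function $\overline{f}$ appearing in the statement is designed exactly for this purpose, since evaluating $\overline{f}$ at $m$ gives $f$ applied to the bound that a later use of (ii) starting at $m$ would produce.

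Concretely, I would first fix $k \in \N$ and a monotone $f \in \N^\N$, and note that by monotonicity of $\phi_2$ and $f$ the function $\overline{f}$ is monotone. Applying (i) to $k$ and $\overline{f}$ yields $n_1 \leq \phi_1(k,\overline{f}) = \theta$ and some $x \in X$ with $A(k,m,x)$ for all $m \in [n_1, \overline{f}(n_1)]$. Then I would apply (ii) to $k$ and the monotone function $\widetilde{f}[n_1]$, obtaining $n_2 \leq \phi_2(k,\widetilde{f}[n_1])$ and some $x' \in X$ with $B(k,m,x')$ for all $m \in [n_2, \widetilde{f}[n_1](n_2)] = [n_2, f(\max\{n_2,n_1\})]$.

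I would finish by setting $n := \max\{n_1,n_2\}$. Since $n_1 \leq \theta$ we have $\widetilde{f}[n_1] \leq^{\ast} \widetilde{f}[\theta]$, so monotonicity of $\phi_2$ gives $n_2 \leq \phi_2(k,\widetilde{f}[\theta])$, and thus $n \leq \max\{\theta, \phi_2(k,\widetilde{f}[\theta])\} = \overline{\Phi}(k,f)$. Monotonicity of $f$ then yields
\[
f(n) = f(\max\{n_1,n_2\}) \leq f(\max\{n_1, \phi_2(k,\widetilde{f}[n_1])\}) = \overline{f}(n_1),
\]
so $[n,f(n)] \subseteq [n_1, \overline{f}(n_1)]$ and $A(k,m,x)$ holds on this interval. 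Likewise, $n \geq n_2$ and $f(n) = f(\max\{n_1,n_2\}) = \widetilde{f}[n_1](n_2)$ give $[n,f(n)] \subseteq [n_2, \widetilde{f}[n_1](n_2)]$, so $B(k,m,x')$ also holds there.

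There is no real obstacle beyond bookkeeping; the only subtle point is recognising in advance that $\overline{f}$ must be defined so that $\overline{f}(n_1)$ already dominates the right endpoint produced by the second application. Once $\overline{f}$ is in place, the verification is a matter of using the monotonicity of $\phi_2$ and $f$, together with the fact that (following Remark 2.3(iii)) one may restrict all quantifications to monotone functions.
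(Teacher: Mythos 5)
Your proposal is correct and follows essentially the same route as the paper's proof: apply (i) to $\overline{f}$, then (ii) to $\widetilde{f}[n_1]$, take $n=\max\{n_1,n_2\}$, and use the monotonicity of $f$ and $\phi_2$ to verify the interval inclusions and the bound $n\leq\overline{\Phi}(k,f)$. The only difference is that you spell out the majorization step $\widetilde{f}[n_1]\leq^{\ast}\widetilde{f}[\theta]$, which the paper leaves implicit.
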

\begin{proof}
	Let $k\in\N$ and monotone $f:\N\to\N$ be given. By $(i)$ applied to $k$ and $\overline{f}$, there exist $n_1\leq \phi_1(k,\overline{f})$  and $x_1 \in X$ such that $\forall m\in [n_1,\overline{f}(n_1)]\, A(k,m,x_1)$.
By $(ii)$ applied to $k$ and $\widetilde{f}[n_1]$, there exist $n_2\leq \phi_2(k,\widetilde{f}[n_1])$ and $x_2 \in X$ satisfying $\forall m\in[n_2,\widetilde{f}[n_1](n_2)]  B(k,m,x_2)$.
	We will now check that $n:=\max\{n_1,n_2\}$ satisfies the desired conclusion.
	Observe that since $\phi_2$ is monotone, $n\leq \overline{\Phi}(k,f)$.
	By the definition of $n$ and the monotonicity of the functions $f$ and $\phi_2$, we have
	\[\begin{split}
		&[n,f(n)]\subseteq [n_1,f(\max\{n_1,\phi_2(k,\widetilde{f}[n_1])\})]=[n_1,\overline{f}(n_1)]\text{ and }\\
		&[n,f(n)]\subseteq [n_2,f(n)]=[n_2,\widetilde{f}[n_1](n_2)]\end{split}\] 
and the result follows.
\end{proof}
\begin{remark}\label{r:metameta}
The fact that one could change the order of $(i)$ and $(ii)$ in Proposition~\ref{p:metameta} entails that a better bound is the minimum of those two possibilities. Indeed, one can take $$\Phi[\phi_1,\phi_2](k,f):=\min\{\overline{\Phi}[\phi_1,\phi_2](k,f), \overline{\Phi}[\phi_2,\phi_1](k,f)\}.$$ 
\end{remark}
We finish this subsection with a general result that allows for discussions by cases in a even/odd distinction.
\begin{proposition}\label{p:metameta2}
	Let $X$ be a set, $A(k,m,x)$ be a formula with parameters $k,m,x$,  and $\psi$ be a monotone function satisfying
\[\forall k\in \N\, \mforall f \in \N \to \N \,\exists n\leq \psi(k,f)\, \exists x \in X\,  \forall m\in[n,f(n)]\, \left(A(k,2m,x) \wedge A(k,2m+1,x)\right).\]
Then \[\forall k\in \N\, \mforall f \in \N \to \N \,\exists n\leq \Psi(k,f)\, \exists x \in X\,  \forall m\in[n,f(n)]\, A(k,m,x),\] where 
$\Psi(k,f):=\Psi[\psi](k,f):=2\psi(k,g_f)+1$.
\end{proposition}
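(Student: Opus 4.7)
The plan is to invoke the hypothesis with the rescaled function $g_f(m)=f(2m+1)$, which is monotone whenever $f$ is. Given $k\in\N$ and a monotone $f$, applying the hypothesis at $(k,g_f)$ produces an index $n_0\leq \psi(k,g_f)$ and an element $x\in X$ such that
\[\forall m'\in[n_0,g_f(n_0)]\,\bigl(A(k,2m',x)\wedge A(k,2m'+1,x)\bigr).\]
I would then set $n:=2n_0$; by construction $n\leq 2\psi(k,g_f)\leq \Psi(k,f)$, so only the conclusion on the interval $[n,f(n)]$ remains to be checked.

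For the verification, take any $m\in[2n_0,f(2n_0)]$ and split on parity, writing $m=2m'$ or $m=2m'+1$. In either case $m'\geq n_0$ (since $m\geq 2n_0$), and
\[m'\leq m\leq f(2n_0)\leq f(2n_0+1)=g_f(n_0)\]
by monotonicity of $f$, so $m'\in[n_0,g_f(n_0)]$. The hypothesis then yields $A(k,2m',x)\wedge A(k,2m'+1,x)$, and the conjunct matching the parity of $m$ gives $A(k,m,x)$.

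The argument is essentially bookkeeping: the function $g_f$ is chosen so that the pair of witnesses $2m'$ and $2m'+1$ obtained from the hypothesis sweep out every index in the doubled interval $[2n_0,2g_f(n_0)+1]$, and monotonicity of $f$ ensures that this doubled interval contains $[n,f(n)]$. The only points that require care are the parity split and the use of monotonicity of $f$ to upgrade $[n_0,g_f(n_0)]$ to $[2n_0,f(2n_0)]$; I do not foresee any genuine obstacle, and the slack of the $+1$ in $\Psi(k,f)=2\psi(k,g_f)+1$ comfortably absorbs the ambiguity in whether one starts at an even or odd index.
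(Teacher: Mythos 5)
Your proof is correct and follows essentially the same route as the paper: apply the hypothesis to $g_f$, then split on parity. The only (harmless) difference is that you take $n=2n_0$ where the paper takes $n=2n_0+1$, which costs you one extra appeal to the monotonicity of $f$ (to get $f(2n_0)\leq f(2n_0+1)=g_f(n_0)$) but still fits under the bound $\Psi(k,f)=2\psi(k,g_f)+1$.
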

\begin{proof}
Let $k\in\N$ and monotone $f:\N\to\N$ be given. By the assumption there exist $n_0\leq \psi(k,g_f)$ and  $x_0 \in X$ such that  
\begin{equation}\label{e:parity}
\forall m\in[n_0,g_f(n_0)]\, \left(A(k,2m,x_0) \wedge A(k,2m+1,x_0)\right).
\end{equation}
By taking $n:=2n_0+1$, the result follows from \eqref{e:parity} considering the cases $m=2m'$ and $m=2m'+1$, for $m \in [n,f(n)]=[2n_0+1,g_f(n_0)]$, as in both cases $m' \in [n_0,g_f(n_0)]$. 
\end{proof}

\subsection{Quantitative lemmas}

In this section we present some useful technical lemmas. The following result is due to Boikanyo and Moro\c{s}ano, and generalizes a result by Xu \cite[Lemma~2.5]{X(02)}. 
\begin{lemma}[{\cite[Lemma~2.4]{BM(13)}}]\label{LemmaXuGen}
Let $(s_n)$ be a sequence of nonnegative real numbers satisfying $$s_{n+1}\leq (1-\alpha_n)(1-\lambda_n)s_n+ \alpha_nb_n + \lambda_n c_n+d_n,$$
where the sequences $(\alpha_n),(\lambda_n) \subset (0,1)$, $(b_n),(c_n)\subset \R$, and $(d_n) \subset \R^+_0 $ are such that:  $(i)$ $\sum \alpha_n = \infty$ $($or equivalently $\prod (1-\alpha_n)=0)$; $(ii)$ $\limsup b_n \leq 0$; $(iii)$ $\limsup c_n \leq 0$; and $(iv)$ $\sum d_n < \infty$.
Then $\lim s_n = 0$.
\end{lemma}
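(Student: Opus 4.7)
The plan is to reduce the statement to the original lemma of Xu \cite{X(02)} which handles recursions of the form $s_{n+1}\leq (1-\alpha_n)s_n+\alpha_n\beta_n+\gamma_n$ under the familiar Halpern-type assumptions. The natural idea is to absorb the product $(1-\alpha_n)(1-\lambda_n)$ into a single coefficient $1-\tau_n$.

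First I would set
\[\tau_n := 1-(1-\alpha_n)(1-\lambda_n)=\alpha_n+\lambda_n-\alpha_n\lambda_n,\]
so that $\tau_n\in(0,1)$ and the recursion becomes
\[s_{n+1}\leq (1-\tau_n)s_n+\tau_n\beta_n+d_n,\qquad \beta_n:=\frac{\alpha_n b_n+\lambda_n c_n}{\tau_n}.\]
Next I would check the three hypotheses of Xu's lemma for this rewritten recursion. The divergence $\sum\tau_n=\infty$ is immediate from $\tau_n\geq\alpha_n$ (since $\lambda_n(1-\alpha_n)\geq 0$) together with hypothesis $(i)$. The Cauchy condition $\sum d_n<\infty$ is already hypothesis $(iv)$.

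The only point requiring a small argument is $\limsup\beta_n\leq 0$. Given $\varepsilon>0$, by $(ii)$ and $(iii)$ one has $b_n\leq\varepsilon$ and $c_n\leq\varepsilon$ eventually, hence $\alpha_n b_n+\lambda_n c_n\leq\varepsilon(\alpha_n+\lambda_n)$. Using the elementary estimate $\alpha_n\lambda_n\leq\min(\alpha_n,\lambda_n)\leq(\alpha_n+\lambda_n)/2$, which gives $\tau_n\geq(\alpha_n+\lambda_n)/2$, one obtains
\[\beta_n\leq \varepsilon\cdot\frac{\alpha_n+\lambda_n}{\tau_n}\leq 2\varepsilon,\]
and letting $\varepsilon\to 0$ yields $\limsup\beta_n\leq 0$.

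With all three hypotheses verified, Xu's lemma yields $\lim s_n=0$. I do not anticipate any real obstacle: the only subtle point is the limsup estimate, and the key algebraic observation is the uniform bound $(\alpha_n+\lambda_n)/\tau_n\leq 2$, which prevents the normalisation by $\tau_n$ from destroying the convergence of $b_n$ and $c_n$.
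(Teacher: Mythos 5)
The paper does not prove this lemma at all: it is imported verbatim as \cite[Lemma~2.4]{BM(13)}, with only the remark that it ``generalizes a result by Xu''. Your proposal supplies exactly the missing reduction, and it is correct and complete. The substitution $\tau_n=1-(1-\alpha_n)(1-\lambda_n)=\alpha_n+\lambda_n(1-\alpha_n)$ gives $\tau_n\in(0,1)$ and $\tau_n\geq\alpha_n$, so $\sum\tau_n=\infty$ follows from $(i)$ (and symmetrically from $\sum\lambda_n=\infty$ if one works with the alternative form of the divergence hypothesis, as condition $(C_3)$ of the paper suggests); the hypothesis $\sum d_n<\infty$ transfers unchanged; and your key estimate $\tau_n\geq(\alpha_n+\lambda_n)/2$, equivalent to $2\alpha_n\lambda_n\leq\alpha_n+\lambda_n$, which holds since $\alpha_n\lambda_n\leq\min(\alpha_n,\lambda_n)$, does indeed yield $\limsup\beta_n\leq 0$ for $\beta_n=(\alpha_nb_n+\lambda_nc_n)/\tau_n$. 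Xu's lemma then applies. The only caveat worth recording is that the eventual bounds $b_n\leq\varepsilon$, $c_n\leq\varepsilon$ must be multiplied by the \emph{positive} weights $\alpha_n,\lambda_n$ before summing, which you do; a sign error here is the classical trap. Your uniform bound $(\alpha_n+\lambda_n)/\tau_n\leq 2$ is also precisely what resurfaces, in quantitative form, in the paper's Lemmas~\ref{l:Xuquant3} and~\ref{l:Xuquant1}, where the factors $4(k+1)$ in the hypotheses on $b_m,c_m$ absorb exactly this loss; so your qualitative argument is consistent with the quantitative versions the paper actually uses.
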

Adapting  \cite[Proposition~3.4]{LLPP(ta)} and  \cite[Lemma~14]{PP(ta)} we obtain Lemmas~\ref{l:Xuquant3} and \ref{l:Xuquant1} below which are quantitative versions of Lemma~\ref{LemmaXuGen}. 
\begin{lemma}\label{l:Xuquant3}
	Let $(s_n)$ be a bounded sequence of nonnegative real numbers and $M\in\N$ be a positive upper bound on $(s_n)$. Consider sequences of real numbers $(\alpha_n),(\lambda_n)\subset\, [0,1]$, $(b_n),(c_n)\subset \R$ and $(d_n)\subset \R^+_0$. Assume that $\sum \alpha_n = \infty$ (or $\sum \lambda_n= \infty$),  with rate of divergence ${\rm A}$, $\limsup b_n \leq 0$ and  $\limsup c_n \leq 0$, with rates ${\rm B}$ and ${\rm C}$ respectively, and $\sum d_n < \infty$ with Cauchy rate ${ \rm D}$. Assume that, for all $m \in \N$
	\[s_{m+1}\leq (1-\alpha_m)(1-\lambda_m)s_m+ \alpha_mb_m + \lambda_m c_m+d_m.\]
	Then $s_n \to 0$ with rate of convergence  
 $\rho_1(k):= \rho_1[{\rm A},{\rm B},{\rm C},{\rm D},M](k):={\rm A}\left(\widetilde{n}+\lceil \ln(4M(k+1))\rceil\right)+1$,
where $\widetilde{n}:= \max\{{\rm B}(4k+3), {\rm C}(4k+3),{\rm D}(4k+3)+1\}.$ 
\end{lemma}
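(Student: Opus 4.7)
The plan is to quantify the classical telescoping argument behind Xu's lemma, adapted to the double-weight recursion involving both $(\alpha_m)$ and $(\lambda_m)$. By the symmetric roles played by the pairs $(\alpha_m,b_m)$ and $(\lambda_m,c_m)$, I would first reduce to the case $\sum \alpha_n = \infty$. Setting $\epsilon := 1/(4(k+1))$, the choice of $\widetilde{n} = \max\{{\rm B}(4k+3), {\rm C}(4k+3), {\rm D}(4k+3)+1\}$ guarantees that for every $m \geq \widetilde{n}$ one has $b_m \leq \epsilon$ and $c_m \leq \epsilon$ (from the $\limsup$-rates), while shifting the Cauchy rate $\mathrm{D}$ yields $\sum_{i=\widetilde{n}}^{N-1} d_i \leq \epsilon$ for every $N > \widetilde{n}$.

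The key algebraic observation I would exploit is the elementary inequality
\[
\alpha_m + \lambda_m \leq 2\bigl(1-(1-\alpha_m)(1-\lambda_m)\bigr),
\]
which follows from $\alpha_m(1-\lambda_m)+\lambda_m(1-\alpha_m)\geq 0$. Writing $\rho_m := (1-\alpha_m)(1-\lambda_m)$, the hypothesis then gives, for each $m \geq \widetilde{n}$,
\[
s_{m+1} \leq \rho_m s_m + \alpha_m b_m + \lambda_m c_m + d_m \leq \rho_m s_m + 2\epsilon(1-\rho_m) + d_m.
\]
The shift $t_m := s_m - 2\epsilon$ absorbs the affine term, leaving the clean recursion $t_{m+1} \leq \rho_m t_m + d_m$, which is now in the familiar form amenable to telescoping.

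Iterating from $\widetilde{n}$ up to $N-1$ and using $\rho_j \in [0,1]$ together with $t_{\widetilde{n}} \leq s_{\widetilde{n}} \leq M$ (the bound holds trivially when $t_{\widetilde{n}}<0$), I would obtain
\[
t_N \leq M\prod_{j=\widetilde{n}}^{N-1}\rho_j + \sum_{j=\widetilde{n}}^{N-1} d_j \leq M e^{-\sum_{j=\widetilde{n}}^{N-1}\alpha_j} + \epsilon,
\]
using $\rho_j \leq 1-\alpha_j \leq e^{-\alpha_j}$. To push the exponential term below $\epsilon$ it suffices that $\sum_{j=\widetilde{n}}^{N-1}\alpha_j \geq \ln(M/\epsilon) = \ln(4M(k+1))$; since $\alpha_j\in[0,1]$ gives $\sum_{j=0}^{\widetilde{n}-1}\alpha_j \leq \widetilde{n}$, it is enough to demand $\sum_{j=0}^{N-1}\alpha_j \geq \widetilde{n} + \lceil\ln(4M(k+1))\rceil$. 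The rate of divergence $\mathrm{A}$ delivers this as soon as $N-1 \geq {\rm A}(\widetilde{n}+\lceil\ln(4M(k+1))\rceil)$, i.e.\ precisely for $N \geq \rho_1(k)$. Combining the three contributions $2\epsilon$ (from the shift), $\epsilon$ (from the $d$-tail), and $\epsilon$ (from the exponential decay) yields $s_N \leq 4\epsilon = 1/(k+1)$.

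I expect the principal difficulty to be bookkeeping rather than conceptual: one must track constants carefully so that precisely $1/(k+1)$ is obtained at the end, which is what forces the particular argument $4k+3$ into the rates ${\rm B},{\rm C},{\rm D}$ and the shift by $2\epsilon$ in the substitution. The case $\sum \lambda_n=\infty$ is handled by swapping the roles of the two weighted terms and repeating the argument verbatim.
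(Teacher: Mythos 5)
Your proof is correct and follows essentially the same route as the paper's (the paper defers this lemma to adaptations of cited results, and its commented-out proof of the companion Lemma~\ref{l:Xuquant1} runs the same telescoping of the recursion with the bound $\alpha_m+\lambda_m\leq 2\bigl(1-(1-\alpha_m)(1-\lambda_m)\bigr)$, the estimate $\prod(1-\alpha_i)\leq\exp(-\sum\alpha_i)$, and the use of $\sum_{j<\widetilde{n}}\alpha_j\leq\widetilde{n}$ to invoke the rate of divergence). Your shift $t_m:=s_m-2\epsilon$ is just a tidier packaging of that induction, and all the constants check out.
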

\begin{lemma}\label{l:Xuquant1}
	Let $(s_n)$ be a bounded sequence of real numbers and $M\in\N$ a positive upper bound on $(s_n)$. Consider sequences of real numbers $(\alpha_n),(\lambda_n)\subset\, [0,1]$, $(v_n),(b_n),(c_n)\subset \R$ and assume the existence of a monotone function ${\rm A}$ which is a rate of divergence for $\sum \alpha_n= \infty$ (or $\sum \lambda_n = \infty$). For natural numbers $k, n$ and $p$ assume
	\[\forall m\in[n,p]\, \left(v_m\leq \frac{1}{4(k+1)(p+1)}\land b_m\leq \frac{1}{4(k+1)} \wedge c_m\leq \frac{1}{4(k+1)} \right),\]
	and 
	\[\forall m\in\N \,\left(s_{m+1}\leq (1-\alpha_m)(1-\lambda_m)(s_m+v_m)+ \alpha_mb_m + \lambda_m c_m\right).\]
	Then 	\[\forall m\in[\sigma_1(k,n),p]\, \left(s_m\leq \frac{1}{k+1}\right),\]
	with $\sigma_1(k,n):= \sigma_1[{\rm A},M](k,n):={\rm A}\left(n+\lceil \ln(4M(k+1))\rceil\right)+1$.
\end{lemma}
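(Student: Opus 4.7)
The plan is to iterate the recursive inequality into a closed form, control the initial contribution $s_n$ by exponential decay driven by the rate of divergence ${\rm A}$, and bound the accumulated driving terms by a telescoping argument. As a first step I would rewrite the hypothesis in single-parameter form by setting
\[\delta_m:=1-(1-\alpha_m)(1-\lambda_m)=\alpha_m+\lambda_m-\alpha_m\lambda_m,\]
which satisfies $\delta_m\geq\max\{\alpha_m,\lambda_m\}$. For $m\in[n,p]$, a short case split on the signs of $b_m,c_m$, together with $\alpha_m,\lambda_m\leq\delta_m$, yields $\alpha_m b_m+\lambda_m c_m\leq\delta_m/(2(k+1))$; and $(1-\delta_m)v_m\leq 1/(4(k+1)(p+1))$ since the bound on $v_m$ is positive and $(1-\delta_m)\in[0,1]$. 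The recursion then reads
\[s_{m+1}\leq(1-\delta_m)s_m+\frac{1}{4(k+1)(p+1)}+\frac{\delta_m}{2(k+1)},\qquad m\in[n,p].\]

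Unwinding this from $m-1$ down to $n$ (all intermediate indices lying in $[n,p]$ because $m\leq p$), and invoking the standard identity $\sum_{i=n}^{m-1}\prod_{j=i+1}^{m-1}(1-\delta_j)\delta_i=1-\prod_{i=n}^{m-1}(1-\delta_i)$, I would arrive at
\[s_m\leq M\prod_{i=n}^{m-1}(1-\delta_i)+\frac{m-n}{4(k+1)(p+1)}+\frac{1}{2(k+1)}\left(1-\prod_{i=n}^{m-1}(1-\delta_i)\right).\]
The middle summand is bounded by $1/(4(k+1))$ as soon as $m\leq p$ (since $m-n\leq p<p+1$), and the last by $1/(2(k+1))$, so only the first summand requires further work.

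To force $M\prod_{i=n}^{m-1}(1-\delta_i)\leq 1/(4(k+1))$, I would use $1-x\leq e^{-x}$ and $\delta_i\geq\alpha_i$, reducing the problem to producing $\sum_{i=n}^{m-1}\alpha_i\geq\ln(4M(k+1))$. Setting $K:=n+\lceil\ln(4M(k+1))\rceil$, the hypothesis on ${\rm A}$ gives $\sum_{i=0}^{{\rm A}(K)}\alpha_i\geq K$; crucially, since $\alpha_i\in[0,1]$, the prefix satisfies $\sum_{i=0}^{n-1}\alpha_i\leq n$, so the tail from $n$ exceeds $\lceil\ln(4M(k+1))\rceil$ as soon as $m-1\geq{\rm A}(K)$, i.e.\ $m\geq\sigma_1(k,n)$. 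Summing the three bounds yields $s_m\leq 1/(k+1)$ on $[\sigma_1(k,n),p]$; the case where it is $(\lambda_n)$ that has divergent sum is treated symmetrically by swapping the roles of $\alpha$ and $\lambda$ in this final step. The step I expect to be most delicate is exactly this passage from the ``prefix'' form of the rate of divergence to a genuine tail estimate starting at $n$, which hinges on the pointwise bound $\alpha_i\leq 1$ to absorb the initial segment into the argument of ${\rm A}$; the bookkeeping around $\delta_m$ in the first step is routine but requires care because $b_m$ and $c_m$ are not assumed nonnegative.
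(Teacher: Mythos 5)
Your proposal is correct and follows essentially the same route as the paper's argument: unwind the recursion to a closed form, bound the accumulated $v$-contribution by at most $p+1$ terms of size $\frac{1}{4(k+1)(p+1)}$, absorb the $b,c$-contribution into a $\frac{1}{2(k+1)}$ term, and kill the initial term $M\prod(1-\alpha_i)$ via $1-x\leq e^{-x}$ together with the prefix-to-tail estimate $\sum_{i=0}^{n-1}\alpha_i\leq n$. Your repackaging through $\delta_m=1-(1-\alpha_m)(1-\lambda_m)$ and the explicit telescoping identity is just a tidier bookkeeping of the same induction (the paper instead carries the constant $\frac{1}{2(k+1)}$ through the induction directly, using $(1-\alpha_m)(1-\lambda_m)\leq 1-\frac{\alpha_m+\lambda_m}{2}$), and your sign case-split for $b_m,c_m$ correctly handles the fact that these are not assumed nonnegative.
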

Consider the condition
\begin{equation}\label{c2'}
\forall m\in \N\, \left(\prod_{i= m}^{\infty} (1-\alpha_i)=0\right).
\end{equation}

One can equivalently work with this condition {\rm (\ref{c2'})} instead of the condition $\sum \alpha_n = \infty$. As such, it makes sense to also consider a quantitative hypothesis corresponding to {\rm (\ref{c2'})}:
\begin{equation}\label{q2'}
\begin{gathered}
{\rm A'}:\N\times \N\to\N \text{ is a monotone function satisfying}\\
\forall k, m\in \N\, \left( \prod_{i=m}^{{\rm A'}(m,k)}(1-\alpha_i)\leq \frac{1}{k+1}\right),
\end{gathered}
\end{equation}
implying that for each $m\in\N$, $A'(m, \cdot)$ is a rate of convergence towards zero for the sequence $\left(\prod_{i=m}^{n}(1-\alpha_i)\right)_n$. By saying that $\mathrm A'$ is monotone we mean that it is monotone in both variables,
\[
\forall k, k', m, m' \in \N\, \left( k\leq k' \land m\leq m' \to {\rm A'}(m, k)\leq {\rm A'}(m', k')\right).
\]

For some sequences $(\alpha_n)$, switching between these two conditions may prove to be useful since a rate of divergence for $(\sum \alpha_n)$ may have different complexity than a function satisfying {\rm (\ref{q2'})}. An easy example of this is the sequence $(\frac{1}{n+1})$ which has linear rates of convergence towards zero for $\left(\prod_{i= m}^{n} (1-\frac{1}{i+1})\right)_n$, but only an exponential rate of divergence for $\left(\sum_{i=0}^{n} \frac{1}{i+1}\right)_n$.

Next we state versions of Lemmas~\ref{l:Xuquant3} and \ref{l:Xuquant1} with a function $\mathrm A'$ satisfying condition \eqref{q2'} -- see \cite[Lemma~2.4]{K(15)}, \cite[Proposition~3.5]{LLPP(ta)}, and \cite[Lemma~16]{PP(ta)}.
\begin{lemma}\label{l:Xuquant4}
Let $(s_n)$ be a bounded sequence of nonnegative real numbers and $M\in\N$ be a positive upper bound on $(s_n)$. Consider sequences of real numbers $(\alpha_n),(\lambda_n)\subset\, [0,1]$, $(b_n),(c_n)\subset \R$ and $(d_n)\subset \R^+_0$. Assume that $\prod (1-\alpha_n)=0$ (or $\prod (1-\lambda_n)=0$), with ${\rm A'}$ a function satisfying the condition {\rm (\ref{q2'})},  $\limsup b_n \leq 0$ and  $\limsup c_n \leq 0$, with rates ${\rm B}$ and ${\rm C}$ respectively, and $\sum d_n < \infty$ with Cauchy rate ${ \rm D}$. Assume that, for all $m \in \N$
	\[s_{m+1}\leq (1-\alpha_m)(1-\lambda_m)s_m+ \alpha_mb_m + \lambda_m c_m+d_m.\]
	Then $s_n \to 0$ with rate of convergence
 $\rho_2(k):=\rho_2[{\rm A'}, {\rm B}, {\rm C}, {\rm D},M](k):={\rm A'}\left(\widetilde{n},4M(k+1)-1\right)+\widetilde{n}+1$,
where $\widetilde{n}:= \max\{{\rm B}(4k+3), {\rm C}(4k+3),{\rm D}(4k+3)+1\}$.
\end{lemma}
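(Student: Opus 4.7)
The plan is to mimic the proof of Lemma~\ref{l:Xuquant3} (sketched in the commented-out block), but to replace the use of a rate of divergence of $\sum \alpha_n$ by the function $\mathrm{A}'$ provided by condition (\ref{q2'}), which already gives an explicit decay for the products $\prod_{i=m}^{n}(1-\alpha_i)$. Set $\widetilde{n}$ as in the statement. Its role is to guarantee simultaneously, via the rates $\mathrm{B}$, $\mathrm{C}$ and $\mathrm{D}$, that $b_j \leq \frac{1}{4(k+1)}$ and $c_j \leq \frac{1}{4(k+1)}$ for every $j \geq \widetilde{n}$, and that any tail sum $\sum_{j=\widetilde{n}}^{N} d_j$ is at most $\frac{1}{4(k+1)}$ (using $\widetilde{n} \geq \mathrm{D}(4k+3)+1$). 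The alternative case $\prod(1-\lambda_n)=0$ is completely symmetric, obtained by swapping the roles of $(\alpha_n,b_n)$ and $(\lambda_n,c_n)$.

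Fix $k$ and $m \geq \widetilde{n}$. Writing $P_{a,b} := \prod_{i=a}^{b}(1-\alpha_i)(1-\lambda_i)$ (with the empty product equal to $1$), a straightforward induction on $m$ using the recursive hypothesis yields
\[
s_{m+1} \leq P_{\widetilde{n},m}\, s_{\widetilde{n}} + \sum_{j=\widetilde{n}}^{m} P_{j+1,m}\bigl(\alpha_j b_j + \lambda_j c_j + d_j\bigr).
\]
This is analysed in three pieces. For the initial term, drop the $(1-\lambda_i)$ factors and apply (\ref{q2'}) with parameters $\widetilde{n}$ and $4M(k+1)-1$, obtaining
\[
P_{\widetilde{n},m}\, s_{\widetilde{n}} \leq M\prod_{i=\widetilde{n}}^{m}(1-\alpha_i) \leq \frac{1}{4(k+1)}
\]
whenever $m \geq \mathrm{A}'(\widetilde{n},4M(k+1)-1)$. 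For the $d_j$-piece use $P_{j+1,m} \leq 1$ together with the Cauchy rate of $\sum d_n$ to bound the contribution by $\frac{1}{4(k+1)}$. For the mixed piece, set $\beta_j := 1 - (1-\alpha_j)(1-\lambda_j)$; then $\alpha_j \leq \beta_j$ and $\lambda_j \leq \beta_j$, and the standard telescoping identity $\sum_{j=\widetilde{n}}^{m} P_{j+1,m}\beta_j = 1 - P_{\widetilde{n},m} \leq 1$ yields $\sum_j P_{j+1,m}\alpha_j \leq 1$ and $\sum_j P_{j+1,m}\lambda_j \leq 1$, so using $b_j,c_j \leq \frac{1}{4(k+1)}$ the mixed contribution is at most $\frac{2}{4(k+1)} = \frac{1}{2(k+1)}$.

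Summing the three pieces gives $s_{m+1} \leq \frac{1}{k+1}$ for every $m$ satisfying both $m \geq \widetilde{n}$ and $m \geq \mathrm{A}'(\widetilde{n},4M(k+1)-1)$. Hence for any $n \geq \rho_2(k) = \mathrm{A}'(\widetilde{n},4M(k+1)-1) + \widetilde{n} + 1$ one has $s_n \leq \frac{1}{k+1}$, which is the claimed rate of convergence. The additive $\widetilde{n}$ in $\rho_2$ is precisely what absorbs the $\max$ above, so we need not separately argue that $\mathrm{A}'(\widetilde{n},\cdot)\geq\widetilde{n}$. I do not expect any serious obstacle beyond the bookkeeping of constants; the only substantive point is noticing that the pointwise inequalities $\alpha_j,\lambda_j \leq \beta_j$ let us reuse, essentially unchanged, the single-sequence telescoping argument underlying Lemma~\ref{l:Xuquant3}, so that passing from a rate of divergence to the product-form hypothesis (\ref{q2'}) is a purely cosmetic substitution in the last step of the computation.
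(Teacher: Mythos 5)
Your proof is correct and is essentially the standard argument behind this family of lemmas (the paper defers to \cite{K(15),LLPP(ta),PP(ta)}, but the analogous computation for Lemmas~\ref{l:Xuquant1} and \ref{l:Xuquant2} proceeds exactly this way): unfold the recursion from $\widetilde{n}$, drop the $(1-\lambda_i)$ factors to invoke \eqref{q2'} on the leading product, and bound the three error contributions by $\frac{1}{4(k+1)}+\frac{1}{4(k+1)}+\frac{1}{2(k+1)}$. Your telescoping identity via $\beta_j=1-(1-\alpha_j)(1-\lambda_j)$ is a clean, slightly tidier packaging of the invariant that the cited proofs maintain inside the induction, but it is the same idea.
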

\begin{lemma}\label{l:Xuquant2}
Let $(s_n)$ be a bounded sequence of real numbers and $M\in\N$ a positive upper bound on $(s_n)$. Consider sequences of real numbers $(\alpha_n),(\lambda_n)\subset\, [0,1]$, $(v_n),(b_n),(c_n)\subset \R$ and assume the existence of a monotone function ${\rm A'}:\N \times \N \to \N$ satisfying condition {\rm (\ref{q2'})}. For natural numbers $k, n$ and $p$ assume
	\[\forall m\in[n,p]\, \left(v_m\leq \frac{1}{4(k+1)(p+1)}\land b_m\leq \frac{1}{4(k+1)} \wedge c_m\leq \frac{1}{4(k+1)} \right),\]
	and 
	\[\forall m\in\N \,\left(s_{m+1}\leq (1-\alpha_m)(1-\lambda_m)(s_m+v_m)+ \alpha_mb_m + \lambda_m c_m\right).\]
	Then
	\[\forall m\in[\sigma_2(k,n),p]\, \left(s_m\leq \frac{1}{k+1}\right),\]
	with $\sigma_2(k,n):=\sigma_2[{\rm A'}, M](k,n):={\rm A'}\left(n,4M(k+1)-1\right)+1$.
\end{lemma}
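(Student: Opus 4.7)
\medskip

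\noindent\emph{Proof plan.} The strategy mirrors that of Lemma~\ref{l:Xuquant1}, the only real difference being that in the final step we exploit the quantitative hypothesis \eqref{q2'} directly on the product $\prod(1-\alpha_i)$ instead of going through a rate of divergence for $\sum\alpha_i$ together with the inequality $1-x\leq e^{-x}$. One may assume $p\geq \sigma_2(k,n)$, for otherwise the interval $[\sigma_2(k,n),p]$ is empty and the conclusion holds vacuously; in particular $p\geq n$.

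The main step will be to prove, by induction on $m\in\{0,1,\dots,p-n\}$, the auxiliary estimate
\[
s_{n+m+1}\leq \Bigl(\prod_{i=n}^{n+m}(1-\alpha_i)\Bigr)s_n + \frac{1}{4(k+1)(p+1)}\sum_{j=n}^{n+m}\prod_{i=j}^{n+m}(1-\alpha_i) + \frac{1}{2(k+1)}.
\]
The base case $m=0$ follows by applying the recursive hypothesis at index $n$ and using the assumed bounds $v_n\leq \tfrac{1}{4(k+1)(p+1)}$, $b_n\leq \tfrac{1}{4(k+1)}$, $c_n\leq \tfrac{1}{4(k+1)}$ (and $0\leq \lambda_n\leq 1$). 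The induction step is the same algebraic manipulation: from the recursion at index $n+m+1\in[n,p]$ one splits off the $v_{n+m+1}$, $b_{n+m+1}$, $c_{n+m+1}$ terms using the hypothesised bounds, multiplies the inductive inequality by $(1-\alpha_{n+m+1})(1-\lambda_{n+m+1})\leq 1$, and regroups.

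Once the estimate is established, note that $\sum_{j=n}^{n+m}\prod_{i=j}^{n+m}(1-\alpha_i)\leq m+1\leq p+1$ (since every factor lies in $[0,1]$), so the middle term is bounded by $\tfrac{1}{4(k+1)}$. Using $s_n\leq M$ this gives, for every $m\in[0,p-n]$,
\[
s_{n+m+1}\leq M\prod_{i=n}^{n+m}(1-\alpha_i) + \frac{3}{4(k+1)}.
\]
By condition \eqref{q2'} applied with the pair $(n,\,4M(k+1)-1)$, one has $\prod_{i=n}^{\mathrm{A}'(n,4M(k+1)-1)}(1-\alpha_i)\leq \tfrac{1}{4M(k+1)}$; since every $(1-\alpha_i)\in[0,1]$, the analogous bound persists for any larger upper index. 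Hence, for all $m$ with $n+m\geq \mathrm{A}'(n,4M(k+1)-1)$ and $m\leq p-n$, the right-hand side above is at most $\tfrac{1}{4(k+1)}+\tfrac{3}{4(k+1)}=\tfrac{1}{k+1}$. Reindexing via $\ell=n+m+1$ yields $s_\ell\leq \tfrac{1}{k+1}$ for all $\ell\in[\mathrm{A}'(n,4M(k+1)-1)+1,\,p]=[\sigma_2(k,n),p]$, which is the desired conclusion.

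The only subtlety worth flagging is the verification that the induction remains valid on the whole interval $[n,p]$; this is guaranteed by the assumption that the bounds on $v_m,b_m,c_m$ hold throughout $[n,p]$, and this is precisely where the localisation to the interval (rather than a global $\limsup$ assumption as in Lemma~\ref{l:Xuquant4}) plays a role. Beyond this, the argument is a routine quantitative calculation, so no genuine obstacle is expected.
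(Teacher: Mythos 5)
Your proposal is correct and follows essentially the same route as the paper's own argument: one establishes by induction the same auxiliary estimate as in Lemma~\ref{l:Xuquant1}, bounds the middle sum by $p+1$ to obtain $s_{n+m+1}\leq M\prod_{i=n}^{n+m}(1-\alpha_i)+\frac{3}{4(k+1)}$, and then applies \eqref{q2'} directly to the product (using that appending further factors from $[0,1]$ can only decrease it) instead of passing through a rate of divergence and $1-x\leq e^{-x}$. Nothing further is needed.
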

%
%
The next result is due to Suzuki. A (partial) quantitative version of Lemma~\ref{SuzukiLemma} was obtained in \cite{DP(20)} through an arithmetization of a certain $\limsup$. 
\begin{lemma}\label{SuzukiLemma}\textup{(\cite[Lemma 2.2 ]{Suzuki2005})}
Let $(z_n)$ and $(w_n)$ be bounded sequences in a Banach space $X$ and let $(\alpha_n)$ be a sequence in $[0,1]$ with $0< \liminf \alpha_n \leq \limsup \alpha_n <1$. Suppose that $z_{n+1}=\alpha_{n}w_n+(1-\alpha_n)z_n$ for all $n \in \N$, and $\limsup (\norm{w_{n+1} -w_n}-\norm{z_{n+1}-z_n})\leq 0$. Then $\lim \norm{w_n-z_n}=0$.
\end{lemma}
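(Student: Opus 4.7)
My plan is to follow Suzuki's classical substitution, which reduces the variable-coefficient recursion to a constant-coefficient one, and then to handle the constant-coefficient case by exploiting an iterated convex-combination formula.

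First, I would fix a constant $\beta$ with $\limsup \alpha_n < \beta < 1$, so that $\alpha_n \le \beta$ for every $n$ past some threshold $n_0$. Define
$$y_n := \frac{z_{n+1} - (1-\beta)\,z_n}{\beta}.$$
Substituting the recursion for $z_{n+1}$ gives $y_n = \frac{\alpha_n}{\beta}\,w_n + \bigl(1-\frac{\alpha_n}{\beta}\bigr)\,z_n$, which is a convex combination for $n \ge n_0$; in particular $(y_n)$ is bounded, $y_n - z_n = \frac{\alpha_n}{\beta}(w_n - z_n)$, and the recursion rewrites as $z_{n+1} = \beta\,y_n + (1-\beta)\,z_n$. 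Since $\liminf \alpha_n > 0$, showing $\|y_n - z_n\| \to 0$ is equivalent to the desired conclusion $\|w_n - z_n\| \to 0$ (and to $\|z_{n+1}-z_n\| \to 0$).

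Second, I would transfer the $\limsup$-hypothesis from $(w_n,z_n)$ to $(y_n,z_n)$. From the identity $\beta(y_{n+1}-y_n) = (z_{n+2}-z_{n+1}) - (1-\beta)(z_{n+1}-z_n)$ combined with $w_{n+1} - z_{n+1} = (w_{n+1}-w_n) + (1-\alpha_n)(w_n - z_n)$ and the boundedness of $(w_n)$, $(z_n)$, a direct estimate yields
$$\limsup\bigl(\|y_{n+1}-y_n\| - \|z_{n+1}-z_n\|\bigr) \le 0.$$

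The substantive step is then the constant-coefficient claim: under $z_{n+1} = \beta y_n + (1-\beta) z_n$ with constant $\beta \in (0,1)$, bounded $(y_n), (z_n)$, and the transferred $\limsup$-hypothesis, show $c_n := \|y_n - z_n\| \to 0$. From $c_{n+1} \le (1-\beta)c_n + \|y_{n+1}-y_n\|$ and $\|y_{n+1}-y_n\| \le \beta c_n + \eta_n$ with $\eta_n \to 0^+$, one derives $c_{n+1} \le c_n + \eta_n$, so $(c_n)$ converges to some $C \ge 0$. To force $C = 0$, I would iterate the recursion to obtain
$$z_{n+k+1} - y_n = \beta \sum_{j=0}^{k}(1-\beta)^{k-j}(y_{n+j}-y_n) + (1-\beta)^{k+1}(z_n - y_n),$$
take norms, bound $\|y_{n+j}-y_n\|$ by a telescoping sum controlled via the transferred hypothesis, and then pass to the double limit $n \to \infty$ followed by $k \to \infty$, exploiting the exponential decay $(1-\beta)^{k+1}$.

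The main obstacle is this final step: the naive estimate $c_{n+1}-c_n \le \eta_n$ is only enough to yield convergence of $(c_n)$, and not $C=0$. The extra leverage must come from combining the iterated formula with the fact that $\limsup \|y_{n+j+1}-y_{n+j}\| \le \beta C$, carefully balancing the telescoping growth $\|y_{n+j}-y_n\| \lesssim j\beta C$ against the decaying weights $(1-\beta)^{k-j}$. In one dimension this step reduces to a clean sign-change argument (if $y_n - z_n$ has eventually constant sign then $z_n$ is monotone and bounded, hence $c_n \to 0$; and sign-changes force $\|y_{n+1}-y_n\| \ge c_{n+1} + (1-\beta)c_n$, which contradicts the transferred hypothesis when $C > 0$), and the Banach-space argument is a careful analogue of this.
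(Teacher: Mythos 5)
The paper itself offers no proof of this lemma --- it is quoted verbatim from Suzuki \cite{Suzuki2005} --- so your attempt must stand on its own, and it has a genuine gap at the transfer step (and the endgame is not actually carried out). Writing $\epsilon_n:=\norm{w_{n+1}-w_n}-\norm{z_{n+1}-z_n}$, your identities give exactly
\[
\beta(y_{n+1}-y_n)=\alpha_{n+1}(w_{n+1}-w_n)+\bigl[\alpha_{n+1}(1-\alpha_n)-(1-\beta)\alpha_n\bigr](w_n-z_n),
\]
and using $\norm{w_{n+1}-w_n}\leq\alpha_n\norm{w_n-z_n}+\epsilon_n$ the triangle inequality yields, in the typical case $\alpha_{n+1}(1-\alpha_n)\geq(1-\beta)\alpha_n$,
\[
\beta\bigl(\norm{y_{n+1}-y_n}-\norm{z_{n+1}-z_n}\bigr)\leq(\alpha_{n+1}-\alpha_n)\norm{w_n-z_n}+\epsilon_n .
\]
Nothing in the hypotheses controls $\alpha_{n+1}-\alpha_n$ (the sequence may oscillate between, say, $1/4$ and $3/4$), so this bound is of the order of $\norm{w_n-z_n}$ itself --- precisely the quantity you are trying to send to $0$. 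The uncontrolled contribution $\tfrac{\alpha_{n+1}-\alpha_n}{\beta}(w_n-z_n)$ sits inside $y_{n+1}-y_n$ with no candidate for cancellation, so the transferred $\limsup$-hypothesis for the pair $(y_n,z_n)$ is not obtainable by a ``direct estimate''; it does hold under the full hypotheses, but only because it is a consequence of the conclusion of the lemma, which makes this route circular. (A reduction to the constant-coefficient case via this substitution would be legitimate only under an extra assumption such as $\alpha_{n+1}-\alpha_n\to 0$.)

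The constant-coefficient endgame, which you flag yourself as the main obstacle, is also not closed by the balancing you describe: with $\norm{y_{n+j}-y_n}\lesssim j\beta C$ the weighted sum $\beta\sum_{j=0}^{k}(1-\beta)^{k-j}\norm{y_{n+j}-y_n}$ is dominated by the term $j=k$, whose weight is $1$, so the right-hand side of your iterated formula grows like $k\beta^2C$ and gives no contradiction with boundedness. What actually forces $C=0$ in Suzuki's argument is a \emph{lower} bound, established by a delicate induction, of the form $\norm{w_{n+k}-z_n}\geq\bigl(\prod_{j=n}^{n+k-1}(1+\alpha_j)\bigr)\norm{w_n-z_n}-(\text{errors controlled by the hypothesis})$, whose exponential growth is incompatible with the boundedness of $(w_n)$ and $(z_n)$ once $\norm{w_n-z_n}$ stays bounded away from $0$; the one-dimensional sign-change heuristic has no evident analogue in a general Banach space. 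As written, neither the reduction nor the core estimate is established.
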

\begin{lemma}\label{LemmaSuzuki2}
Let $(z_n),(w_n)$ be sequences in a normed space $X$ and $N \in \N$ be such that $\norm{z_n},\norm{w_n}\leq N$, for all $n \in \N$. Let $(\alpha_n) \subset [0,1]$ be a sequence of real numbers and $a \in \N \setminus \{0\}$ be such that $\forall n \geq a \left(\frac{1}{a}\leq\alpha_n \leq 1 - \frac{1}{a}\right).$
Suppose that $z_{n+1}=\alpha_nw_n+(1-\alpha_n)z_n$, for all $n \in \N$ and that there exists a monotone function $\nu: \N \to \N$ such that 
\begin{equation}\label{eqnu}
\forall k \in \N \,\forall n \geq \nu(k) \left(\norm{w_{n+1}-w_n}-\norm{z_{n+1}-z_n}\leq \frac{1}{k+1}\right).
\end{equation}
Then $\chi$ is a quasi-rate of convergence for $\norm{w_m - z_m} \to 0$,
where $\chi=\chi[a,\nu,N]$, is the monotone function $\widetilde{\chi}$ from $\cite[Lemma~4.9]{DP(20)}$.
\end{lemma}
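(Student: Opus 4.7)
The plan is to derive this lemma directly from the quantitative version of Suzuki's Lemma~\ref{SuzukiLemma} already established as \cite[Lemma~4.9]{DP(20)}. That earlier result takes as input exactly the three pieces of data that appear here: a common norm bound $N$ on the sequences $(z_n)$ and $(w_n)$, a constant $a$ witnessing that $(\alpha_n)$ is uniformly bounded away from $0$ and $1$ (from some index onward), and a quantitative witness for the limsup inequality on $\norm{w_{n+1}-w_n}-\norm{z_{n+1}-z_n}$. It outputs a monotone quasi-rate $\widetilde{\chi}$ of convergence of $\norm{w_m-z_m}$ to $0$. The content of our Lemma~\ref{LemmaSuzuki2} is then precisely to package the hypotheses in the form required by \cite[Lemma~4.9]{DP(20)}.

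Concretely, I would first observe that~\eqref{eqnu} says exactly that $\nu$ is a rate for $\limsup a_n \leq 0$ in the sense of Definition~\ref{d:RN}(iii) for the sequence $a_n := \norm{w_{n+1}-w_n}-\norm{z_{n+1}-z_n}$, which is the format of the limsup hypothesis in \cite[Lemma~4.9]{DP(20)}. Next, I would check that the bounds $\frac{1}{a}\leq \alpha_n \leq 1-\frac{1}{a}$ for $n\geq a$ supply the quantitative ``away from $\{0,1\}$'' data required there; the finitely many initial indices $n<a$ contribute only a bounded perturbation that is absorbed when passing to the monotone version $\widetilde{\chi}$ of the raw bound. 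With these identifications, the conclusion is immediate: setting $\chi[a,\nu,N]:=\widetilde{\chi}$ from \cite[Lemma~4.9]{DP(20)}, one obtains a monotone quasi-rate of convergence for $\norm{w_m-z_m}\to 0$, as claimed.

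The main difficulty here is not conceptual but presentational: one has to make sure that the witness $\nu$ for~\eqref{eqnu} is plugged into the arithmetised-limsup formalism of \cite{DP(20)} in precisely the right way, and that the bookkeeping for the initial segment $n<a$ does not spoil monotonicity. Once this cosmetic translation is carried out, the proof requires no further quantitative argument and the result follows by invoking \cite[Lemma~4.9]{DP(20)} with the parameters $(a,\nu,N)$.
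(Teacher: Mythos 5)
Your proposal is correct and matches the paper's treatment: the paper gives no separate proof of this lemma, precisely because it is a direct restatement of the quantitative Suzuki lemma \cite[Lemma~4.9]{DP(20)}, with the conclusion literally defined via the function $\widetilde{\chi}$ extracted there. Your hypothesis-matching (identifying \eqref{eqnu} as the arithmetised $\limsup$ witness, $a$ as the quantitative ``away from $0$ and $1$'' datum, and $N$ as the norm bound) is exactly the intended justification.
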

The next two results give a quantitative strengthening of the fact that if $b \in \A(a)$ and $x$ is a zero of $\A$, then $\ip{ a-x,b} \geq 0.$
\begin{lemma}\label{l:monotone1}
Let $\A$ be a monotone operator and $\lambda >0$ be a real number. For $a, b, x \in H$ and $k \in \N$ we have 
\begin{equation*}
b \in \A(a) \to \ip{ a-x,b} \geq - \norm{J_{\lambda}^{\A}(x)- x}\left(\frac{\norm{J_{\lambda}^{\A}(x)-a}}{\lambda}+\norm{b} \right).
\end{equation*}
\end{lemma}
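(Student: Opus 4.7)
The plan is to reduce the inequality to an application of monotonicity by exploiting the fact that the resolvent produces a point for which one can exhibit an explicit element of $\A$. Concretely, I would set $y := J_{\lambda}^{\A}(x)$. By definition of the resolvent, $x \in (I+\lambda \A)(y)$, so $\frac{x-y}{\lambda} \in \A(y)$. Since $b \in \A(a)$ as well, monotonicity of $\A$ applied to the pairs $(a,b)$ and $\bigl(y,\frac{x-y}{\lambda}\bigr)$ yields
\[
\Bigl\langle\, a-y,\ b - \tfrac{x-y}{\lambda}\, \Bigr\rangle \;\geq\; 0,
\]
which rearranges to $\ip{a-y,b} \geq \tfrac{1}{\lambda}\ip{a-y,x-y}$.

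Next, I would write $a-x = (a-y) + (y-x)$ so that
\[
\ip{a-x,b} \;=\; \ip{a-y,b} + \ip{y-x,b} \;\geq\; \tfrac{1}{\lambda}\ip{a-y,x-y} + \ip{y-x,b}.
\]
Applying Cauchy--Schwarz to each inner product on the right (noting that $\ip{a-y,x-y} \geq -\norm{a-y}\,\norm{x-y}$ and $\ip{y-x,b} \geq -\norm{y-x}\,\norm{b}$) and factoring out $\norm{y-x} = \norm{J_{\lambda}^{\A}(x)-x}$, one obtains
\[
\ip{a-x,b} \;\geq\; -\,\norm{J_{\lambda}^{\A}(x)-x}\left(\frac{\norm{J_{\lambda}^{\A}(x)-a}}{\lambda} + \norm{b}\right),
\]
which is exactly the claimed inequality.

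There is no real obstacle here; the only ingredient beyond definitions is the monotonicity of $\A$, and the whole argument is a one-line application of monotonicity together with two uses of Cauchy--Schwarz. The slightly delicate point to get right is the bookkeeping of signs when rearranging $\ip{a-y,b-(x-y)/\lambda}\geq 0$, and the choice to split $a-x$ through the intermediate point $y = J_{\lambda}^{\A}(x)$ so that the resolvent factor $\norm{J_{\lambda}^{\A}(x)-x}$ can be factored out at the end.
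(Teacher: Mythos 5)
Your argument is correct and coincides with the paper's own proof: both set $y = J_{\lambda}^{\A}(x)$, use $\frac{1}{\lambda}(x-y)\in\A(y)$ together with monotonicity applied to the pairs $(a,b)$ and $\bigl(y,\frac{x-y}{\lambda}\bigr)$, split $a-x$ through $y$, and finish with two applications of Cauchy--Schwarz. No differences worth noting.
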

\begin{proof}
 By the definition of the resolvent function $x \in J_{\lambda}^{\A}(x)+ \lambda\A(J_{\lambda}^{\A}(x))$. Then $w \in \A(J_{\lambda}^{\A}(x))$, with $w= \frac{1}{\lambda}(x-J_{\lambda}^{\A}(x))$. Hence, using the monotonicity of $\A$, 

\begin{equation*}
\begin{split}
\ip{ a-x,b} &= \ip{ a-J_{\lambda}^{\A}(x),b } + \ip{ J_{\lambda}^{\A}(x)-x,b } \geq \ip{ a- J_{\lambda}^{\A}(x),w} +  \ip{ J_{\lambda}^{\A}(x)-x,b }\\
& \geq -\norm{J_{\lambda}^{\A}(x)-a} \norm{w} -\norm{J_{\lambda}^{\A}(x)-x}\norm{b} = - \norm{J_{\lambda}^{\A}(x)- x}\left(\frac{\norm{J_{\lambda}^{\A}(x)-a}}{\lambda}+\norm{b} \right). \qedhere
\end{split}
\end{equation*} 
\end{proof}
As a direct consequence of Lemma~\ref{l:monotone1} we obtain following lemma.
\begin{lemma}\label{l:monotone0}
Let $\A$ be a monotone operator and $\lambda >0$ be a real number. For $a, b, x \in H$ and $k \in \N$ we have 
\begin{equation*}
\left(b \in \A(a) \wedge \norm{J_{\lambda}^{\A}(x)-x}\leq \frac{\lambda}{2M(k+1)} \right)\to \ip{ a-x,b} \geq -\frac{1}{k+1},
\end{equation*}
where $M \in \N \setminus \{0\}$ is such that $M \geq \max\{\norm{J_{\lambda}^{\A}(x)-a}\!, \lambda\norm{b}\}$.
\end{lemma}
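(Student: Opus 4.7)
The plan is to simply substitute the quantitative hypotheses into the estimate provided by Lemma~\ref{l:monotone1}. That earlier lemma already gives the lower bound
\[
\langle a-x, b\rangle \geq -\norm{J_{\lambda}^{\A}(x)-x}\left(\frac{\norm{J_{\lambda}^{\A}(x)-a}}{\lambda}+\norm{b}\right),
\]
so the only work left is to control the right-hand side.

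First, I would bound the factor in parentheses using the majorization $M\geq \max\{\norm{J_{\lambda}^{\A}(x)-a},\,\lambda\norm{b}\}$. This gives $\norm{J_{\lambda}^{\A}(x)-a}/\lambda\leq M/\lambda$ and $\norm{b}\leq M/\lambda$, hence
\[
\frac{\norm{J_{\lambda}^{\A}(x)-a}}{\lambda}+\norm{b} \leq \frac{2M}{\lambda}.
\]
Next I would use the assumption $\norm{J_{\lambda}^{\A}(x)-x}\leq \frac{\lambda}{2M(k+1)}$ together with the above to conclude
\[
-\norm{J_{\lambda}^{\A}(x)-x}\left(\frac{\norm{J_{\lambda}^{\A}(x)-a}}{\lambda}+\norm{b}\right) \geq -\frac{\lambda}{2M(k+1)}\cdot\frac{2M}{\lambda} = -\frac{1}{k+1},
\]
which yields the desired inequality.

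There is no real obstacle here: the statement is essentially a direct quantitative instantiation of Lemma~\ref{l:monotone1}, and the choice of $M$ is specifically tailored so that the product telescopes to $1/(k+1)$. The only point worth a moment's care is making sure that $M \geq \lambda\norm{b}$ (rather than $M\geq \norm{b}$) is what is needed to cancel the $\lambda$ appearing in the denominator; this is exactly why the hypothesis on $M$ is stated in that asymmetric form.
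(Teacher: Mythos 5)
Your proof is correct and is exactly the computation the paper intends: the paper states Lemma~\ref{l:monotone0} as a direct consequence of Lemma~\ref{l:monotone1} without writing out the details, and your instantiation (bounding the bracket by $2M/\lambda$ and multiplying by the hypothesis on $\norm{J_{\lambda}^{\A}(x)-x}$) is precisely that omitted verification.
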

\begin{notation}
	For $q \in S$  and $N \in \N$, we denote by $B_{N}$ the closed ball centred at $q$ with radius $N$, i.e.\ $
	B_{N}:=\{ z \in H: \norm{z-q}\leq N\}.$ In the following, a point $q$ is always made clear from the context.
\end{notation}
The following lemma is an easy adaptation of \cite[Proposition~7]{PP(ta)}.
\begin{lemma}\label{l:projection}
	Let $\A:D(\A) \subset H \to 2^H$ and $\B:D(\B) \subset H \to 2^H$ be maximal monotone operators on a Hilbert space $H$. Assume that $S:= \A^{-1}(0) \cap \B^{-1}(0) \neq \emptyset$ and consider the resolvent functions $J^\A:=(Id+\frac{1}{R}\A)^{-1}$ and $J^\B:=(Id+\frac{1}{R}\B)^{-1}$, where $R \in \N \setminus\{0\}$. Take $u\in H$ and $N\in \N\setminus \{0\}$ a natural number satisfying $N\geq 2\|u-q\|$ for some point $q\in S$.
	For any $k\in \N$ and monotone function $f:\N \to \N $, there are $n \leq \zeta(k,f)$ and $x\in B_N$ such that
\begin{equation*}
\|J^\A(x)-x \| \leq \frac{1}{f(n)+1} \, \wedge \|J^\B(x)-x \| \leq \frac{1}{f(n)+1}
\end{equation*}
and 
\begin{equation*}
\forall y\in B_N  \left(\left(\|J^\A(y)-y\|\leq \frac{1}{n+1} \wedge \|J^\B(y)-y\|\leq \frac{1}{n+1}\right)\to \ip{ u-x,y-x} \leq \frac{1}{k+1}\right),
\end{equation*} 
	with $\zeta(k,f):=\zeta[N](k,f):=24N(w_{f,N}^{(E)}(0)+1)^2$, where $w_{f,N}:=\max\{ f(24N(m+1)^2),\, 24N(m+1)^2 \}$ and $E:=E[N,k]:=4N^4(k+1)^2$.
\end{lemma}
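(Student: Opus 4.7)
Abbreviate $\psi_n(z) := \|J^\A(z) - z\| \leq \tfrac{1}{n+1} \wedge \|J^\B(z) - z\| \leq \tfrac{1}{n+1}$, so that the conclusion asks for $n \leq \zeta(k,f)$ and $x \in B_N$ with $\psi_{f(n)}(x)$ together with the projection-type inequality against every $\psi_n$-witness in $B_N$. My plan is to mimic the finitary projection extraction of \cite[Proposition~7]{PP(ta)} and argue by contradiction: suppose that for some $k$ and some monotone $f$ the conclusion fails, so that for every $n \leq \zeta(k,f)$ and every $x \in B_N$ with $\psi_{f(n)}(x)$ there exists $y \in B_N$ with $\psi_n(y)$ and $\ip{u - x, y - x} > \tfrac{1}{k+1}$.

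Starting from $x_0 := q \in S$---which satisfies $\psi_m(x_0)$ for every $m$---I would build iteratively a chain $x_0, x_1, \ldots, x_E$ in $B_N$, with $E := 4N^4(k+1)^2$, together with levels $n_{E-1} := 0$ and $n_i := w_{f,N}^{(E-1-i)}(0)$. The shape of $w_{f,N}$ matches two constraints of the construction: first, $n_{i-1} = w_{f,N}(n_i) \geq f(24N(n_i+1)^2) \geq f(n_i)$ guarantees that the point $x_i$ produced at the previous stage (which is $\psi_{n_{i-1}}$) is automatically $\psi_{f(n_i)}$, so that the negated hypothesis can be invoked at level $n_i$ to produce $x_{i+1} \in B_N$ with $\psi_{n_i}(x_{i+1})$ and $\ip{u - x_i, x_{i+1} - x_i} > \tfrac{1}{k+1}$; second, monotonicity of $w_{f,N}$ places every $n_i$ below $w_{f,N}^{(E)}(0)$, hence inside the allowed range $\zeta(k,f) = 24N(w_{f,N}^{(E)}(0) + 1)^2$.

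The contradiction is then extracted from this chain by a quantitative projection argument. Nonexpansiveness of $\mathrm{Id} - J^\A$ and $\mathrm{Id} - J^\B$ (Lemma~\ref{l:fnin}) implies that convex perturbations $z_t := (1-t)x_i + t\,x_{i+1}$ still lie in the $\psi_{n_i}$-admissible class for $t$ in a range controlled by $\tfrac{1}{n_i+1} - \tfrac{1}{f(n_i)+1}$. Combined with the monotone boundedness of the infimum $n \mapsto \inf\{\|u - y\|^2 : y \in B_N,\ \psi_n(y)\}$ over $[0, (3N/2)^2]$, expanding
\[
\|u - z_t\|^2 = \|u - x_i\|^2 - 2t\,\ip{u - x_i, x_{i+1} - x_i} + t^2 \|x_{i+1} - x_i\|^2
\]
and optimising in $t$ yield bounds on the inner products which, accumulated across the $E$ iterations, overshoot the available range of $\|u - \cdot\|^2$ and thereby contradict the uniform lower bound $\tfrac{1}{k+1}$ on each $\ip{u - x_i, x_{i+1} - x_i}$.

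The principal difficulty is the bookkeeping of the three interacting precision scales---the fine $\tfrac{1}{f(n)+1}$ on the witness, the coarser $\tfrac{1}{n+1}$ on the test points, and the auxiliary tolerance $\tfrac{1}{n_i+1} - \tfrac{1}{f(n_i)+1}$ controlling the admissibility of $z_t$---which together dictate both the quadratic inflation $24N(m+1)^2$ inside $w_{f,N}$ and the choice $E = 4N^4(k+1)^2$ of the iteration length.
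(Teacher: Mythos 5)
The paper itself offers no proof of this lemma (it is quoted as an ``easy adaptation'' of \cite[Proposition~7]{PP(ta)}), and your overall strategy --- a finitary projection argument on the nested sets $F_m:=\{y\in B_N:\|J^\A(y)-y\|\leq\frac{1}{m+1}\wedge\|J^\B(y)-y\|\leq\frac{1}{m+1}\}$, using the bounded monotone quantity $d_m:=\inf\{\|u-y\|^2:y\in F_m\}$ and convex perturbations $z_t=(1-t)x+ty$ --- is the right one. But there is a genuine gap exactly at the point you defer to ``bookkeeping''. You control the admissibility of $z_t$ via nonexpansiveness of $\Id-J$, which gives only $\|J(z_t)-z_t\|\leq\|J(x)-x\|+2t\|y-x\|$ and hence confines $t$ to a range of size roughly $\frac{1}{4N}\bigl(\frac{1}{n_i+1}-\frac{1}{f(n_i)+1}\bigr)\leq\frac{1}{4N(n_i+1)}$. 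This range shrinks with the level, so the decrement of $\|u-\cdot\|^2$ obtained by optimising $t$ over it is of order $\frac{1}{N(n_i+1)(k+1)}$, not uniform in $i$; since the levels $n_i$ grow like iterates of $w_{f,N}$, these decrements form a (very rapidly) convergent series and do \emph{not} overshoot the range $[0,(3N/2)^2]$ after $E$ steps --- the accumulation argument fails as stated. The missing ingredient, which is what the quadratic factor $24N(m+1)^2$ inside $w_{f,N}$ encodes, is the \emph{firm} nonexpansiveness of the resolvents: if $T$ is firmly nonexpansive then so is $\Id-T$, and for $x,y\in B_N$ with $\|T(x)-x\|,\|T(y)-y\|\leq\varepsilon$ every convex combination $z$ satisfies $\|T(z)-z\|\leq 2\sqrt{N\varepsilon}+\varepsilon$. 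Hence any segment with endpoints in $F_{24N(m+1)^2}$ lies entirely in $F_m$, the admissible range of $t$ becomes level-independent, one may take $t\approx\frac{1}{4N^2(k+1)}$, and the per-step gain becomes a uniform $\approx\frac{1}{N^2(k+1)^2}$ --- which is precisely what the choice $E=4N^4(k+1)^2$ is calibrated against.

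A second, related, flaw is the choice of telescoping quantity. In your chain $x_{i+1}$ is the raw counterexample produced against $x_i$, and then nothing is shown to decrease: $\|u-x_{i+1}\|^2=\|u-x_i\|^2-2\ip{u-x_i,x_{i+1}-x_i}+\|x_{i+1}-x_i\|^2$ may well increase because of the last term. What telescopes is $d_m$ along the levels $m_0=0$, $m_{j+1}=w_{f,N}(m_j)$ (finitary pigeonhole for a bounded monotone sequence), and for this the witness at each stage must be an approximate \emph{minimiser} of $\|u-\cdot\|^2$ over $F_{m_{j+1}}$; the counterexample $y$ enters only through the perturbed point $z_t$, whose membership in $F_{m_j}$ yields $d_{m_j}\leq\|u-z_t\|^2<\|u-x\|^2-\delta\leq d_{m_{j+1}}+\varepsilon'-\delta$ and thus a definite increase $d_{m_{j+1}}-d_{m_j}\geq\delta-\varepsilon'$ at every failing stage. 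With these two corrections your outline closes and reproduces the argument behind \cite[Proposition~7]{PP(ta)}.
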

As usual, the first step to prove strong convergence is to show that the sequence given by the algorithm is bounded. An easy induction argument gives upper bounds on a sequence $(y_n)$ generated by \eqref{HPPA2}.
\begin{lemma}\label{l:vbounded}
Let $N \in \N$ be such that $N \geq \max\{\norm{u-q}, \norm{x_0-q}, \norm{q}\}$, for some $q \in S$. Then $\norm{y_n -q} \leq N$, for all $n \in \N$. In particular, $(y_n)$ is bounded with $\norm{y_n} \leq 2N$, for all $n \in \N$.
\end{lemma}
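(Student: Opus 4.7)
The proof will proceed by a straightforward induction on $n$, exploiting two standard facts: (i) since $q \in S = \A^{-1}(0) \cap \B^{-1}(0)$, the point $q$ is a fixed point of every resolvent $J^{\A}_{\beta_n}$ and $J^{\B}_{\mu_n}$; and (ii) the resolvent functions are nonexpansive (in fact firmly nonexpansive, but nonexpansivity suffices here). The assumption on $N$ gives simultaneously $\norm{u - q} \leq N$ and $\norm{y_0 - q} = \norm{x_0 - q} \leq N$, which sets up the base case and ensures that the convex combinations appearing in \eqref{HPPA2} stay in the closed ball $B_N$ around $q$.

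First I would verify the base case $\norm{y_0 - q} \leq N$, which is immediate. For the inductive step, assuming $\norm{y_{2n} - q} \leq N$, I would apply the nonexpansivity of $J^{\A}_{\beta_n}$ together with $q = J^{\A}_{\beta_n}(q)$ to get
\[
\norm{y_{2n+1} - q} \leq \norm{\alpha_n u + (1-\alpha_n)y_{2n} - q} = \norm{\alpha_n(u-q) + (1-\alpha_n)(y_{2n} - q)},
\]
which by the triangle inequality and convexity is bounded by $\alpha_n \norm{u - q} + (1-\alpha_n)\norm{y_{2n}-q} \leq N$. The identical argument with $J^{\B}_{\mu_n}$ and the coefficient $\lambda_n$ yields $\norm{y_{2n+2} - q} \leq N$, closing the induction.

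Finally, the bound $\norm{y_n} \leq 2N$ follows at once from the triangle inequality $\norm{y_n} \leq \norm{y_n - q} + \norm{q} \leq N + N$, using the hypothesis $N \geq \norm{q}$. There is no genuine obstacle in this proof: the key ingredients (nonexpansivity of resolvents and $q$ being a common fixed point) are recalled in the preliminaries, and the convex-combination structure of \eqref{HPPA2} ensures that the induction hypothesis is preserved without any dependence on the specific parameters $(\alpha_n), (\lambda_n), (\beta_n), (\mu_n)$ beyond them lying in $(0,1)$ and $(0, +\infty)$ respectively.
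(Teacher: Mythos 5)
Your proof is correct and is precisely the ``easy induction argument'' that the paper alludes to but does not spell out: the base case from $\norm{y_0-q}=\norm{x_0-q}\leq N$, the inductive step via nonexpansivity of the resolvents together with $q$ being a common fixed point, and the convexity estimate on $\alpha_n(u-q)+(1-\alpha_n)(y_{2n}-q)$. The final bound $\norm{y_n}\leq 2N$ via $N\geq\norm{q}$ is also exactly as intended.
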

%
\section{Metastability for \texorpdfstring{$\eqref{HPPA2}$}{MAR*}}\label{s:3}

In the following we will assume that $\A:D(\A) \subset H \to 2^H$ and $\B:D(\B) \subset H \to 2^H$ are maximal monotone operators such that $S:= \A^{-1}(0) \cap \B^{-1}(0) \neq \emptyset$. For arbitrary but fixed vectors $x_0, u \in H $, we will denote by $(x_n)$ a sequence generated by \eqref{HPPA2e} and by $(y_n)$ the corresponding ``error-free'' sequence generated by \eqref{HPPA2} (with $y_0=x_0$).
We assume that there exist $R \in \N\setminus \{0\}$ and monotone functions $a,\ell,{\rm A}, r_{\beta}, r_{\mu},t: \N \to \N$ such that 
\begin{enumerate}[($Q_1$)]
\item $a$ is a rate of convergence for $\alpha_n \to 0$ 
\item $\ell$ is a rate of convergence for $\lambda_n \to 0$  
\item $A$ is a rate of divergence for $(\sum\alpha_n)$ or $(\sum\lambda_n)$ 
\item $\forall n \in \N\left(\min\{\beta_n, \mu_n\} \geq \frac{1}{R}\right)$
\item $\forall n \in \N\left(\max\{\beta_n, \mu_n\}\leq t(n) \right)$
\item $r_{\beta}$ is a rate of convergence for $\frac{\beta_{n+1}}{\beta_n} \to 1 $
\item $r_{\mu}$ is a rate of convergence for $\frac{\mu_{n+1}}{\mu_n} \to 1 $.
\end{enumerate} 
We write $J^\A:=J^\A_{R^{-1}}$ and $J^\B:=J^\B_{R^{-1}}$.

In Subsection~\ref{s:quantitativeBM} we obtain an effective partial metastability bound on the iteration \eqref{HPPA2} (cf. Corollary~\ref{c:quantBM} below). This metastability property is partial in the sense that it is obtained under the conditions $(Q_3)-(Q_5)$ together with:
\begin{enumerate}
\item[($Q_{\eta}$)$\phantom{'}$] $\eta$ is a monotone quasi-rate of asymptotic regularity for $(y_n)$ w.r.t.\ $J^{\A}$ and $J^{\B}$
\item[($Q_{\eta'}$)] $\eta'$ is a monotone quasi-rate of convergence for $\norm{y_{n+1}-y_n} \to 0$.
\end{enumerate}
 Subsection~\ref{s:qrar} shows that it is possible to satisfy conditions ($Q_{\eta}$) and ($Q_{\eta'}$), and indeed obtain an effective metastability bound, under the assumptions $(Q_1),(Q_2),(Q_4),(Q_6)$ and $(Q_7)$ -- cf. Remarks~\ref{r:eta'} and \ref{r:eta}.
\subsection{Conditional metastability}\label{s:quantitativeBM}
%
The next quantitative lemma replaces the original sequential weak compactness argument.
\begin{lemma}\label{l:removal}
	Let $N\in \N\setminus \{0\}$ be such that $N\geq \max\{2\norm{u-q}, \norm{x_0-q}, \norm{q}\}$ for some point $q\in S$.
	For any $k\in \N$ and monotone function $f:\N \to \N $, there are $n \leq \Omega_{\eta}(k,f)$ and $x\in B_N$ such that
\begin{equation*}
\|J^\A(x)-x \| \leq \frac{1}{f(n)+1} \, \wedge \|J^\B(x)-x \| \leq \frac{1}{f(n)+1}
\end{equation*}
and
\begin{equation*}
\forall m\in [n,f(n)] \left(\ip{ u-x,y_m-x} \leq \frac{1}{k+1}\right),
\end{equation*} 
where $\Omega_{\eta}(k,f):=\Omega_{\eta}[N](k,f):=\eta(\zeta(k,\widehat{f}),f)$, with $\zeta=\zeta[N]$ as in Lemma~\ref{l:projection}, and  $\widehat{f}(m):=f(\eta(m,f))$.
\end{lemma}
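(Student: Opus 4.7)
The plan is to combine the weak projection lemma (Lemma~\ref{l:projection}) with the quasi-rate of asymptotic regularity $\eta$. First, I would apply Lemma~\ref{l:projection} with $k$ and the shifted function $\widehat{f}(m):=f(\eta(m,f))$ in place of $f$. This yields a natural number $n_0\leq \zeta(k,\widehat{f})$ and a point $x\in B_N$ satisfying
\[\|J^\A(x)-x\|\leq \tfrac{1}{\widehat{f}(n_0)+1}\quad\wedge\quad \|J^\B(x)-x\|\leq \tfrac{1}{\widehat{f}(n_0)+1},\]
together with the approximate projection property: for every $y\in B_N$, if $\|J^\A(y)-y\|\leq \tfrac{1}{n_0+1}$ and $\|J^\B(y)-y\|\leq \tfrac{1}{n_0+1}$, then $\langle u-x,\, y-x\rangle\leq \tfrac{1}{k+1}$.

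Next, I would feed $n_0$ and $f$ into the quasi-rate $\eta$. By ($Q_\eta$) there exists some $n\leq \eta(n_0,f)$ such that for all $m\in [n,f(n)]$ both $\|J^\A(y_m)-y_m\|\leq \tfrac{1}{n_0+1}$ and $\|J^\B(y_m)-y_m\|\leq \tfrac{1}{n_0+1}$ hold. By Lemma~\ref{l:vbounded} the sequence $(y_m)$ lies inside $B_N$ (using the weaker bound $N\geq \|u-q\|$ implied by the hypothesis), so the approximate projection property from Lemma~\ref{l:projection} applies pointwise, giving $\langle u-x,\, y_m-x\rangle\leq \tfrac{1}{k+1}$ for every $m\in[n,f(n)]$, which is the second conclusion.

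For the bound on $n$: since $\eta$ is monotone and $n_0\leq \zeta(k,\widehat{f})$, we have $n\leq \eta(n_0,f)\leq \eta(\zeta(k,\widehat{f}),f)=\Omega_\eta(k,f)$. The first conclusion is verified by observing that, since $n\leq \eta(n_0,f)$ and $f$ is monotone,
\[\|J^\A(x)-x\|\leq \tfrac{1}{\widehat{f}(n_0)+1}=\tfrac{1}{f(\eta(n_0,f))+1}\leq \tfrac{1}{f(n)+1},\]
and analogously for $J^\B$.

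The only subtle step is keeping the quantifiers straight and ensuring the bounds transfer correctly through the two monotonicity arguments. In particular, one must apply Lemma~\ref{l:projection} with the ``inflated'' function $\widehat{f}$ (which already encodes how far $\eta$ might push things) so that the resulting $x$ is good enough at the larger index $f(n)$, not just at $f(n_0)$. Once this bookkeeping is in place, the rest is routine composition of the two quantitative lemmas.
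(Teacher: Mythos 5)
Your proof is correct and follows essentially the same route as the paper's: apply Lemma~\ref{l:projection} with the inflated function $\widehat{f}$, then use $(Q_\eta)$ at threshold $n_0$, and transfer the bounds via the monotonicity of $f$ and $\eta$ together with $(y_n)\subset B_N$. No gaps.
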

\begin{proof}
By Lemma~\ref{l:projection}, there exist $n_0 \leq \zeta(k,\widehat{f})$ and $x \in B_N$ such that
\begin{equation*}
\|J^\A(x)-x \| \leq \frac{1}{\widehat{f}(n_0)+1} \, \wedge \|J^\B(x)-x \| \leq \frac{1}{\widehat{f}(n_0)+1}
\end{equation*}
and 
\begin{equation*}
\forall y\in B_N  \left(\left(\|J^\A(y)-y\|\leq \frac{1}{n_0+1} \wedge \|J^\B(y)-y\|\leq \frac{1}{n_0+1}\right)\to \ip{ u-x,y-x} \leq \frac{1}{k+1}\right).
\end{equation*} 
By ($Q_{\eta}$), there exists $n \leq \eta(n_0,f)$ such that
\begin{equation*}
 \forall m \in [n,f(n)] \left(\norm{J^{A}(y_{m})-y_{m}}\leq \frac{1}{n_0+1} \wedge  \norm{J^{B}(y_{m})-y_{m}} \leq \frac{1}{n_0+1} \right).
\end{equation*}
Since $f$ is monotone we have $\widehat{f}(n_0)\geq f(n)$. The result then follows from the fact that $(y_n)\subset B_N$.
\end{proof}
Next we show the main result of this section, which allows to obtain a rate of metastability for $(y_n)$.
\begin{theorem}\label{t:quantBM}
For $x_0, u \in H $, let $(y_n)$ be generated by \eqref{HPPA2}. Consider $N\in \N\setminus \{0\}$ such that $N\geq \max\{2\norm{u-q}, \norm{x_0-q}, \norm{q}\}$ for some point $q\in S$. Assume the condition $(Q_{\eta})$ and let $\Omega_{\eta}$ be as in Lemma~\ref{l:removal}. Assume that there exist $R \in \N\setminus \{0\}$ and monotone functions ${\rm A},t: \N \to \N$ satisfying conditions $(Q_3) - (Q_5)$.
For all $k \in \N$ and monotone function $f:\N\to \N$
\begin{equation*}
\exists n \leq \widetilde{\mu}(k,f) \, \exists x \in B_N \, \forall m\in [n,f(n)] \left(\norm{y_{2m}-x}\leq \frac{1}{k+1} \wedge \|J^\A(x)-x \| \leq \frac{1}{k+1} \wedge \|J^\B(x)-x \| \leq \frac{1}{k+1}\right),
\end{equation*}
where $\widetilde{\mu}(k,f):=\widetilde{\mu}[\eta,N,R,{\rm A},t](k,f):=\sigma_1(\widetilde{k},\Omega_{\eta}(k',2g+2))$, with $\sigma_1:=\sigma_1[A,4N^2]$ as in Lemma~\ref{l:Xuquant1} and
\begin{enumerate}
\item[] $g(n):= 64NR(k+1)^2(p(n)+1)t(p(n))-1$ 
\item[] $k':=16(k+1)^2-1$ 
\item[] $p(n):=f(\sigma_1(\widetilde{k},n))$ 
\item[] $\widetilde{k}:=(k+1)^2-1$.
\end{enumerate}
\end{theorem}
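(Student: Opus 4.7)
The strategy combines the ``approximate projection'' Lemma~\ref{l:removal} (which replaces sequential weak compactness) with a Halpern-type recursive estimate, processed through the quantitative Xu-type Lemma~\ref{l:Xuquant1}. Since Lemma~\ref{l:Xuquant1} gives control of $s_m := \norm{y_{2m}-x}^2$, I set $\widetilde{k}:=(k+1)^2-1$, so that the target $\norm{y_{2m}-x}\leq 1/(k+1)$ is equivalent to $s_m \leq 1/(\widetilde{k}+1)$.

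Given $k$ and monotone $f$, I would set $p(n):=f(\sigma_1(\widetilde{k},n))$, $g(n):=64NR(k+1)^2(p(n)+1)t(p(n))-1$, $k':=16(k+1)^2-1$, and invoke Lemma~\ref{l:removal} with parameters $k'$ and $F:=2g+2$ to obtain $n_0\leq\Omega_\eta(k',2g+2)$ and $x\in B_N$ with $\norm{J^\A(x)-x},\norm{J^\B(x)-x}\leq 1/(2g(n_0)+3)$ and $\ip{u-x,y_m-x}\leq 1/(k'+1)$ for all $m\in[n_0,2g(n_0)+2]$. The scale of $g$ is calibrated so that, after using Lemma~\ref{lemmaresolvineq} (together with the resolvent identity) to upgrade $\norm{J^\A(x)-x}$ to a bound on $\norm{J^\A_{\beta_m}(x)-x}$ of the order $Rt(m)\cdot\norm{J^\A(x)-x}$ (and similarly for $\B$), these approximate-zero quantities are dominated by $1/(4(\widetilde{k}+1)(p(n_0)+1))$ uniformly in $m\in[n_0,p(n_0)]$.

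The core computation is to derive, via firm nonexpansiveness of $J^\A_{\beta_m}$, $J^\B_{\mu_m}$ and the Hilbert-space identity $\norm{(1-\alpha)a+\alpha b}^2\leq(1-\alpha)^2\norm{a}^2+2\alpha\ip{b,(1-\alpha)a+\alpha b}$, a recursive estimate of the shape
\[\norm{y_{2m+2}-x}^2\leq(1-\alpha_m)(1-\lambda_m)\bigl(\norm{y_{2m}-x}^2+v_m\bigr)+\alpha_m b_m+\lambda_m c_m,\]
where $v_m$ collects the errors from treating $x$ as a common zero of $\A$ and $\B$, while $b_m$ and $c_m$ reduce to $2\ip{u-x,y_{2m+1}-x}$ and $2\ip{u-x,y_{2m+2}-x}$ plus negligible terms. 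The parameter choices above guarantee $v_m\leq 1/(4(\widetilde{k}+1)(p(n_0)+1))$ and $b_m,c_m\leq 1/(4(\widetilde{k}+1))$ throughout $m\in[n_0,p(n_0)]$.

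Finally, with $M:=4N^2$ serving as the required upper bound for $s_m$ (since $\norm{y_{2m}-x}\leq\norm{y_{2m}-q}+\norm{q-x}\leq 2N$ by Lemma~\ref{l:vbounded}), applying Lemma~\ref{l:Xuquant1} with input $n:=n_0$ and $p:=p(n_0)$ yields $s_m\leq 1/(\widetilde{k}+1)=1/(k+1)^2$ for all $m\in[\sigma_1(\widetilde{k},n_0),p(n_0)]$. Setting $n:=\sigma_1(\widetilde{k},n_0)$, we have $[n,f(n)]=[\sigma_1(\widetilde{k},n_0),p(n_0)]$, and monotonicity of $\sigma_1$ (inherited from that of ${\rm A}$) delivers $n\leq\widetilde{\mu}(k,f)$; the bounds $\norm{J^\A(x)-x},\norm{J^\B(x)-x}\leq 1/(k+1)$ in the conclusion follow at once from the much sharper bound $\leq 1/(2g(n_0)+3)$ produced by Lemma~\ref{l:removal}. \emph{The main obstacle} will be the careful derivation of the recursive inequality in the exact form required by Lemma~\ref{l:Xuquant1}: the approximate-zero error terms must be absorbed into a $v_m$ of the prescribed size, and it is precisely the use of the resolvent identity to express $\norm{J^\A_{\beta_m}(x)-x}$ in terms of $\norm{J^\A(x)-x}$ with a factor of order $Rt(m)$ that forces the factor $Rt(p(n))$ appearing in $g(n)$.
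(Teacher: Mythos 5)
Your proposal is correct and follows essentially the same route as the paper: Lemma~\ref{l:removal} applied with $k'$ and $2g+2$, a recursive estimate of the form required by Lemma~\ref{l:Xuquant1} with $M=4N^2$, $n:=n_0$, $p:=p(n_0)$, and the conclusion at $n=\sigma_1(\widetilde{k},n_0)$ with $[n,f(n)]=[\sigma_1(\widetilde{k},n_0),p(n_0)]$. The only local difference is in deriving the recursion: the paper applies the quantitative monotonicity lemma (Lemma~\ref{l:monotone0}) to an explicit element of $\B(y_{2m+2})$ at the fixed parameter $R^{-1}$, whereas you pass from $\norm{J^{\B}(x)-x}$ to $\norm{J^{\B}_{\mu_m}(x)-x}$ via the resolvent identity (picking up the factor $R\mu_m\leq Rt(p)$) and then use firm nonexpansiveness relative to the approximate fixed point $x$ --- an equivalent computation yielding error terms of the same order, hence the same $g$.
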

\begin{proof}
We may assume that for all $m \leq \widetilde{\mu}(k,f)$ it holds that $f(m) \geq m$, otherwise the result is trivial.
By Lemma~\ref{l:removal} there exist $n_0 \leq \Omega_{\eta}(k',2g+2)$ and $x_0\in B_N$ such that 
\begin{equation}\label{e:almostzero}
\|J^\A(x_0)-x_0 \| \leq \frac{1}{g(n_0)+1} \, \wedge \|J^\B(x_0)-x_0 \| \leq \frac{1}{g(n_0)+1}
\end{equation}
and
\begin{equation}\label{e:prodintsmalli}
\forall m\in [n_0,2p+2] \left(\ip{ u-x_0,y_m-x_0} \leq \frac{1}{16(k+1)^2}\right),
\end{equation} 
with $p:=p(n_0)$.
From \eqref{e:prodintsmalli} we derive that for all $m\in [n_0,p]$
\begin{equation}\label{e:prodintsmall1}
\ip{ u-x_0,y_{2m+1}-x_0} \leq \frac{1}{16(k+1)^2} \mbox{ \, and \,} \ip{ u-x_0,y_{2m+2}-x_0} \leq \frac{1}{16(k+1)^2}.
\end{equation} 
For all $m \in \N$, by the definition of $y_{2m+2}$ we have
\begin{equation*}
\B(y_{2m+2}) \ni \frac{1}{\mu_m}\left(\lambda_m(u-x_0)+(1-\lambda_m)(y_{2m+1}-x_0)-(y_{2m+2}-x_0) \right).
\end{equation*}
We have that
\begin{equation*}
\max \left\{\norm{J^{\B}(x_0)-y_{2m+2}},\frac{1}{R}\norm{\frac{1}{\mu_m}\left(\lambda_mu+(1-\lambda_m)y_{2m+1}-y_{2m+2} \right)}\right\} \leq 2N.
\end{equation*}
Hence, by Lemma~\ref{l:monotone0} and \eqref{e:almostzero}
\begin{equation*}
\ip{ y_{2m+2}-x_0,\frac{1}{\mu_m}\left(\lambda_m(u-x_0)+(1-\lambda_m)(y_{2m+1}-x_0)-(y_{2m+2}-x_0) \right)} \geq -\frac{1}{16(k+1)^2(p+1)t(p)}
\end{equation*}
Hence
\begin{equation*}
\begin{split}
2 \norm{y_{2m+2}-x_0}^2 &\leq 2(1-\lambda_m)\ip{ y_{2m+2}-x_0, y_{2m+1}-x_0} \\
& \qquad +2 \lambda_m \ip{ y_{2m+2}-x_0,u-x_0} + \frac{\mu_m}{8(k+1)^2(p+1)t(p)}\\
& \leq (1-\lambda_m)\left(\norm{y_{2m+2}-x_0}^2 +\norm{y_{2m+1}-x_0}^2\right) +2 \lambda_m \ip{ y_{2m+2}-x_0,u-x_0}\\
& \qquad + \frac{\mu_m}{8(k+1)^2(p+1)t(p)},
\end{split}
\end{equation*}
which implies that 
\begin{equation}\label{e:y2n+2}
 \norm{y_{2m+2}-x_0}^2 \leq (1-\lambda_m)\norm{y_{2m+1}-x_0}^2 +2 \lambda_m \ip{ y_{2m+2}-x_0,u-x_0} + \frac{\mu_m}{8(k+1)^2(p+1)t(p)}.
\end{equation}
Similarly,
\begin{equation}\label{e:y2n+1}
 \norm{y_{2m+1}-x_0}^2 \leq (1-\alpha_m)\norm{y_{2m}-x_0}^2 +2 \alpha_m \ip{ y_{2m+1}-x_0,u-x_0} + \frac{\beta_m}{8(k+1)^2(p+1)t(p)}.
\end{equation}
Combining \eqref{e:y2n+2} and \eqref{e:y2n+1} we derive
\begin{equation}\label{e:maintheorem}
 \norm{y_{2m+2}-x_0}^2 \leq (1-\alpha_m)(1-\lambda_m)(\norm{y_{2m}-x_0}^2 +v_m)+ \alpha_m b_m+ \lambda_m c_m,
\end{equation}
where
\begin{enumerate}
\item[] $v_m:= \dfrac{\mu_m+\beta_m}{8(k+1)^2(p+1)t(p)}$, 
\item[] $b_m:=2\left(\ip{ y_{2m+1}-x_0,u-x_0} +\dfrac{\mu_m+\beta_m}{16(k+1)^2(p+1)t(p)}\right)$, and 
\item[] $c_m:=2\left(\ip{ y_{2m+2}-x_0,u-x_0} + \dfrac{\mu_m}{16(k+1)^2(p+1)t(p)}\right)$.
\end{enumerate}
For $m \in [n_0,p]$, we have 
\begin{equation*}
v_m \leq  \frac{4t(p)}{16(k+1)^2(p+1)t(p)}\leq \frac{1}{4(k+1)^2(p+1)} 
\end{equation*}
and
\begin{equation*}
c_m \leq  2\left(\ip{ y_{2m+2}-x_0,u-x_0} + \frac{t(p)}{16(k+1)^2t(p)}\right)\leq \frac{1}{4(k+1)^2} .
\end{equation*}
Notice that $\sigma_1(\widetilde{k},n_0) \leq \widetilde{\mu}(k,f)$, using the monotonicity of $\sigma_1$. By the assumption on $f$, $p \geq \sigma_1(\widetilde{k},n_0) \geq 1$ which entails, for $m \in [n_0,p]$
\begin{equation*}
b_m \leq  2\left(\ip{ y_{2m+1}-x_0,u-x_0} +\frac{2t(p)}{32(k+1)^2t(p)}\right)\leq \frac{1}{4(k+1)^2} .
\end{equation*}
By Lemma~\ref{l:Xuquant1} we conclude that for $m \in [\sigma_1(\widetilde{k},n_0),f(\sigma_1(\widetilde{k},n_0))]$ 
\begin{equation*}
\norm{y_{2m}-x_0}^2\leq\frac{1}{(k+1)^2},
\end{equation*}
and the result follows with $n:=\sigma_1(\widetilde{k},n_0)$ and $x=x_0$.
\end{proof}
\begin{corollary}\label{c:quantBM}
Under the conditions of Theorem~\ref{t:quantBM} and $(Q_{\eta'})$, we have that for all $k \in \N$ and monotone function $f:\N\to \N$
\begin{equation*}
\exists n \leq \mu(k,f) \, \forall i,j\in [n,f(n)] \left(\norm{y_{i}-y_j}\leq \frac{1}{k+1} \right),
\end{equation*}
where $\mu(k,f):=\Psi[\Phi[\psi_1,\psi_2]](k,f)$, with $\psi_1(k,f):=\eta'(4k+3,2f)$ and $\psi_2(k,f):=\widetilde{\mu}(4k+3,f)$, $\Psi$ as in Proposition~\ref{p:metameta2}, $\Phi$ as in Remark~\ref{r:metameta}, and $\widetilde{\mu}$ is as in Theorem~\ref{t:quantBM}.
\end{corollary}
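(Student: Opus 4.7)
The plan is to produce, for each $k$ and monotone $f$, a common center $x \in B_N$ and a window $[n, f(n)]$ with $n \leq \mu(k, f)$ on which every $y_m$ lies within $1/(2(k+1))$ of $x$; then a single triangle inequality yields $\|y_i - y_j\| \leq 1/(k+1)$ for all $i, j$ in that window. I will build this common center by separately controlling the even subsequence $(y_{2m})$ and the gaps $\|y_{2m+1} - y_{2m}\|$, and then recombining via Propositions~\ref{p:metameta} and \ref{p:metameta2}.

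First, I specialize Theorem~\ref{t:quantBM} to level $4k+3$, obtaining that $\psi_2(k, f) := \widetilde{\mu}(4k+3, f)$ is a rate for the meta-statement ``$\exists n\, \exists x \in B_N\, \forall m \in [n, f(n)]\, (\|y_{2m} - x\| \leq 1/(4(k+1)))$''. Next, applying $(Q_{\eta'})$ at level $4k+3$ with input function $2f$, I conclude that $\psi_1(k, f) := \eta'(4k+3, 2f)$ is a rate for ``$\exists n\, \forall m \in [n, f(n)]\, (\|y_{2m+1} - y_{2m}\| \leq 1/(4(k+1)))$'', because for $m \in [n, f(n)]$ the index $l = 2m$ lies in $[n, 2f(n)]$, which is the range controlled by $\eta'$.

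The two statements are then combined via Proposition~\ref{p:metameta} (in the symmetrized form of Remark~\ref{r:metameta}). Since the first statement does not mention any witness, a common $x \in B_N$ can serve for both, yielding the rate $\Phi[\psi_1, \psi_2](k, f)$ for the conjunction. A triangle inequality gives $\|y_{2m+1} - x\| \leq \|y_{2m+1} - y_{2m}\| + \|y_{2m} - x\| \leq 1/(2(k+1))$ and $\|y_{2m} - x\| \leq 1/(2(k+1))$ for all $m \in [n, f(n)]$, which is precisely the premise of Proposition~\ref{p:metameta2} applied to the formula $A(k, m, x) := \|y_m - x\| \leq 1/(2(k+1))$. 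Its conclusion delivers $\mu(k, f) = \Psi[\Phi[\psi_1, \psi_2]](k, f)$ as a rate for ``$\exists n\, \exists x\, \forall m \in [n, f(n)]\, (\|y_m - x\| \leq 1/(2(k+1)))$'', and a final triangle inequality provides the desired bound $\|y_i - y_j\| \leq 1/(k+1)$.

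The only (minor) subtlety is the cascading of factors of $2$: each triangle inequality halves the available precision, so the ingredient statements must start from $1/(4(k+1))$ in order to end at $1/(k+1)$; this accounts for the choice of $4k+3$ in both $\psi_1$ and $\psi_2$, and for the use of $2f$ rather than $f$ in $\psi_1$ (since controlling the gap at even indices $2m$ for $m \in [n, f(n)]$ requires controlling $\|y_{l+1} - y_l\|$ for $l$ up to $2f(n)$).
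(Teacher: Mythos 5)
Your proposal is correct and follows essentially the same route as the paper's proof: control the even subsequence via Theorem~\ref{t:quantBM} at precision $1/(4(k+1))$, control the consecutive gaps via $(Q_{\eta'})$ with the reindexed function $2f$, combine with Remark~\ref{r:metameta}, merge parities with Proposition~\ref{p:metameta2}, and finish with the triangle inequality. The bookkeeping of the factors of $2$ and the choice of $4k+3$ match the paper exactly.
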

\begin{proof}
From $(Q_{\eta'})$ we obtain
\begin{equation*}
\forall k \in \N \, \mforall f: \N \to \N \, \exists n \leq \psi_1(k,f) \,\forall m \in [n,f(n)] \left(\norm{y_{2m+1}-y_{2m}}\leq \frac{1}{4(k+1)} \right),
\end{equation*}
and by Theorem~\ref{t:quantBM}
\begin{equation*}
\forall k \in \N \, \mforall f: \N \to \N \,\exists n \leq \psi_2(k,f)\, \exists x \in B_N \,\forall m \in [n,f(n)] \left(\norm{y_{2m}-x}\leq \frac{1}{4(k+1)} \right).
\end{equation*}
By Remark~\ref{r:metameta}, for all  $k \in \N$ and monotone  $f: \N \to \N$  there exist $ n \leq \Phi[\psi_1,\psi_2](k,f)$ and  $x \in B_N$ such that
\begin{equation*}
\forall m \in [n,f(n)] \left(\norm{y_{2m+1}-y_{2m}}\leq \frac{1}{4(k+1)} \wedge \norm{y_{2m}-x}\leq \frac{1}{4(k+1)} \right).
\end{equation*}
For $m \in [n,f(n)]$,
\begin{equation*}
\norm{y_{2m+1} -x} \leq \norm{y_{2m+1}-y_{2m}}+\norm{y_{2m}-x} \leq \frac{1}{2(k+1)}.
\end{equation*}
Hence, by Proposition~\ref{p:metameta2}, there exists $ n\leq \mu(k,f)$ such that for $i,j \in [n,f(n)]$
\begin{equation*}
\norm{y_{i} -y_{j}} \leq \norm{y_{i}-x}+\norm{y_{j}-x} \leq \frac{1}{2(k+1)}+\frac{1}{2(k+1)}=\frac{1}{k+1},
\end{equation*}
which entails the result.
\end{proof}
\subsection{Asymptotic regularity and metastability}\label{s:qrar}
We now show that under the assumptions $(Q_1),(Q_2),(Q_4),(Q_6)$ and $(Q_7)$ it is possible to satisfy the conditions ($Q_{\eta}$) and ($Q_{\eta'}$). This implies that the results of the previous subsection hold under the conditions $(Q_1)-(Q_7)$.
\begin{lemma}\label{l:asymreg1}
Let $N \in \N$ be such that $N \geq \max\{\norm{u-q}, \norm{x_0-q}, \norm{q}\}$, for some $q \in S$. Assume that there exist monotone functions $a,\ell, r_{\beta}, r_{\mu}: \N \to \N$ satisfying conditions $(Q_1), (Q_2), (Q_6)$ and $(Q_7)$.
Then
\begin{enumerate}[$(i)$]
\item\label{qrcodd} $\norm{y_{2n+1}-y_{2n-1}} \to 0$ with monotone quasi-rate of convergence $\eta_{0}(k,f):=\chi[4,\nu,3N](k,f)$
\item\label{qrceven} $\norm{y_{2n+2}-y_{2n}} \to 0$ with monotone quasi-rate of convergence $\eta_{1}(k,f):=\max\{\eta_0(2k+1,\widetilde{f}[N_1]),N_1 \}$
\end{enumerate}
where $\chi$ is the monotone function from Lemma~\ref{LemmaSuzuki2} and
\begin{enumerate}
\item[] $\nu(k):= \max \{a(38N(k+1)-1), \ell(22N(k+1)-1)+1,r_{\beta}(11N(k+1)-1), r_{\mu}(16N(k+1)-1)+1 \},$
\item[] $N_1:= \max\{\ell(16N(k+1)-1)+1,r_{\mu}(16N(k+1)-1)+1\}$
\end{enumerate}
\end{lemma}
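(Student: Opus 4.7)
The plan is to establish part (i) by an application of the quantitative Suzuki lemma (Lemma~\ref{LemmaSuzuki2}) to auxiliary sequences constructed from the odd-indexed subsequence $(y_{2n-1})$, and to deduce part (ii) from (i) via a direct estimate based on the $B$-step of the algorithm together with the resolvent identity.

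For part (i), I will introduce $z_n := \tfrac{1}{2}\,y_{2n-1}$ and $w_n := y_{2n+1} - \tfrac{1}{2}\,y_{2n-1}$, and take the constant coefficient $\alpha_n := \tfrac{1}{2}$. A direct verification gives $z_{n+1} = \tfrac{1}{2} w_n + \tfrac{1}{2} z_n$ and $\|w_n - z_n\| = \|y_{2n+1} - y_{2n-1}\|$; using Lemma~\ref{l:vbounded} one has $\|z_n\| \leq N$ and $\|w_n\| \leq 3N$, and $a=4$ trivially witnesses $\tfrac{1}{4} \leq \tfrac{1}{2} \leq 1 - \tfrac{1}{4}$. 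The bulk of the work is then to supply the monotone $\nu$ validating the quantitative Suzuki hypothesis. Expanding $w_{n+1} - w_n = (y_{2n+3} - y_{2n+1}) - \tfrac{1}{2}(y_{2n+1} - y_{2n-1})$ and $z_{n+1} - z_n = \tfrac{1}{2}(y_{2n+1} - y_{2n-1})$, I apply the resolvent identity to pass from $J^A_{\beta_{n+1}}$ to $J^A_{\beta_n}$ (yielding a residual proportional to $|1-\beta_{n+1}/\beta_n|$ on bounded quantities), and similarly from $J^B_{\mu_n}$ to $J^B_{\mu_{n-1}}$ (yielding a residual proportional to $|1-\mu_n/\mu_{n-1}|$); firm nonexpansivity of the resolvents together with the triangle inequality and the decomposition $A_{n+1}-A_n = (\alpha_{n+1}-\alpha_n)(u-y_{2n}) + (1-\alpha_{n+1})(y_{2n+2}-y_{2n})$ (and analogously for the $B$-side) extract the further residuals proportional to $\alpha_{n+1}, \lambda_{n-1}$. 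Since every auxiliary factor lies in a ball of radius $\leq 2N$ around $q$, the bound takes the schematic form
\[
\|w_{n+1} - w_n\| - \|z_{n+1} - z_n\| \leq C_1 N \alpha_{n+1} + C_2 N \lambda_{n-1} + C_3 N \bigl|1 - \tfrac{\beta_{n+1}}{\beta_n}\bigr| + C_4 N \bigl|1 - \tfrac{\mu_n}{\mu_{n-1}}\bigr|
\]
for explicit constants $C_i$. Requiring each summand to be at most $\tfrac{1}{4(k+1)}$ and invoking $(Q_1),(Q_2),(Q_6),(Q_7)$ produces the stated $\nu$, with the numerical factors $38, 22, 11, 16$ appearing as $4C_i$.

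For part (ii), the same manipulation applied only to the $B$-step gives, after a single use of the resolvent identity for $J^B_{\mu_n}\to J^B_{\mu_{n-1}}$ and nonexpansivity, an estimate of the form
\[
\|y_{2n+2} - y_{2n}\| \leq \|y_{2n+1} - y_{2n-1}\| + 4N\lambda_{n-1} + 4N \bigl|1 - \tfrac{\mu_n}{\mu_{n-1}}\bigr|.
\]
Choosing $n \geq N_1$ makes the last two error terms together at most $\tfrac{1}{2(k+1)}$, while invoking part~(i) with parameter $2k+1$ on the shifted function $\widetilde{f}[N_1]$ returns an $n$ and an interval on which $\|y_{2n+1} - y_{2n-1}\| \leq \tfrac{1}{2(k+1)}$. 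Adding the two contributions yields $\|y_{2n+2} - y_{2n}\| \leq \tfrac{1}{k+1}$ on the same interval, which is the content of (ii) with quasi-rate $\eta_1(k,f)=\max\{\eta_0(2k+1,\widetilde{f}[N_1]),N_1\}$.

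The main obstacle is the quantitative verification of the Suzuki hypothesis in (i): each invocation of the resolvent identity and each triangle-inequality step introduces auxiliary residuals whose constant factors must be tracked precisely against the bound $N$ on the iterates and on $u$. Careful book-keeping of these constants is what determines the coefficients $38, 22, 11, 16$ in $\nu$, and (via the simpler single-step analysis for the $B$-iteration) the coefficient $16$ in $N_1$.
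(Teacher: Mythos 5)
Your treatment of part (ii) matches the paper's: one derives an inequality of the form \eqref{e:dif_pares} bounding $\norm{y_{2n+2}-y_{2n}}$ by $\norm{y_{2n+1}-y_{2n-1}}$ plus terms controlled by $(Q_2)$ and $(Q_7)$, and then feeds in part (i) with parameter $2k+1$ and the shifted function $\widetilde{f}[N_1]$. The problem is part (i), on which (ii) depends.

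The decomposition you propose for the Suzuki lemma is tautological and its key hypothesis cannot be verified. With $z_n:=\tfrac12 y_{2n-1}$, $w_n:=y_{2n+1}-\tfrac12 y_{2n-1}$ and $\alpha_n\equiv\tfrac12$, the recursion $z_{n+1}=\tfrac12 w_n+\tfrac12 z_n$ holds for \emph{any} sequence whatsoever, so all the content of Lemma~\ref{LemmaSuzuki2} is shifted onto verifying \eqref{eqnu}, i.e.\ $\norm{w_{n+1}-w_n}-\norm{z_{n+1}-z_n}\leq \frac{1}{k+1}$ eventually. Here $z_{n+1}-z_n=\tfrac12(y_{2n+1}-y_{2n-1})$ while $w_{n+1}-w_n=(y_{2n+3}-y_{2n+1})-\tfrac12(y_{2n+1}-y_{2n-1})$. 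The only estimate the resolvent identity and (firm) nonexpansivity give you on the first summand is $\norm{y_{2n+3}-y_{2n+1}}\leq\norm{y_{2n+1}-y_{2n-1}}+\epsilon_n$ with $\epsilon_n\to0$; combined with the triangle inequality this yields only $\norm{w_{n+1}-w_n}-\norm{z_{n+1}-z_n}\leq\norm{y_{2n+1}-y_{2n-1}}+\epsilon_n$, and the right-hand side is exactly the quantity you are trying to prove tends to $0$ --- the argument is circular. Nor can the hypothesis hold for structural reasons alone: an oscillating difference $y_{2n+1}-y_{2n-1}=(-1)^n e$ is compatible with nonexpansivity of each step but gives $\norm{w_{n+1}-w_n}=3\norm{z_{n+1}-z_n}$. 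What is actually needed is the \emph{averaged} structure of the resolvents: writing $J^{\A}_{\beta_n}=\tfrac12(\Id+T_n^{\A})$ and $J^{\B}_{\mu_{n-1}}=\tfrac12(\Id+F_{n-1}^{\B})$ with $T_n^{\A},F_{n-1}^{\B}$ nonexpansive (Lemma~\ref{l:fnin}) and unwinding both half-steps shows that the coefficient of $y_{2n-1}$ passing through untouched is $\tfrac{(1-\alpha_n)(1-\lambda_{n-1})}{4}\approx\tfrac14$, not $\tfrac12$; this forces the decomposition $y_{2n+1}=\tfrac14 y_{2n-1}+\tfrac34 v_n$ of \eqref{e:34}, where $v_n$ is built from $T_n^{\A}z_n$ and $F_{n-1}^{\B}w_{n-1}$. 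Only for this choice of $w_n=v_n$ can $\norm{v_{n+1}-v_n}$ be bounded by $\norm{y_{2n+1}-y_{2n-1}}$ plus terms vanishing at the rates prescribed by $(Q_1),(Q_2),(Q_6),(Q_7)$, because the differences of the nonexpansive parts telescope against the argument differences. Your choice of $w_n$ (equivalently $2y_{2n+1}-y_{2n-1}$ after rescaling) is not, even approximately, of this form, so the coefficients $38,22,11,16$ cannot be recovered from your set-up. You should replace your decomposition by the $\tfrac14/\tfrac34$ one and carry out the book-keeping on $\norm{T_{n+1}^{\A}z_{n+1}-T_n^{\A}z_n}$ and $\norm{F_n^{\B}w_n-F_{n-1}^{\B}w_{n-1}}$ via the resolvent identity.
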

\begin{proof}
By the resolvent identity we have, for some $R \in \N\setminus \{0\}$
\begin{equation}
y_{2n+2}=J^{\B}\left(\frac{1}{R\mu_n}(\lambda_n u+ (1-\lambda_n)y_{2n+1})+ \left(1-\frac{1}{R\mu_n} \right) y_{2n+2}\right).
\end{equation}
By Lemma~\ref{l:vbounded}, we have that $\norm{u-y_n} \leq 2N$ and $\norm{y_{n+1}-y_n} \leq 2N$, for all $n \in \N$.
Then, using the fact that $J^{\B}$ is nonexpansive we derive 
\begin{equation*}
\begin{split}
\norm{y_{2n+2}-y_{2n}}& \leq \left \|\frac{1}{R\mu_n}(\lambda_n u+ (1-\lambda_n)y_{2n+1})+ \left(1-\frac{1}{R\mu_n} \right) y_{2n+2}\right.\\
    & \quad \left. -\frac{1}{R\mu_{n-1}}(\lambda_{n-1} u+ (1-\lambda_{n-1})y_{2n-1})- \left(1-\frac{1}{R\mu_{n-1}} \right) y_{2n}) \right\|\\
& = \left \| \left(\frac{\lambda_n}{R\mu_n}-\frac{\lambda_{n-1}}{R\mu_{n-1}}\right) u + \frac{1-\lambda_n}{R\mu_n}(y_{2n+1}-y_{2n-1}) + y_{2n-1}\left(\frac{1-\lambda_n}{R\mu_n}-\frac{1-\lambda_{n-1}}{R\mu_{n-1}}\right) \right.\\
& \quad \left.+ \left(1-\frac{1}{R\mu_n} \right) y_{2n+2}- \left(1-\frac{1}{R\mu_{n-1}} \right) y_{2n} \right\|\\
& = \left \| \left(\frac{\lambda_n}{R\mu_n}-\frac{\lambda_{n-1}}{R\mu_{n-1}}\right) (u-y_{2n-1}) + \frac{1-\lambda_n}{R\mu_n}(y_{2n+1}-y_{2n-1}) \right.\\
& \quad \left. + \left(\frac{1}{R\mu_n}-\frac{1}{R\mu_{n-1}}\right)(y_{2n-1}-y_{2n}) + \left(1-\frac{1}{R\mu_n} \right) (y_{2n+2}-y_{2n})  \right\|\\
&   \leq \frac{1-\lambda_n}{R\mu_n}\norm{y_{2n+1}-y_{2n-1}} + \left(1-\frac{1}{R\mu_n} \right) \norm{y_{2n+2}-y_{2n}}\\
& \quad + 2N\left(\left | \frac{ \lambda_{n}}{R\mu_{n}}-\frac{ \lambda_{n-1}}{R\mu_{n-1}} \right | + \left | \frac{1}					{R\mu_{n}} - \frac{1}{R\mu_{n-1}} \right | \right).
\end{split}
\end{equation*}
Hence,
\begin{equation}\label{e:dif_pares}
\norm{y_{2n+2}-y_{2n}}  \leq (1-\lambda_{n})\norm{y_{2n+1}-y_{2n-1}} +2N\left( \lambda_{n}+\lambda_{n-1}+ (\lambda_{n-1} +1)  \left | 1 - \frac{\mu_{n}}{\mu_{n-1}} \right |\right) .
\end{equation}
Define $T_{n}^{\A}:=2J_{\beta_n}^{\A}-\Id$. For all $n \in \N$, since the resolvent function is firmly nonexpansive,  by Lemma~\ref{l:fnin},  $T_{n}^{\A}$ is nonexpansive.
By the definition of \eqref{HPPA2} we have $y_{2n+1}= \frac{z_n + T_{n}^{\A} z_n}{2}$, where $z_n:= \alpha_n u +(1-\alpha_n) y_{2n}$. Using the resolvent identity it holds
\begin{equation*}
\begin{split}
\norm{T_{n+1}^{\A} z_{n+1} - T_{n}^{\A} z_n}& \leq  \norm{T_{n+1}^{\A} z_{n+1} - T_{n+1}^{\A} z_n} + 					\norm{T_{n+1}^{\A} z_{n} - T_{n}^{\A} z_n}\\
   & \leq \norm{z_{n+1} - z_n} + 2 \norm{J_{\beta_{n+1}}^{\A} z_n - J_{\beta_{n}}^{\A} z_n}\\
   & = \norm{z_{n+1} - z_n} + 2\norm{J_{\beta_{n+1}}^{\A} z_n - J_{\beta_{n+1}}^{\A} z 		\left( \frac{\beta_{n+1}}{\beta_n} z_n + \left(1-\frac{\beta_{n+1}}{\beta_n}  \right)J_{\beta_{n}}^{\A} z_n \right)}\\
& \leq 2N(\alpha_{n+1}+\alpha_n) + \norm{y_{2n+2}-y_{2n}} + 2 \left | 1-\frac{\beta_{n+1}}{\beta_n} \right| \norm{z_n - J_{\beta_{n+1}}^{\A} z_n}.
\end{split}
\end{equation*}
Similarly we have $y_{2n+2}= \frac{w_n + F_{n}^{\B} w_n}{2}$, where $F_{n}^{\B}$ is the nonexpansive function defined by $F_{n}^{\B}:=2J_{\mu_n}^{\B}-\Id$, and where $w_n:= \lambda_n u +(1-\lambda_n) y_{2n+1}$. 
As above we have
\begin{equation*}
\begin{split}
\norm{F_{n}^{\B} w_{n} - F_{n-1}^{\B} w_{n-1}}& \leq \norm{w_{n} - w_{n-1}} + 2 \norm{J_{\mu_{n}}^{\B} w_{n-1} - 				J_{\mu_{n-1}}^{\B} w_{n-1}}\\
   & \leq 2N(\lambda_{n} + \lambda_{n-1}) +  \norm{y_{2n+1} - y_{2n-1}} + 2 \left | 1-\frac{\mu_{n}}{\mu_{n-1}} \right| 			\norm{w_{n-1} - J_{\mu_{n}}^{\B} w_{n-1}}.
\end{split}
\end{equation*}
The definitions above imply that 
\begin{equation}\label{e:34}
y_{2n+1}=\frac{1}{4}y_{2n-1}+\frac{3}{4} v_n,
\end{equation}
where $(v_n)$ is the bounded sequence defined by
\begin{equation*}
v_n :=\frac{\lambda_{n-1} (1-\alpha_n) (u-y_{2n-1}) + \alpha_n (2u -y_{2n-1}-F_{n-1}^{\B} w_{n-1})+(F_{n-1}^{\B} w_{n-1} + 2 T_{n}^{\A} z_n)}{3}.
\end{equation*}
Since $\norm{u-y_n} \leq 2N$ and $\norm{y_n}\leq 2N$, for all $n \in \N$ we have 
\begin{equation*}
\begin{split}
\norm{2u-y_{2n+1}-F_{n}^{\B} w_{n}}&=\norm{2u-y_{2n+1}-2y_{2n+2}-\lambda_{n} u -(1-\lambda_{n}) y_{2n+1}}\\  
& \leq 2 \norm{u-y_{2n+2}}+\norm{y_{2n+1}+\lambda_{n} u + (1-\lambda_{n}) y_{2n+1}}\\
& \leq 2 \norm{u-y_{2n+2}}+\norm{u-y_{2n+1}}+2 \norm{y_{2n+1}}\leq 10N.
\end{split}
\end{equation*}
Hence
\begin{equation*}
\begin{split}
3 \norm{v_{n+1}-v_n} & \leq 2N(\lambda_{n}+\lambda_{n-1})+ 10N(\alpha_{n+1}+\alpha_n)  + \norm{F_{n}^{\B} 				w_{n} - F_{n-1}^{\B} w_{n-1}} + 2 \norm{T_{n+1}^{\A} z_{n+1} - T_{n}^{\A} z_n}\\
& \leq 4 N (\lambda_{n}+\lambda_{n-1}) +  14N (\alpha_{n+1}+\alpha_n) +  3\norm{y_{2n+1} - y_{2n-1}} \\ 
	&\quad+ 2 \left | 1-\frac{\mu_{n}}{\mu_{n-1}} \right| \norm{w_{n-1} - J_{\mu_{n}}^{\B} w_{n-1}}+4 \left | 1-\frac{\beta_{n+1}}{\beta_n} \right| \norm{z_n - J_{\beta_{n+1}}^{\A} z_n}\\
	&\quad+4N\left( \lambda_{n}+\lambda_{n-1}+ (\lambda_{n-1} +1)  \left | 1 - \frac{\mu_{n}}{\mu_{n-1}} \right |\right).
\end{split}
\end{equation*}
Using Lemma~\ref{l:vbounded} we have 
\begin{equation*}
\begin{split}
\norm{z_n - J_{\beta_{n+1}}^{\A} z_n} & \leq \norm{z_n - q}+ \norm{J_{\beta_{n+1}}^{\A} z_n -q} \leq 2 \norm{z_n -q} \\
& \quad = 2\norm{\alpha_n (u-q) +(1-\alpha_n) (y_{2n} -q)} \leq 2N
\end{split}
\end{equation*}
and, similarly,  $ \norm{w_{n-1} - J_{\mu_{n}}^{\B} w_{n-1}}\leq 2N$. Since $\lambda_{n-1} \leq 1$ we then have 
\begin{equation*}
\begin{split}
\norm{v_{n+1}-v_n}- \norm{y_{2n+1} - y_{2n-1}} & \leq \frac{8}{3} N (\lambda_{n}+\lambda_{n-1}) +  \frac{14}{3}N (\alpha_{n+1}+\alpha_n)  \\ 
	&\quad+ 4N \left | 1-\frac{\mu_{n}}{\mu_{n-1}} \right|+\frac{8}{3}N \left | 1-\frac{\beta_{n+1}}{\beta_n} \right|.
\end{split}
\end{equation*}
The latter inequality entails that 
\begin{equation*}
	\forall k \in \N \, \forall n \geq \nu(k) \left(\norm{v_{n+1}-v_n} - \norm{y_{2n+1} - y_{2n-1}}\leq \frac{1}{k+1}\right),
\end{equation*}
where $\nu(k):= \max \{a(38N(k+1)-1), \ell(22N(k+1)-1)+1,r_{\beta}(11N(k+1)-1), r_{\mu}(16N(k+1)-1)+1 \}$. 
Indeed, by condition~($Q_2$) for $n \geq  \ell(22N(k+1)-1)+1$ 
\begin{equation*}
\frac{8}{3} N (\lambda_{n}+\lambda_{n-1}) \leq \frac{16N}{3(22N(k+1))} \leq \frac{1}{4(k+1)}, 
\end{equation*}
by condition~($Q_1$) for $n \geq a(38N(k+1)-1)$
\begin{equation*}
\frac{14}{3} N (\alpha_{n+1}+\alpha_{n}) \leq \frac{28N}{3(38N(k+1))} \leq \frac{1}{4(k+1)},  
\end{equation*}
by condition~($Q_7$) for $n \geq r_{\mu}(16N(k+1)-1)+1$
\begin{equation*}
4 \left| 1-\frac{\mu_{n}}{\mu_{n-1}} \right| \leq \frac{4N}{16N(k+1)} = \frac{1}{4(k+1)},  
\end{equation*}
and by condition~($Q_6$) for $n \geq r_{\beta}(11N(k+1)-1)$
\begin{equation*}
\frac{8}{3} \left| 1-\frac{\beta_{n+1}}{\beta_n} \right| \leq \frac{8N}{3(11N(k+1))} \leq \frac{1}{4(k+1)}.  
\end{equation*}
By Lemma~\ref{LemmaSuzuki2} and \eqref{e:34}, it follows that part~\eqref{qrcodd} holds.

For $n \geq N_1$, we have  $2N\left( \lambda_{n}+\lambda_{n-1}+ (\lambda_{n-1} +1)  \left | 1 - \frac{\mu_{n}}{\mu_{n-1}} \right |\right) \leq \frac{1}{2(k+1)}$. By part~\eqref{qrcodd}, there is $n_0 \leq \eta_0(2k+1,\widetilde{f}[N_1])$ such that 
\begin{equation*}
\forall n \in [n_0, \widetilde{f}[N_1](n_0)] \left( \norm{y_{2n+1}-y_{2n-1}} \leq \frac{1}{2(k+1)} \right).
\end{equation*}
With $n_1:= \max\{n_0,N_1\}\leq \eta_1(k,f)$, part~\eqref{qrceven} follows from \eqref{e:dif_pares} and the fact that $[n_1, f(n_1)] \subseteq [n_0, \widetilde{f}[N_1](n_0)]$.
\end{proof}
\begin{lemma}\label{l:asymreg2}
Let $N \in \N$ be such that $N \geq \max\{\norm{u-q}, \norm{x_0-q}, \norm{q}\}$, for some $q \in S$. Assume that there exist monotone functions $a,\ell, r_{\beta}, r_{\mu}: \N \to \N$ satisfying conditions $(Q_1), (Q_2), (Q_6)$ and $(Q_7)$. Then
 $ \norm{y_{n+1}-y_{n}} \to 0$ with monotone quasi-rate of convergence $$\eta_2(k,f):=\eta_2[N](k,f):=2\max\{\eta_1(8N(k+1)^2-1,\check{f}),N_2\}+1,$$
where $\eta_1$ is as in Lemma~\ref{l:asymreg1} and
\begin{enumerate}
\item[]  $\check{f}(n):=f(2\max\{n, N_2\}+1)$
\item[] $N_2:= \max\{a(32N^2(k+1)^2-1),\ell(32N^2(k+1)^2-1)\}$.
\end{enumerate}
\end{lemma}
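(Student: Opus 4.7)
I would first rewrite the iteration in a form convenient for Halpern-style estimates by setting
\[
z_m := \alpha_m u + (1-\alpha_m) y_{2m}, \qquad w_m := \lambda_m u + (1-\lambda_m) y_{2m+1},
\]
so that $y_{2m+1}=J^{\A}_{\beta_m}(z_m)$ and $y_{2m+2}=J^{\B}_{\mu_m}(w_m)$. Since $\|z_m-y_{2m}\|=\alpha_m\|u-y_{2m}\|\leq 2N\alpha_m$ and $\|w_m-y_{2m+1}\|\leq 2N\lambda_m$ by Lemma~\ref{l:vbounded}, the triangle inequality gives
\[
\norm{y_{2m+1}-y_{2m}}\leq \norm{y_{2m+1}-z_m}+2N\alpha_m,\qquad
\norm{y_{2m+2}-y_{2m+1}}\leq \norm{y_{2m+2}-w_m}+2N\lambda_m,
\]
so the problem reduces to controlling $\|y_{2m+1}-z_m\|$ and $\|y_{2m+2}-w_m\|$.

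Next I would exploit the firm nonexpansivity of the resolvents at the common zero $q\in S$, together with convexity of $\|\cdot\|^2$, to derive
\[
\norm{y_{2m+1}-q}^2+\norm{z_m-y_{2m+1}}^2\leq \alpha_m\norm{u-q}^2+(1-\alpha_m)\norm{y_{2m}-q}^2,
\]
and the analogous inequality for $y_{2m+2},\,w_m$ with $\lambda_m$. Substituting the first into the second, rearranging, and using $\|u-q\|\leq N$ as well as
$\norm{y_{2m}-q}^2-\norm{y_{2m+2}-q}^2\leq 2N\norm{y_{2m+2}-y_{2m}}$,
yields the key estimate
\[
(1-\lambda_m)\norm{z_m-y_{2m+1}}^2+\norm{w_m-y_{2m+2}}^2\leq (\alpha_m+\lambda_m)N^2+2N\norm{y_{2m+2}-y_{2m}}.
\]
This reduces asymptotic regularity of consecutive differences to the three controlled quantities $\alpha_m,\lambda_m,\norm{y_{2m+2}-y_{2m}}$.

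For the quantitative step, I would pick $N_2$ (as defined in the statement) so that conditions $(Q_1),(Q_2)$ force $\alpha_m,\lambda_m\leq 1/(32N^2(k+1)^2)$ for $m\geq N_2$, and then apply Lemma~\ref{l:asymreg1}\eqref{qrceven} at accuracy $8N(k+1)^2-1$ to the function $\check f$, producing $m_0\leq \eta_1(8N(k+1)^2-1,\check f)$ such that $\norm{y_{2m+2}-y_{2m}}\leq 1/(8N(k+1)^2)$ for all $m\in[m_0,\check f(m_0)]$. Setting $n_1:=\max\{m_0,N_2\}$, a short case distinction on whether $m_0\geq N_2$ (using that $\check f$ is constant on $[0,N_2]$) shows $[n_1,\check f(n_1)]\subseteq[m_0,\check f(m_0)]$, so on this window the key estimate above yields $\norm{z_m-y_{2m+1}},\norm{w_m-y_{2m+2}}\leq 1/(k+1)-2N\max\{\alpha_m,\lambda_m\}$, and therefore both $\norm{y_{2m+1}-y_{2m}}$ and $\norm{y_{2m+2}-y_{2m+1}}$ are bounded by $1/(k+1)$. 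Finally I would take $n:=2n_1+1\leq \eta_2(k,f)$; since $n_1\geq N_2$ gives $f(n)=f(2n_1+1)=\check f(n_1)$, each index $j\in[n,f(n)]$ is of the form $2m$ or $2m+1$ with $m\in[n_1,\check f(n_1)]$, so either the even or the odd estimate applies and delivers $\norm{y_{j+1}-y_j}\leq 1/(k+1)$.

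The main obstacle is the bookkeeping: verifying that the explicit constants $32N^2(k+1)^2$ and $8N(k+1)^2$ are tight enough that, after dividing by $1-\lambda_m$ and re-absorbing the $2N\alpha_m,2N\lambda_m$ tails from the triangle inequalities, the result is still $\leq 1/(k+1)$, and checking that the index transformation encoded in $\check f(n)=f(2\max\{n,N_2\}+1)$ correctly passes the metastable window for the even subsequence $(y_{2m})$ through to the full sequence; the splitting trick $n=2n_1+1$ (in the spirit of Proposition~\ref{p:metameta2}) takes care of the parity.
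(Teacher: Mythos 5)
Your argument is correct and ends with exactly the same bookkeeping as the paper (the threshold $N_2$, the transfer of the window through $\check f$, and the parity split at $n=2n_1+1$), but the pointwise estimate is derived differently. The paper tests the inclusions $\beta_n\A y_{2n+1}\ni \alpha_n(u-y_{2n})+y_{2n}-y_{2n+1}$ and $\mu_n\B y_{2n+2}\ni \lambda_n(u-y_{2n+1})+y_{2n+1}-y_{2n+2}$ against the common zero $q$ via monotonicity, writes $\norm{y_{2n+2}-y_{2n+1}}^2$ as the sum $\ip{y_{2n+2}-y_{2n+1},y_{2n+2}-q}+\ip{y_{2n+1}-y_{2n+2},y_{2n+1}-q}$, bounds this by $2N^2(\alpha_n+\lambda_n)+N\norm{y_{2n+2}-y_{2n}}$, and then recovers the odd-indexed difference $\norm{y_{2m+1}-y_{2m}}$ by a triangle inequality through $\norm{y_{2m+2}-y_{2m}}$. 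You instead use firm nonexpansivity of the resolvents at $q$ together with convexity of $\norm{\cdot}^2$ to control the residuals $\norm{z_m-y_{2m+1}}$ and $\norm{w_m-y_{2m+2}}$, and pass to both consecutive differences at the cost of the extra tails $2N\alpha_m$ and $2N\lambda_m$. The two routes are morally equivalent (firm nonexpansivity of $J^{\A}_{\beta}$ is monotonicity of $\A$ in disguise) and consume exactly the same quantitative inputs, namely $(Q_1)$, $(Q_2)$ at threshold $N_2$ and Lemma~\ref{l:asymreg1}\eqref{qrceven} at accuracy $8N(k+1)^2-1$, so the final bound $\eta_2$ comes out identical. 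Your margins are tighter than the paper's --- one gets roughly $\norm{w_m-y_{2m+2}}\le \sqrt{5}/(4(k+1))$ plus the $2N\lambda_m$ tail, and a division by $1-\lambda_m\ge 31/32$ on the other residual, versus the paper's clean $\norm{y_{2m+2}-y_{2m+1}}\le 1/(2(k+1))$ --- but the constants $32N^2(k+1)^2$ and $8N(k+1)^2$ do leave enough room, so the verification you flag as the ``main obstacle'' indeed goes through.
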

\begin{proof}
By the definition of $(y_n)$ we have
\begin{equation*}
\beta_{n+1}\A y_{2n+1} \ni \alpha_n \left( u-y_{2n}\right) + y_{2n}-y_{2n+1}
\end{equation*}
Then by the monotonicity of $\A$
\begin{equation*}
\ip{ y_{2n+1}-y_{2n}-\alpha_n \left( u-y_{2n}\right),y_{2n+1}-q} \leq 0
\end{equation*}
Now, the latter implies that
\begin{equation*}
\begin{split}
\ip{ y_{2n+1}-y_{2n+2},y_{2n+1}-q} &\leq  \ip{ \alpha_n \left( u-y_{2n}\right)-y_{2n+2}+y_{2n},y_{2n+1}-q}\\
& \leq \norm{y_{2n+1}-q}\left(\alpha_n \norm{u-y_{2n}}+\norm{y_{2n+2}-y_{2n}}\right)\\
& \leq 2N^2\alpha_n+ N\norm{y_{2n+2}-y_{2n}}
\end{split}
\end{equation*}
Similarly,
\begin{equation*}
\mu_{n+1}\B y_{2n+2} \ni \lambda_n \left( u-y_{2n+1}\right) + y_{2n+1}-y_{2n+2}
\end{equation*}
Then, by the monotonicity of $\B$
\begin{equation*}
\ip{ y_{2n+2}-y_{2n+1}-\lambda_n \left( u-y_{2n+1}\right),y_{2n+2}-q} \leq 0
\end{equation*}
The latter implies that
\begin{equation*}
\begin{split}
\ip{ y_{2n+2}-y_{2n+1},y_{2n+2}-q} &\leq  \ip{ \lambda_n \left( u-y_{2n+1}\right),y_{2n+2}-q}\\
& \leq \lambda_n\norm{u-y_{2n+1}}\norm{y_{2n+2}-q}\\
& \leq 2N^2\lambda_n
\end{split}
\end{equation*}
Let $k,f$ be given. By Lemma~\ref{l:asymreg1} there exists $n_0\leq \eta_1(8N(k+1)^2-1,\check{f})$ such that $\norm{y_{2n+2}-y_{2n}}\leq \frac{1}{8N(k+1)^2}$, for all $n \in [n_0,\check{f}(n_0)]$. Define $n_1:= \max\{n_0, N_2\}$.
We have, for $n \in [n_1,f(2n_1+1)] \subseteq [n_0, \check{f}(n_0)]$
\begin{equation*}
\begin{split}
\norm{y_{2n+2}-y_{2n+1}}^2 &= \ip{ y_{2n+2}-y_{2n+1},y_{2n+2}-q} +\ip{ y_{2n+1} -y_{2n+2}, y_{2n+1}-q}\\
& \leq 2N^2 (\alpha_n +\lambda_n) +N\norm{y_{2n+2}-y_{2n}}\\
& \leq 2N^2\left(\frac{1}{32N^2(k+1)^2}+\frac{1}{32N^2(k+1)^2}\right)+N\frac{1}{8N(k+1)^2} \leq \frac{1}{4(k+1)^2},
\end{split}
\end{equation*}
and then $\norm{y_{2n+2}-y_{2n+1}} \leq \frac{1}{2(k+1)}$. With $n_2:= 2n_1+1\leq \eta_2(k,f)$, let $n \in [n_2,f(n_2)]$. 
If $n=2m+1$, then $m \in [n_1,f(2n_1+1)]$ and 
\begin{equation*}
\norm{y_{n+1}-y_n}=\norm{y_{2m+2}-y_{2m+1}}\leq \frac{1}{2(k+1)}\leq \frac{1}{k+1}.
\end{equation*}
If $n=2m$, then  again $m \in [n_1,f(2n_1+1)]\subseteq [n_0, \check{f}(n_0)]$ and
\begin{equation*}
\norm{y_{n+1}-y_n}\leq \norm{y_{2m+1}-y_{2m+2}}+\norm{y_{2m+2}-y_{2m}}\leq \frac{1}{2(k+1)}+ \frac{1}{8N(k+1)^2}\leq\frac{1}{k+1},
\end{equation*}
and the result holds.
\end{proof}
\begin{remark}\label{r:eta'}
Lemma~\ref{l:asymreg2} entails that the condition $(Q_{\eta'})$ is satisfied with $\eta' = \eta_2$. 
\end{remark}
\begin{lemma}\label{l:asymreg3}
Let $N \in \N$ be such that $N \geq \max\{\norm{u-q}, \norm{x_0-q}, \norm{q}\}$, for some $q \in S$. Assume that there exist $R \in \N\setminus \{0\}$ and monotone functions $a,\ell, r_{\beta}, r_{\mu}: \N \to \N$ satisfying conditions $(Q_1), (Q_2), (Q_4), (Q_6)$ and  $(Q_7)$. Then
\begin{enumerate}[$(i)$]
\item $ \|J^\A(y_{2n+1})-y_{2n+1}\| \to 0$ with monotone quasi-rate of convergence $\eta_3(k,f):=\max\{\eta_2(4k+3,\widetilde{f}[N_3]),N_3\}$
\item $ \|J^\B(y_{2n+2})-y_{2n+2}\|\to 0$ with monotone quasi-rate of convergence $\eta_4(k,f):=\max\{\eta_2(4k+3,\widetilde{f}[N_4]), N_4\}$
\end{enumerate}
where $\eta_2$ is as in Lemma~\ref{l:asymreg2} and
\begin{enumerate}
\item[] $N_3:=a(8N(k+1)-1)$
\item[] $N_4:=\ell(8N(k+1)-1)$.
\end{enumerate}
\end{lemma}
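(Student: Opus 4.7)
The plan is to deduce each part from a bound of the shape ``resolvent deviation $\leq$ consecutive difference $+$ Halpern coefficient,'' and then control the first summand by Lemma~\ref{l:asymreg2} and the second by condition $(Q_1)$ (resp.\ $(Q_2)$). For part $(i)$, the defining identity $y_{2m+1} = J_{\beta_m}^{\A}(\alpha_m u + (1-\alpha_m) y_{2m})$, together with nonexpansivity of $J_{\beta_m}^{\A}$ and Lemma~\ref{l:vbounded}, will give
\begin{equation*}
\|J_{\beta_m}^{\A}(y_{2m+1}) - y_{2m+1}\| \leq \|y_{2m+1} - (\alpha_m u + (1-\alpha_m) y_{2m})\| \leq \|y_{2m+1} - y_{2m}\| + 2N\alpha_m.
\end{equation*}
Condition $(Q_4)$ then supplies $\tfrac{1}{R} \leq \beta_m$, so by Lemma~\ref{lemmaresolvineq},
\begin{equation*}
\|J^{\A}(y_{2m+1}) - y_{2m+1}\| \leq 2\|J_{\beta_m}^{\A}(y_{2m+1}) - y_{2m+1}\| \leq 2\|y_{2m+1} - y_{2m}\| + 4N\alpha_m.
\end{equation*}

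Next, I would assemble the quasi-rate as follows. Given $k \in \N$ and monotone $f$, each summand in the displayed bound should be at most $\tfrac{1}{2(k+1)}$. For the Halpern term, $(Q_1)$ forces $4N\alpha_m \leq \tfrac{1}{2(k+1)}$ as soon as $m \geq N_3 = a(8N(k+1)-1)$. For the consecutive-difference term, I would apply Lemma~\ref{l:asymreg2} with parameters $4k+3$ and $\widetilde{f}[N_3]$ to obtain $n_1 \leq \eta_2(4k+3,\widetilde{f}[N_3])$ for which $\|y_{i+1}-y_i\| \leq \tfrac{1}{4(k+1)}$ throughout $[n_1,\widetilde{f}[N_3](n_1)]$. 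Setting $n := \max\{n_1,N_3\} \leq \eta_3(k,f)$, the shift $\widetilde{f}[N_3](n_1) = f(\max\{n_1,N_3\}) = f(n)$ ensures that both inequalities hold on the index range needed, yielding $\|J^{\A}(y_{2m+1})-y_{2m+1}\| \leq \tfrac{1}{k+1}$ for $m$ in the target interval.

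Part $(ii)$ is entirely parallel: starting from $y_{2m+2} = J_{\mu_m}^{\B}(\lambda_m u + (1-\lambda_m)y_{2m+1})$, the same two steps (nonexpansivity and Lemma~\ref{lemmaresolvineq} via $(Q_4)$) produce the analogue
\begin{equation*}
\|J^{\B}(y_{2m+2}) - y_{2m+2}\| \leq 2\|y_{2m+2} - y_{2m+1}\| + 4N\lambda_m,
\end{equation*}
and the argument concludes via $(Q_2)$ using $N_4 = \ell(8N(k+1)-1)$ in place of $N_3$, again invoking $\eta_2$ from Lemma~\ref{l:asymreg2}. The main obstacle is bookkeeping: the consecutive difference supplied by $\eta_2$ is indexed generically, while the term we actually need controls $\|y_{2m+1}-y_{2m}\|$ (respectively $\|y_{2m+2}-y_{2m+1}\|$) at a specific parity-shifted index as $m$ runs through $[n,f(n)]$. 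One must exploit the monotonicity of $\eta_2$ and of $f$, together with the shift notation $\widetilde{\cdot}[N_3]$ (resp.\ $\widetilde{\cdot}[N_4]$), to confirm that the interval on which the consecutive differences are small covers all indices $2m$ (resp.\ $2m+1$) arising from $m\in[n,f(n)]$; this bookkeeping is what the formulas for $\eta_3$ and $\eta_4$ encode.
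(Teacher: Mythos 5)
Your derivation is essentially the paper's: the same pointwise bound $\norm{J^{\A}_{\beta_m}(y_{2m+1})-y_{2m+1}}\leq \norm{y_{2m+1}-y_{2m}}+2N\alpha_m$ (you reach it by comparing $y_{2m+1}=J^{\A}_{\beta_m}(\alpha_m u+(1-\alpha_m)y_{2m})$ with $J^{\A}_{\beta_m}(y_{2m+1})$ directly, while the paper inserts $J^{\A}_{\beta_m}(y_{2m})$ as an intermediate point -- same estimate), the same passage to $J^{\A}=J^{\A}_{R^{-1}}$ via $(Q_4)$ and Lemma~\ref{lemmaresolvineq}, and the same split of $\tfrac{1}{k+1}$ with $(Q_1)$ handling the $\alpha$-term past $N_3$ and $\eta_2$ handling the difference term. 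The one step you defer -- confirming that the interval on which the consecutive differences are small covers all indices $2m$ -- is the only non-routine point, and your assertion that ``this bookkeeping is what the formulas for $\eta_3$ and $\eta_4$ encode'' does not quite hold up: applying Lemma~\ref{l:asymreg2} with second argument $\widetilde{f}[N_3]$ yields $\norm{y_{j+1}-y_j}\leq\tfrac{1}{4(k+1)}$ only for $j\in[n_1,\widetilde{f}[N_3](n_1)]=[n_1,f(n)]$, whereas you need it for $j=2m$ with $m$ ranging up to $f(n)$, i.e.\ for $j$ as large as $2f(n)$. To close the bookkeeping one should feed $\eta_2$ the function $m\mapsto 2\widetilde{f}[N_3](m)$ (compare $\check{f}$ in Lemma~\ref{l:asymreg2} and $\widetilde{\eta}_2$ in Lemma~\ref{l:asymfinal}, where the factor $2$ is inserted for exactly this index-doubling reason); the paper's own proof of this lemma elides the same point. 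This is a constant-factor adjustment to $\eta_3$ and $\eta_4$ and affects neither the structure of the argument nor anything downstream.
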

\begin{proof}
For all $n \in \N$ we have,
\begin{equation}\label{e:JA}
\begin{split}
\|J^\A_{\beta_n}(y_{2n+1})-y_{2n+1}\|&\leq \|J^\A_{\beta_n}(y_{2n+1})-J^\A_{\beta_n}(y_{2n})\|+\|J^\A_{\beta_n}(y_{2n})-y_{2n+1}\|\\
&\leq \|y_{2n+1}-y_{2n}\|+\|J^\A_{\beta_n}(y_{2n})-J^\A_{\beta_n}( \alpha_nu+(1-\alpha_n)y_{2n} )\|\\
&\leq \|y_{2n+1}-y_{2n}\|+\alpha_n\|u-y_{2n}\|\\
& \leq \|y_{2n+1}-y_{2n}\|+2N\alpha_n.
\end{split}
\end{equation}
By Lemma~\ref{l:asymreg2} there exists $n_0 \leq \eta_2(4k+3,\widetilde{f}[N_3])$ such that $\|y_{2n+1}-y_{2n}\|\leq \frac{1}{4(k+1)}$, for all $n \in [n_0, \widetilde{f}[N_3](n_0)]$. With $n_1 := \max\{n_0, N_3\} \leq \eta_3(k,f)$, condition $(Q_4)$, Lemma~\ref{lemmaresolvineq} and \eqref{e:JA} entail that, for all $n \in [n_1,f(n_1)]$
\begin{equation*}
\|J^\A(y_{2n+1})-y_{2n+1}\|\leq 2\|J^\A_{\beta_n}(y_{2n+1})-y_{2n+1}\|\leq \frac{1}{k+1}.
\end{equation*}  
The proof of the second part of the lemma is similar using
\begin{align*}
\|J^\B_{\mu_n}(y_{2n+2})-y_{2n+2}\|&\leq \|J^\B_{\mu_n}(y_{2n+2})-J^\B_{\mu_n}(y_{2n+1})\|+\|J^\B_{\mu_n}(y_{2n+1})-y_{2n+2}\|\\
&\leq \|y_{2n+2}-y_{2n+1}\|+\|J^\B_{\mu_n}(y_{2n+1})-J^\B_{\mu_n}( \lambda_nu+(1-\lambda_n)y_{2n+1} )\|\\
&\leq \|y_{2n+2}-y_{2n+1}\|+\lambda_n\|u-y_{2n+1}\| \\
&\leq \|y_{2n+2}-y_{2n+1}\|+2N\lambda_n. \qedhere
\end{align*}
\end{proof}
From Lemmas~\ref{l:asymreg2} and \ref{l:asymreg3}, using Remark~\ref{r:metameta}, we obtain that the sequence $(y_n)$ is asymptotically regular with respect to both $J^{\A}$ and $J^{\B}$.
\begin{lemma}\label{l:asymfinal}
Let $N \in \N$ be such that $N \geq \max\{\norm{u-q}, \norm{x_0-q}, \norm{q}\}$, for some $q \in S$.  Assume that there exist $R \in \N\setminus \{0\}$ and monotone functions $a,\ell, r_{\beta}, r_{\mu}: \N \to \N$ satisfying conditions $(Q_1), (Q_2), (Q_4),(Q_6)$ and $(Q_7)$. Then
\begin{enumerate}[$(i)$]
\item $\norm{J^{A}(y_{n})-y_{n}} \to 0$, with monotone quasi-rate of convergence $\eta_5(k,f):= \Psi[\Phi[\widetilde\eta_2,\widetilde\eta_3]](k,f)$
\item $\norm{J^{B}(y_{n})-y_{n}} \to 0$, with monotone quasi-rate of convergence $\eta_6(k,f):=\Psi[\Phi[\widetilde\eta_2,\widetilde\eta_4]](k,f)$,
\end{enumerate}
where $\Phi$ is as in Remark~\ref{r:metameta}, $\Psi$ is as in Proposition~\ref{p:metameta2} and
\begin{enumerate}
\item[] $\widetilde\eta_2(k,f):=\eta_2(4k+3,2f)$
\item[] $\widetilde\eta_3(k,f):=\eta_3(2k+1,f)$
\item[] $\widetilde\eta_4(k,f):=\eta_4(2k+1,h_f)+1$
\item[] $h_f(n):=f(n+1)$.
\end{enumerate}
\end{lemma}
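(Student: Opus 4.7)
The plan is to combine the quasi-rates from Lemma~\ref{l:asymreg2} (for $\norm{y_{n+1}-y_n}\to 0$) and Lemma~\ref{l:asymreg3} (for asymptotic regularity on a single parity) into uniform asymptotic-regularity bounds, by means of the abstract operators $\Phi$ (Remark~\ref{r:metameta}) for conjunctions and $\Psi$ (Proposition~\ref{p:metameta2}) for the even/odd dichotomy. The crucial point is that Lemma~\ref{l:asymreg3} only controls $\norm{J^{\A}(y_n)-y_n}$ on odd indices and $\norm{J^{\B}(y_n)-y_n}$ on indices of the form $2n+2$; the missing parities will be recovered via nonexpansiveness of the resolvents together with asymptotic regularity of consecutive iterates.

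For part~$(i)$, I would handle the odd case $n=2m+1$ directly from Lemma~\ref{l:asymreg3}(i). For the even case $n=2m$, apply the triangle inequality and the nonexpansiveness of $J^{\A}$:
\begin{equation*}
\norm{J^{\A}(y_{2m})-y_{2m}} \leq \norm{J^{\A}(y_{2m})-J^{\A}(y_{2m+1})} + \norm{J^{\A}(y_{2m+1})-y_{2m+1}} + \norm{y_{2m+1}-y_{2m}} \leq 2\norm{y_{2m+1}-y_{2m}} + \norm{J^{\A}(y_{2m+1})-y_{2m+1}}.
\end{equation*}
To bound this by $\frac{1}{k+1}$ it is enough to enforce $\norm{y_{2m+1}-y_{2m}}\leq \frac{1}{4(k+1)}$ (from $\eta_2$ with parameter $4k+3$ and test function $2f$, which absorbs the reindexing $m \mapsto 2m$ ensuring that $\ell = 2m$ stays in the interval where Lemma~\ref{l:asymreg2} applies, giving $\widetilde\eta_2(k,f) = \eta_2(4k+3, 2f)$) and $\norm{J^{\A}(y_{2m+1})-y_{2m+1}}\leq \frac{1}{2(k+1)}$ (from $\eta_3$ with parameter $2k+1$, giving $\widetilde\eta_3(k,f) = \eta_3(2k+1, f)$); the latter bound $\frac{1}{2(k+1)}$ also suffices for the odd case. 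Forming the conjunction via $\Phi$ and then merging the two parities via $\Psi$ then yields $\eta_5 := \Psi[\Phi[\widetilde\eta_2, \widetilde\eta_3]]$.

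Part~$(ii)$ proceeds analogously but involves a parity shift: Lemma~\ref{l:asymreg3}(ii) controls $\norm{J^{\B}(y_{2n+2})-y_{2n+2}}$, so for the even-index case of the $\Psi$ discussion (where $m$ ranges over all indices, including $m=0$) I would reindex $n := m-1$; this accounts for the $+1$ in $\widetilde\eta_4(k,f) := \eta_4(2k+1, h_f)+1$ and for the shifted test function $h_f(n) := f(n+1)$, which ensures the shifted range $[n_0+1, f(n_0+1)]$ on the $m$-side lies inside the range $[n_0, h_f(n_0)]$ delivered by $\eta_4$. For the odd indices $n=2m+1$, the analogous triangle inequality
\begin{equation*}
\norm{J^{\B}(y_{2m+1})-y_{2m+1}} \leq 2\norm{y_{2m+2}-y_{2m+1}} + \norm{J^{\B}(y_{2m+2})-y_{2m+2}}
\end{equation*}
again reduces the estimate to $\eta_2$ and $\eta_4$. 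Combining by $\Phi$ and $\Psi$ delivers $\eta_6 := \Psi[\Phi[\widetilde\eta_2, \widetilde\eta_4]]$. The main obstacle is purely bookkeeping: one must check that the auxiliary choices $2f$, $h_f$, and the numerical factors $4k+3, 2k+1$ make the intervals produced by $\Phi$ simultaneously long enough for both terms in each triangle inequality to be under control for the same index~$m$; monotonicity of $\eta_2, \eta_3, \eta_4$ and the standard conversion between quasi-rates and metastable statements (as in the use of $\Phi$ throughout the section) handles this uniformly.
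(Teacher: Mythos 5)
Your proof matches the paper's: the same triangle inequalities via nonexpansiveness of the resolvents, the same inputs $\eta_2,\eta_3,\eta_4$, and the same combination via $\Phi$ and $\Psi$, with the index shift producing the $+1$ and $h_f$ in $\widetilde\eta_4$ handled exactly as in the paper. The only cosmetic difference is in the odd case of part $(ii)$: the paper estimates $\norm{J^{\B}(y_{2m+1})-y_{2m+1}}\leq 2\norm{y_{2m+1}-y_{2m}}+\norm{J^{\B}(y_{2m})-y_{2m}}$, going backward to the already-shifted even index (which fits the stated $\widetilde\eta_2(k,f)=\eta_2(4k+3,2f)$ without a further off-by-one), rather than forward to $y_{2m+2}$ as you do.
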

\begin{proof}
Since $J^\A$ is nonexpansive 
\begin{equation*}
\begin{split}
\norm{J^\A (y_{2n})-y_{2n}} &\leq \norm{J^\A (y_{2n})-J^\A (y_{2n+1})}+ \norm{J^\A (y_{2n+1})-y_{2n+1}}+\norm{y_{2n+1}-y_{2n}}\\
& \leq 2 \norm{y_{2n+1}-y_{2n}}+  \norm{J^\A (y_{2n+1})-y_{2n+1}}.
\end{split}
\end{equation*}
By Lemma~\ref{l:asymreg2} and Lemma~\ref{l:asymreg3} we may apply Remark~\ref{r:metameta} to conclude that for all $k \in \N$ and monotone $f:\N \to \N$ 
\begin{equation*}
\exists  n\leq \Phi[\widetilde\eta_2,\widetilde\eta_3](k,f) \, \forall m \in [n,f(n)] \left(\norm{J^\A (y_{2m})-y_{2m}}\leq \frac{1}{k+1} \wedge \norm{J^\A (y_{2m+1})-y_{2m+1}}\leq \frac{1}{k+1}\right).
\end{equation*}
The first part follows from Proposition~\ref{p:metameta2}.

The proof of the second part is analogous using
\begin{equation*}
\begin{split}
\norm{J^\B (y_{2n+1})-y_{2n+1}} &\leq \norm{J^\B (y_{2n+1})-J^\B (y_{2n})}+ \norm{J^\B (y_{2n})-y_{2n}}+\norm{y_{2n}-y_{2n+1}}\\
& \leq 2 \norm{y_{2n+1}-y_{2n}}+  \norm{J^\B (y_{2n})-y_{2n}}.\qedhere
\end{split}
\end{equation*}
\end{proof}
\begin{remark}\label{r:eta}
Using Remark~\ref{r:metameta}, we obtain from Lemma~\ref{l:asymfinal} that the function  $\eta(k,f):= \Phi[\eta_5,\eta_6](k,f)$ satisfies the condition $(Q_{\eta})$, i.e.\
\begin{equation*}\label{e:etasat}
\forall k \in \N\, \forall f \in \N \to \N \, \exists n \leq \eta(k,f) \, \forall m \in [n,f(n)] \left(\norm{J^{A}(y_{m})-y_{m}}\leq \frac{1}{k+1} \wedge  \norm{J^{B}(y_{m})-y_{m}} \leq \frac{1}{k+1} \right).
\end{equation*}
Then, using also Remark~\ref{r:eta'}, we may apply Corollary~\ref{c:quantBM} to obtain an effective metastability bound for \eqref{HPPA2} under the assumptions $(Q_1)-(Q_7)$.
\end{remark}
%
\subsection{Relation with Halpern's definition}\label{s:connecting}

In this subsection we argue that the algorithm \eqref{HPPA2} is essentially a Halpern-type iteration for two maximal monotone operators $\A',\B'$. 
Define for $n \in \N$
\begin{equation}\label{HPPAalt}\tag{\textsf{HPPA}$_2^{\star}$}
\begin{cases}
z_{2n+1}= \widetilde{\alpha}_n u+(1-\widetilde{\alpha}_n) J_{\widetilde{\beta}_n}^{\A'} \left(z_{2n}\right)\\ 
z_{2n+2}= \widetilde{\lambda}_n u+(1-\widetilde{\lambda}_n) J_{\widetilde{\mu}_n}^{\B'} \left(z_{2n+1}\right)
\end{cases}
\end{equation}
where $z_0 \in H$ is given, $(\widetilde{\alpha}_n), (\widetilde{\lambda}_n) \subset (0,1) $, $(\widetilde{\beta}_n),(\widetilde{\mu}_n) \subset (0,+\infty)$. 
\begin{proposition}\label{MarHalpern}
Let $(y_n)$ be generated by \eqref{HPPA2} with $y_n \to x \in H$. Consider $\A'=\A, \B'=\B$ and that $\widetilde{\alpha}_n=\lambda_n , \widetilde{\lambda}_n= \alpha_{n+1}, \widetilde{\beta}_n=\beta_n, \widetilde{\mu}_n = \mu_n$, for all $n \in \N$. If $\alpha_n, \lambda_n \to 0$, and $(z_n)$ is generated by \eqref{HPPAalt} with $z_0 = \alpha_0 u+(1-\alpha_0)y_0$, then $ z_n  \to x$. 

Moreover, given rates of convergence $a, \ell:\N \to \N$ for $\alpha_n \to 0$ and $\lambda_n \to 0$ (respectively), and $N \in \N$ such that $N \geq  \max\{\norm{u-q}, \norm{x_0-q}, \norm{q}\}$, for some $q \in S$, we have that $\norm{y_n-z_n} \to$ with rate of convergence $\gamma(k):=2\max\{\widetilde{a}(k),\widetilde{\ell}(k)\}+2$. If $\rho$ is a rate of metastability for $(y_n)$, then $\widetilde{\rho}$ is a rate of metastability for $(z_n)$, where  
$$\widetilde{\rho}(k,f):=\max\left \{\rho\left(2k+1,\widetilde{f}[\gamma(4k+3)]\right), \gamma(4k+3)\right\},$$ 
\begin{enumerate}
\item[]  $\widetilde{a}(k):=a(2N(k+1)-1)$
\item[] $\widetilde{\ell}(k):=\ell(2N(k+1)-1)$
\end{enumerate}
\end{proposition}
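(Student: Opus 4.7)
The plan is to identify $(z_n)$ as an explicit small perturbation of $(y_n)$, after which all three assertions follow from the triangle inequality and the given parameter rates. The key observation, which I would verify by induction on $n$, is the pointwise identity
$$z_{2n} = \alpha_n u + (1-\alpha_n) y_{2n}, \qquad z_{2n+1} = \lambda_n u + (1-\lambda_n) y_{2n+1}.$$
The base case is the definition of $z_0$. For the induction step, the even identity yields $J^{\A}_{\beta_n}(z_{2n}) = J^{\A}_{\beta_n}(\alpha_n u + (1-\alpha_n) y_{2n}) = y_{2n+1}$, and substituting into the \eqref{HPPAalt} recursion (with $\widetilde{\alpha}_n = \lambda_n$) gives the odd identity. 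An analogous computation, using $\widetilde{\lambda}_n = \alpha_{n+1}$ and applying $J^{\B}_{\mu_n}$, advances the even case.

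Once this identification is in place, Lemma~\ref{l:vbounded} together with $N \geq \norm{u-q}$ gives $\norm{u - y_n} \leq 2N$ for every $n$, so
$$\norm{z_{2n} - y_{2n}} \leq 2N\alpha_n, \qquad \norm{z_{2n+1} - y_{2n+1}} \leq 2N\lambda_n.$$
Strong convergence $z_n \to x$ is then immediate from $y_n \to x$ and $\alpha_n, \lambda_n \to 0$. For the rate, observe that $\alpha_m \leq \frac{1}{2N(k+1)}$ whenever $m \geq \widetilde{a}(k)$, with the analogous statement for $\lambda_m$ and $\widetilde{\ell}(k)$; splitting by parity of $n$ shows $\norm{z_n - y_n} \leq \frac{1}{k+1}$ for every $n \geq \gamma(k)$.

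For the metastability claim, given $k$ and monotone $f$, I would apply the hypothesized rate $\rho$ to the pair $\bigl(2k+1,\widetilde{f}[\gamma(4k+3)]\bigr)$, producing some $n_0 \leq \rho\bigl(2k+1,\widetilde{f}[\gamma(4k+3)]\bigr)$ with $\norm{y_i - y_j} \leq \frac{1}{2(k+1)}$ throughout $[n_0,\widetilde{f}[\gamma(4k+3)](n_0)]$. Setting $n := \max\{n_0,\gamma(4k+3)\} \leq \widetilde{\rho}(k,f)$, the monotonicity of $f$ gives $[n,f(n)] \subseteq [n_0,\widetilde{f}[\gamma(4k+3)](n_0)]$, so the $(y_n)$-bound applies on this window; moreover every index in $[n,f(n)]$ exceeds $\gamma(4k+3)$, so both $\norm{z_i - y_i}$ and $\norm{z_j - y_j}$ are at most $\frac{1}{4(k+1)}$. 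A triangle inequality then yields $\norm{z_i - z_j} \leq \frac{1}{k+1}$.

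The only real subtlety is spotting the clean closed-form relation between $z_n$ and $y_n$; everything downstream is routine bookkeeping with the triangle inequality and the rates for $\alpha_n, \lambda_n \to 0$.
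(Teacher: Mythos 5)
Your proposal is correct and follows essentially the same route as the paper: the same inductive identification $z_{2n+1}=\lambda_n u+(1-\lambda_n)y_{2n+1}$, $z_{2n+2}=\alpha_{n+1}u+(1-\alpha_{n+1})y_{2n+2}$ (you state it with the index shifted to $z_{2n}$, which is equivalent given the base case), the same $2N$-bound from Lemma~\ref{l:vbounded}, and the same triangle-inequality transfer of metastability. No gaps.
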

\begin{proof}
By induction, $y_{2n+1}=J^{\A'}_{\widetilde{\beta}_n}(z_{2n})$ and $y_{2n+2}=J^{\B'}_{\widetilde{\mu}_n}(z_{2n+1})$, for all $n \in \N$. Hence 
\begin{equation}\label{e:znyn}
\begin{cases}
z_{2n+1}=\widetilde{\alpha}_nu+(1-\widetilde{\alpha}_n)y_{2n+1}\\
z_{2n+2}=\widetilde{\lambda}_nu+(1-\widetilde{\lambda}_n)y_{2n+2}
\end{cases}
\end{equation}
Since $\widetilde{\alpha}_n=\lambda_n \to 0$ and $\widetilde{\lambda}_n= \alpha_{n+1} \to 0$, we conclude $z_n \to x$.
Now let $k, f$ be given. From \eqref{e:znyn}, we have that $\norm{z_{2n+1}-y_{2n+1}} \to 0$ with rate of convergence $\widetilde{\ell}$ and $\norm{z_{2n+2}-y_{2n+2}} \to 0$ with rate of convergence $\widetilde{a}$. This shows that $\gamma$ is a rate of convergence for $\norm{z_n -y_n} \to 0$. There exists $n_0 \leq \rho(2k+1,\widetilde{f}[\gamma(4k+3)])$ such that 
\begin{equation*}
\forall i,j \in [n_0,\widetilde{f}[\gamma(4k+3)](n_0)] \left(\norm{y_i-y_j} \leq \frac{1}{2(k+1)}\right).
\end{equation*}
The result follows with $n:=\max\{n_0, \gamma(4k+3)\} \leq \widetilde{\rho}(k,f)$ by the triangle inequality.
\end{proof}
\begin{proposition}\label{MarHalpern2}
Let $(z_n)$ be generated by \eqref{HPPAalt} with $z_n \to x \in H$. Consider $\A=\B', \B=\A'$ and that $\alpha_n=\widetilde{\alpha}_n, \lambda_n=\widetilde{\lambda}_n, \beta_n=\widetilde{\mu}_n, \mu_n=\widetilde{\beta}_{n+1}$, for all $n \in \N$. If $\widetilde{\alpha}_n, \widetilde{\lambda}_n \to 0$, and $(y_n)$ is generated by \eqref{HPPA2} with $y_0 = J^{\A'}_{\widetilde{\beta}_0}(z_0)$, then $ y_n \to x$. 

Moreover, given rates of convergence $a, \ell:\N \to \N$ for $\widetilde{\alpha}_n \to 0$ and $\widetilde{\lambda}_n \to 0$ (respectively), and $N \in \N$ such that $N \geq  \max\{\norm{u-q}, \norm{x_0-q}, \norm{q}\}$, for some $q \in S$, we have that $\norm{z_{n+1}-y_n} \to 0$ with rate of convergence $\gamma(k):=2\max\{\widetilde{a}(k),\widetilde{\ell}(k)\}+1$. If $\rho$ is a rate of metastability for $(z_n)$, then $\widetilde{\rho}$ is a rate of metastability for $(y_n)$, where 
$$\widetilde{\rho}(k,f):=\max\left \{\rho\left(2k+1,\widetilde{f}[\gamma(4k+3)]+1\right), \gamma(4k+3)\right\},$$ 
with $\widetilde{a}, \widetilde{\ell}$ defined as in Proposition~\ref{MarHalpern}.
%
\end{proposition}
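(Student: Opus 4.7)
The plan is to mirror the strategy of Proposition~\ref{MarHalpern}, starting from an induction that identifies the alternating resolvent iterates $(y_n)$ with the resolvents of the Halpern-type iterates $(z_n)$. Concretely, I would first prove by induction on $n$ that
$$y_{2n}=J^{\A'}_{\widetilde{\beta}_n}(z_{2n}) \quad \text{and}\quad  y_{2n+1}=J^{\B'}_{\widetilde{\mu}_n}(z_{2n+1}), \quad \text{for all } n\in\N.$$
The base case is precisely the assumption $y_0 = J^{\A'}_{\widetilde{\beta}_0}(z_0)$. The inductive step uses the parameter matching $\A=\B'$, $\B=\A'$, $\beta_n=\widetilde{\mu}_n$, $\mu_n=\widetilde{\beta}_{n+1}$ to rewrite each application of $J^\A_{\beta_n}$ (respectively $J^\B_{\mu_n}$) appearing in \eqref{HPPA2} as the corresponding resolvent occurring in \eqref{HPPAalt}, after which the induction hypothesis and the defining recurrences close the step.

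From this identification, $z_{n+1}-y_n$ reduces to either $\widetilde{\alpha}_m(u-y_{2m})$ when $n=2m$, or $\widetilde{\lambda}_m(u-y_{2m+1})$ when $n=2m+1$, since in each case one factor of $z_{n+1}$ is exactly $y_n$. Lemma~\ref{l:vbounded} then supplies the bound $\norm{u-y_n}\leq 2N$, so splitting by parity and invoking the rates $a$ and $\ell$ yields $\norm{z_{n+1}-y_n}\leq\tfrac{1}{k+1}$ as soon as $n\geq\gamma(k)=2\max\{\widetilde{a}(k),\widetilde{\ell}(k)\}+1$. This both justifies the strong convergence $y_n\to x$ (combining with $z_n\to x$) and delivers the claimed quantitative rate of convergence for $\norm{z_{n+1}-y_n}\to 0$.

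For the rate of metastability, given $k\in\N$ and monotone $f:\N\to\N$, the plan is to apply $\rho$ at arguments $2k+1$ and $g:=\widetilde{f}[\gamma(4k+3)]+1$, producing $n_0\leq\rho(2k+1,g)$ with $\norm{z_{i'}-z_{j'}}\leq\tfrac{1}{2(k+1)}$ for all $i',j'\in[n_0,g(n_0)]$. Setting $n:=\max\{n_0,\gamma(4k+3)\}\leq\widetilde{\rho}(k,f)$, the point is that for $i,j\in[n,f(n)]$ the shifted indices $i+1,j+1$ lie in $[n_0,g(n_0)]$, using that $\widetilde{f}[\gamma(4k+3)](n_0)=f(n)$ because $n\geq\gamma(4k+3)$. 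The triangle inequality
$$\norm{y_i-y_j}\leq\norm{y_i-z_{i+1}}+\norm{z_{i+1}-z_{j+1}}+\norm{z_{j+1}-y_j}\leq\frac{1}{4(k+1)}+\frac{1}{2(k+1)}+\frac{1}{4(k+1)}=\frac{1}{k+1}$$
then delivers the desired bound, where the outer terms are controlled by $n\geq\gamma(4k+3)$ via the previous paragraph.

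No step presents a genuine obstacle beyond bookkeeping; the only subtlety distinguishing this argument from Proposition~\ref{MarHalpern} is the off-by-one shift in $\norm{z_{n+1}-y_n}\to 0$, which is exactly what forces the extra ``$+1$'' inside the second argument $\widetilde{f}[\gamma(4k+3)]+1$ when invoking $\rho$, and pushes $\gamma$ up by one relative to its analogue in Proposition~\ref{MarHalpern}.
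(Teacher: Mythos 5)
Your proposal is correct and follows essentially the same route as the paper: the same inductive identification (the paper phrases it as $z_{2n+1}=\alpha_n u+(1-\alpha_n)y_{2n}$ and $z_{2n+2}=\lambda_n u+(1-\lambda_n)y_{2n+1}$, which is exactly what your resolvent identities yield), the same parity split giving the rate $\gamma$, and the same index-shift argument with $\rho(2k+1,\widetilde{f}[\gamma(4k+3)]+1)$ and the triangle inequality. No gaps.
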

\begin{proof}
By induction, for all $n \in \N$ 
\begin{equation}\label{e:znyn2}
\begin{cases}
z_{2n+1}=\alpha_nu+(1-\alpha_n)y_{2n}\\
z_{2n+2}=\lambda_nu+(1-\lambda_n)y_{2n+1}
\end{cases}
\end{equation}
which implies $y_n \to x$.
Now let $k, f$ be given. From \eqref{e:znyn2}, we have that $\norm{z_{2n+1}-y_{2n}} \to 0$ with rate of convergence $\widetilde{a}$, and $\norm{z_{2n+2}-y_{2n+1}} \to 0$ with  rate of convergence $\widetilde{\ell}$. This shows that $\gamma$ is a rate of convergence for $\norm{z_{n+1} -y_n} \to 0$. There exists $n_0 \leq \rho(2k+1,\widetilde{f}[\gamma(4k+3)]+1)$ such that 
\begin{equation*}
\forall i,j \in [n_0,\widetilde{f}[\gamma(4k+3)](n_0)+1] \left(\norm{z_i-z_j} \leq \frac{1}{2(k+1)}\right),
\end{equation*}
which entails $\norm{z_{i+1}-z_{j+1}} \leq \dfrac{1}{2(k+1)}$, for $i,j \in  [n_0,\widetilde{f}[\gamma(4k+3)](n_0)]$. Again the result follows by the triangle inequality, with $n:=\max\{n_0, \rho'(4k+3)\} \leq \widetilde{\rho}(k,f)$.
\end{proof}
\begin{remark}\label{r:rar}
\begin{enumerate}[$(i)$]
\item In the first part of Proposition~\ref{MarHalpern2}, if $p \in S':=\A'^{-1}(0) \cap \B'^{-1}(0)$, then we can drop the assumptions $\widetilde{\alpha}_n,\widetilde{\lambda}_n \to 0$. Indeed, from \eqref{e:znyn2}, one has $y_{2n+1}=J^{\A}_{\beta_n}(z_{2n+1})$ and  $y_{2n+2}=J^{\B}_{\mu_n}(z_{2n+2})$, for all $n \in \N$. Since $p \in S'$ and the resolvent functions are nonexpansive,
\begin{equation*}
\norm{y_{2n+1}-x}\leq \norm{z_{2n+1}-x} \quad \mbox{and} \quad \norm{y_{2n+2}-x}\leq \norm{z_{2n+2}-x},
\end{equation*}
from which we conclude $y_n \to x$. Moreover, a (quasi-)rate of convergence for $z_n \to x$ easily gives rise to a (quasi-)rate of convergence for $y_n \to x$. 
\item Let $r$ be a positive real number. 
In Proposition~\ref{MarHalpern}, if $\rho$ is a quasi-rate of asymptotic regularity for $(y_n)$ w.r.t.\ $J^\A_{r}$ and $J^\B_{r}$, i.e.\ a quasi-rate of convergence for $\max\{ \norm{y_n-J^\A_{r}(y_n)}, \norm{y_n-J^\B_{r}(y_n)}\} \to 0$, then $\widetilde{\rho}$ is a quasi-rate of asymptotic regularity for $(z_n)$, using the inequality $$\norm{z_n-J_{r}(z_n)} \leq 2 \norm{z_{n}-y_{n}}+ \norm{y_n-J_{r}(y_n)},$$ where $J_{r}$ denotes either $J^\A_{r}$ or $J^\B_{r}$.
\item Similarly to the previous point, in Proposition~\ref{MarHalpern2} the same transformations yield  a quasi-rate of asymptotic regularity for $(y_n)$ from a quasi-rate of asymptotic regularity for $(z_n)$, now using the inequality $\norm{y_n-J_{r}(y_n)} \leq 2 \norm{z_{n+1}-y_{n}}+ \norm{z_{n+1}-J_{r}(z_{n+1})}$. 
\item The previous arguments actually entail that if $\rho$ is a Cauchy rate (resp. rate of asymptotic regularity) for one iteration, then $\widetilde{\rho}$ is also a Cauchy rate (resp. rate of asymptotic regularity)  for the other iteration. 
\item In light of Proposition~\ref{MarHalpern} (and point (ii) in this remark), the extraction of metastability rates and quasi-rates of asymptotic regularity for \eqref{HPPA2}, carried out in this paper, entail similar quantitative information for \eqref{HPPAalt}. 
\end{enumerate} 
\end{remark}
\section{Improving the metastability for \texorpdfstring{\eqref{HPPA2}}{MAR*}}\label{s:second}

In a follow up paper \cite{BM(12)} Boikanyo and Moro\c{s}anu improved Theorem~\ref{t:BM_exact} by removing conditions ($C_4$) and ($C_5$).  In this section we give an analysis of the improved result. In order to obtain a quantitative version of this improved result, we work with conditions $(Q_1)-(Q_4)$ and, additionally, assume the existence of a monotone function $\mathcal{P}:\N \to  \N\setminus \{0\}$ satisfying 
\begin{equation}\tag{$Q_8$}
\forall n \in \N \left(\alpha_n+\lambda_n \geq \frac{1}{\mathcal{P}(n)}\right).
\end{equation}
From a logical point of view, the proof is now more involved since it requires a discussion by cases depending on whether the sequence defined by $s_n^x:=\norm{y_{2n}-x}^2$, for $x \in B_N$ is eventually decreasing or not and uses a (quantitative version of a) lemma due to Maingé \cite[Lemma~3.1]{M(08)}. The latter is often used in results which relax the conditions on the parameters of the resolvent functions.
The first quantitative analyses using Maingé's result were developed in \cite{DP(21),Koh(ta)}.

Throughout this section we fix $a,\ell,A, R$ and $\mathcal{P}$ satisfying conditions $(Q_1) - (Q_4)$ and $(Q_8)$ respectively, and $N \in \N$.
Let us start by defining the following constants and functions, which are useful for our analysis.
\begin{definition}\label{d:	functions}
Given $k \in \N$, we define the following constants
\begin{enumerate}[$(i)$]
\item (cf.\ Corollary~\ref{c:metatoasymp}) $M_0:=\max\{a(16N(k+1)-1),\ell(16N(k+1)-1)\}$
\item (cf.\ Lemma~\ref{l:conseqsmall}) $M_1:=\max\{a((8N(k+1))^2)-1, \ell(3(4N(k+1))^2)-1\}$
\item (cf.\ Lemma~\ref{l:asymregcase1}) $M_2:=\max\{a((64N(k+1))^2)-1, \ell(3(32N(k+1))^2)-1\}$
\end{enumerate}
\end{definition}
\begin{definition}\label{d:	functions}
	We define the following functions 
	\begin{enumerate}[$(i)$]
	\item  (cf. Lemma~\ref{l:metadifference}) Given $D,k, n \in \N$ and $f:\N \to \N$
\begin{enumerate}
	 \item $\phi_1[D](k,n,f):=f(\phi_2[D](k,n,f))+1$
	 \item $\phi_2[D](k,n,f):= \max\{n, (f+1)^{(D(k+1))}(n)\}$
\end{enumerate}
	\item (cf. Lemma~\ref{l:conseqsmall}) Given $k, n \in \N$ and $f:\N \to \N$
\begin{enumerate}
	 \item $\Psi_1(k,n,f):=\phi_1[4N^2](2(k+1)^2-1,n,\widetilde{{f}}[M_1])$
	 \item $\Psi_2(k,n,f):=\max\{\phi_2[4N^2](2(k+1)^2-1,n,\widetilde{f}[M_1]),M_1\}$
\end{enumerate}
	\item (cf. Lemma~\ref{l:asymregcase1}) Given $k, n \in \N$ and $f:\N \to \N$
\begin{enumerate}
	 \item $\Psi_3(k,n,f):= \phi_1[4N^2](128(k+1)^2-1,n,\widetilde{{f}}[M_2])$
	 \item $\Psi_4(k,n,f):=\max\{\phi_2[4N^2](128(k+1)^2-1,n,\widetilde{f}[M_2]),M_2\}$
\end{enumerate}
	 \item  (cf. Lemma~\ref{l:case1}) Let $\sigma_1:=\sigma_1[A,4N^2]$ be as in Lemma~\ref{l:Xuquant1}. Given $k, n \in \N$, $f:\N \to \N$  
\begin{enumerate}
	\item $j_{f}[k,f](n):=f(2\sigma_1(\widetilde{k},n)+1)$, with $\widetilde{k}\equiv16(k+1)^2-1$
	\item $\Psi_5(k,n,f):= \Psi_3(\max\{k,n\},n,j_f[k,f])$
	\item $\Psi_6(k,n,f):=2\sigma_1(\widetilde{k},\widetilde{n})+1$, with $\widetilde{n}\equiv \Psi_4(\max\{k,n\},n,j_f[k,f])$ and $\widetilde{k}$ as in (a)
	\item $\xi_1[k,f](n):= 16\cdot64N(k+1)^2(f(\Psi_6(k,n,f))+1)-1$
\end{enumerate}
	 \item  (cf. Lemma~\ref{lemmamaincase2}) Given $k, n \in \N$ and $f:\N \to \N$
\begin{enumerate}
	\item $\xi_2[k,f](n):=16\cdot96N\mathcal{P}(f(2n+1))(k+1)^2-1$
	\item $\mathfrak{m}(n):=\max\{a(16\cdot64N^2n-1), \ell(12\cdot64N^2n-1)\}$
 	\item $r(n,k):=\max\{\mathfrak{m}(2(n+1)^2), \mathfrak{m}((k+1)^2)\}$
\end{enumerate}
	 \item (cf. Theorem~\ref{t:quantBM2}) Let $\zeta=\zeta[N]$ be as in Lemma~\ref{l:projection}. For all $k, n\in \N$ and $f:\N \to \N$
\begin{enumerate}
	\item $r^{+}(n,k):=\max\{ r(n,k), n\}$
	\item $\Xi_1[k,f](n):=\xi_1[k,f](r^{+}(n,k))$
	\item $\Xi_2[k,f](n):=\xi_2[k,f](\Psi_5(k, r^{+}(n,k),f))$
	\item $\Xi(n):=\Xi[k,f](n):=\Xi[k,f](n):=\max\{\Xi_1[k,f](n), \Xi_2[k,f](n)\}$
	\item $\mu(k,f):= \max \{\Psi_6(k,r^{+}(\zeta(\overline{k},\Xi),k),f), 2\Psi_5(k, r^{+}(\zeta(\overline{k},\Xi), k),f)+1\}$, with $\overline{k}=3\cdot 96(k+1)^2-1$.
\end{enumerate}
	\end{enumerate}
\end{definition}
We are now able to formulate the main result of this section.
\begin{theorem}\label{t:quantBM2}
For $x_0, u \in H $, let $(y_n)$ be generated by \eqref{HPPA2e} (with $y_0=x_0$). Consider $N \in\N \setminus \{0\}$ such that $N \geq \max\{ 2\norm{u-q}, \norm{x_0-q}, \norm{q}\}$, for some $q \in S$. Then, for all $k \in \N$ and monotone function $f:\N\to \N$
\begin{equation*}
\exists n \leq \mu(k,f)\, \forall i,j\in [n,f(n)] \left(\norm{y_{i}-y_j}\leq \frac{1}{k+1} \right).
\end{equation*}
%
%
\end{theorem}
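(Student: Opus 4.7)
The plan is to follow the order imposed by the auxiliary definitions that precede the theorem. First I would invoke Lemma~\ref{l:projection} with the parameters $\overline{k}=3\cdot 96(k+1)^2-1$ and $\Xi[k,f]$ to obtain a point $x\in B_N$ and an index $n_0\leq \zeta(\overline{k},\Xi)$ with three properties: approximate fixedness of $x$ under both $J^{\A}$ and $J^{\B}$ (at level $1/(\Xi(n_0)+1)$), and the approximate variational inequality $\langle u-x,y-x\rangle\leq 1/(3\cdot 96(k+1)^2)$ for every $y\in B_N$ that is sufficiently close to a zero of both operators. This replaces the sequential-weak-compactness argument in the original proof of \cite{BM(12)}.

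Once $x$ is fixed, I would follow the Boikanyo--Moro\c{s}anu strategy and split on the behaviour of the real sequence $s_n^x:=\|y_{2n}-x\|^2$ by a quantitative version of Mainge's lemma. In \textbf{Case 1}, where $(s_n^x)$ is essentially non-increasing on the relevant window, the definitions of $y_{2n+1}$ and $y_{2n+2}$ combined with Lemma~\ref{l:monotone0} (applied using the approximate zero property of $x$ and the bound $(Q_5)$-free quantity $\mathcal{P}$) yield an inequality of the form
\[
s_{m+1}^x\leq (1-\alpha_m)(1-\lambda_m)(s_m^x+v_m)+\alpha_m b_m+\lambda_m c_m
\]
with $v_m,b_m,c_m$ controlled by the variational inequality above. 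Lemma~\ref{l:Xuquant1} (whence the function $\sigma_1$ in the definition of $\Psi_5,\Psi_6$) then forces $s_m^x\leq 1/(k+1)^2$ on the target window; this is the content referenced by Lemma~\ref{l:case1}, producing the bound $\Psi_6$. In \textbf{Case 2}, the non-monotonicity of $(s_n^x)$ is exploited via Mainge's argument to extract, within the relevant window, an index at which $s_{n+1}^x\leq s_n^x$ and $\|y_{2n+1}-y_{2n}\|$, $\|y_{2n+2}-y_{2n+1}\|$ are small; here condition $(Q_8)$ (through $\mathcal{P}$) is essential, and Lemma~\ref{lemmamaincase2} yields the bound involving $\xi_2$ and the auxiliary $r$. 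The function $r^{+}$ and the two pieces $\Xi_1,\Xi_2$ are precisely what is needed to absorb both sub-cases uniformly inside the argument of $\zeta$ in step one.

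Finally, taking $n\leq \mu(k,f)$ from the maximum of the two case bounds produces $x\in B_N$ with $\|y_{2m}-x\|\leq 1/(2(k+1))$ on $[n,f(n)]$, at which point the even/odd bridging and asymptotic regularity packaged in $\Psi_5$ (via Lemma~\ref{l:asymregcase1} and Lemma~\ref{l:conseqsmall}) upgrade this to $\|y_{2m+1}-x\|\leq 1/(2(k+1))$ on the same window. The transfer to all indices (whence the ``$2\cdot+1$'' appearing inside $\mu$) is handled by Proposition~\ref{p:metameta2}, and the triangle inequality closes the metastability statement.

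The main obstacle will be the quantitative implementation of Mainge's dichotomy in Case 2: one must explicitly produce, from the negation of Case 1 on a window of length $f(n)-n$, a concrete ``drop'' index at which the sequence decreases and where the iteration increments are controlled in terms of $\mathcal{P}$. The bookkeeping needed to align this bad index with the asymptotic regularity bounds, and in turn with the parameters of Lemma~\ref{l:projection}, is what forces the nested definitions of $j_f$, $r$, $r^{+}$, $\widetilde{f}[M_i]$, and $\phi_1,\phi_2$; threading them through so that both cases fall within $\zeta(\overline{k},\Xi)$ (and so the three conclusions of Lemma~\ref{l:projection} kick in simultaneously) is the delicate point. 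The rest of the argument is routine propagation of quantitative information from the cited lemmas.
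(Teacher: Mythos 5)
Your proposal is correct and follows essentially the same route as the paper: a single application of Lemma~\ref{l:projection} with the composite function $\Xi$ (so that both $\Xi_1$ and $\Xi_2$ are available for whichever case occurs), the Maing\'e-style dichotomy on $s_n^x$ handled by Lemma~\ref{l:case1} and Lemma~\ref{lemmamaincase2} respectively, and the triangle inequality to conclude. The only cosmetic difference is that the paper performs the even/odd index bridging inside the two case lemmas (whose conclusions already bound $\norm{y_i-x}$ for all parities, which is where the ``$2\cdot{}+1$'' in $\mu$ originates) rather than via a separate appeal to Proposition~\ref{p:metameta2} at the end.
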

From the metastability property of Theorem~\ref{t:quantBM2} it is possible to show the following asymptotic regularity result. We write $J^\A:=J^\A_{R^{-1}}$ and $J^\B:=J^\B_{R^{-1}}$.

\begin{corollary}\label{c:metatoasymp}
The sequence $(y_n)$ is asymptotically regular w.r.t.\ $J^{\A}$ and $J^{\B}$,
with quasi-rate of asymptotic regularity $\vartheta(k,f):=2\max\{\mu(8k+7,2\widetilde{g_{f}}[M_0]+2),M_0\}+1$.
\end{corollary}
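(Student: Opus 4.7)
The plan is to reduce the desired asymptotic regularity of $(y_n)$ with respect to $J^\A$ and $J^\B$ to bounds on the consecutive differences $\norm{y_{j+1}-y_j}$, which will then be supplied by the metastability rate $\mu$ of Theorem~\ref{t:quantBM2}. The first step is to unfold the iteration \eqref{HPPA2}: since $y_{2p+1} = J^\A_{\beta_p}(\alpha_p u + (1-\alpha_p) y_{2p})$, nonexpansiveness of the resolvent yields
\[\norm{J^\A_{\beta_p}(y_{2p}) - y_{2p+1}} \leq \alpha_p \norm{u - y_{2p}} \leq 2N\alpha_p,\]
and combining with the triangle inequality and Lemma~\ref{lemmaresolvineq} (which applies thanks to $(Q_4)$) upgrades this to
\[\norm{J^\A(y_{2p}) - y_{2p}} \leq 4N\alpha_p + 2\norm{y_{2p+1} - y_{2p}}.\]
A mirror-image argument handles $\norm{J^\B(y_{2p+1}) - y_{2p+1}}$ in terms of $\lambda_p$ and $\norm{y_{2p+2} - y_{2p+1}}$. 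The two ``wrong parity'' estimates $\norm{J^\A(y_{2p+1}) - y_{2p+1}}$ and $\norm{J^\B(y_{2p}) - y_{2p}}$ then follow by one extra triangle inequality step together with the nonexpansiveness of $J^\A$ and $J^\B$, at the cost of one more $\norm{y_{2p+1}-y_{2p}}$ term in each bound.

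Given $k$ and monotone $f$, the next step is to make all $\alpha_p$- and $\lambda_p$-terms above at most $\frac{1}{4(k+1)}$ by restricting to $p \geq M_0$; the definition of $M_0$ in Definition~\ref{d:	functions}(i) is precisely calibrated through the rates $a, \ell$ for this. To control the consecutive differences $\norm{y_{j+1}-y_j}$ themselves, I would apply Theorem~\ref{t:quantBM2} with parameter $8k+7$ and the test function $g(m) := 2\widetilde{g_f}[M_0](m) + 2$, producing some $n_1 \leq \mu(8k+7, g)$ for which $\norm{y_i - y_j} \leq \frac{1}{8(k+1)}$ whenever $i, j \in [n_1, g(n_1)]$. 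Setting $n := 2\max\{n_1, M_0\} + 1$, which is bounded by $\vartheta(k,f)$, one checks that any $m \in [n, f(n)]$ satisfies $\lfloor m/2 \rfloor \geq M_0$ and that the indices $m, m+1, m+2$ all lie in $[n_1, g(n_1)]$. Plugging these facts into the four pointwise estimates from the previous paragraph and a quick case split on the parity of $m$ gives both $\norm{J^\A(y_m)-y_m} \leq \frac{1}{k+1}$ and $\norm{J^\B(y_m)-y_m} \leq \frac{1}{k+1}$, as required.

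The main obstacle is the even/odd bookkeeping in the construction of $g$: because the ``wrong parity'' estimates need access to $y_{m+2}$ (not just $y_{m+1}$), the function supplied to metastability must cover approximately twice the range of $f$, shifted so as to start at an odd index at least $2M_0+1$. This is exactly what the formula $g = 2\widetilde{g_f}[M_0]+2$, combined with the outer $2(\cdot)+1$ in $\vartheta$, is engineered to provide.
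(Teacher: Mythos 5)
Your proposal is correct and follows essentially the same route as the paper: apply Theorem~\ref{t:quantBM2} with parameter $8k+7$ and test function $2\widetilde{g_f}[M_0]+2$ to control consecutive differences, use $M_0$ to make the $\alpha$- and $\lambda$-contributions small, pass from $J^\A_{\beta_i}, J^\B_{\mu_i}$ to $J^\A, J^\B$ via Lemma~\ref{lemmaresolvineq} and $(Q_4)$, and finish with the parity split at $n=2\max\{n_1,M_0\}+1$. The only (harmless) imprecision is that for odd $m=2i+1$ the indices actually needed in the metastability window are $2i,2i+1,2i+2=m-1,m,m+1$ rather than $m,m+1,m+2$, but these lie in $[n_1,g(n_1)]$ as well, so the argument goes through unchanged.
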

\begin{proof}
Let  $k \in \N$ and monotone $f:\N \to \N$ be given. By Theorem~\ref{t:quantBM2}, there exists $n_0 \leq \mu(8k+7,2\widetilde{g_{f}}[M_0]+2)$ such that 
\begin{equation*}
\forall i,j \in [n_0,2\widetilde{g_f}[M_0](n_0)+2] \left(\norm{y_{i}-y_j} \leq \frac{1}{8(k+1)}\right).
\end{equation*}
Define $n_1:=\max\{n_0,M_0\}$. For all $i \in [n_1,g_f(n_1)]$, we have $2i,2i+1,2i+2 \in [n_0,2\widetilde{g_f}[M_0](n_0)+2]$ and then 
\begin{equation*}
\norm{y_{2i+2}-y_{2i+1}} \leq \frac{1}{8(k+1)} \quad \mbox{and} \quad \norm{y_{2i+1}-y_{2i}} \leq \frac{1}{8(k+1)}.
\end{equation*}
From the definition of \eqref{HPPA2}, using $\norm{y_{2i}-u}\leq 2N$ and the fact that  $J^\A_{\beta_i}$ are nonexpansive, we derive
\begin{equation*}
\begin{split}
\norm{J^\A_{\beta_i} (y_{2i})-y_{2i}} &\leq \norm{J^\A_{\beta_i} (y_{2i})-y_{2i+1}}+ \frac{1}{8(k+1)}\\
& \leq \alpha_i \norm{y_{2i}-u}+ \frac{1}{8(k+1)} \\
& \leq \frac{2N}{16N(k+1)}+ \frac{1}{8(k+1)} = \frac{1}{4(k+1)}.
\end{split}
\end{equation*}
Similarly, one obtains
\begin{equation*}
\norm{J^\B_{\mu_i} (y_{2i+1})-y_{2i+1}} \leq \frac{1}{4(k+1)}.
\end{equation*}
By Lemma~\ref{lemmaresolvineq} and condition $(Q_4)$ we conclude
\begin{equation*}
\norm{J^\A(y_{2i})-y_{2i}}\leq  \frac{1}{2(k+1)} \quad \mbox{and}\quad \norm{J^\B(y_{2i+1})-y_{2i+1}} \leq \frac{1}{2(k+1)}.
\end{equation*}
Finally, since $J^\A$ and $J^\B$ are nonexpansive, we have for $i \in [n_1,g_f(n_1)]$ 
\begin{equation*}
\norm{J^\A (y_{2i+1})-y_{2i+1}} \leq 2 \norm{y_{2i+1}-y_{2i}}+  \norm{J^\A (y_{2i})-y_{2i}} \leq \frac{1}{k+1}
\end{equation*}
\begin{equation*}
\norm{J^\B (y_{2i})-y_{2i}} \leq 2 \norm{y_{2i+1}-y_{2i}}+  \norm{J^\B (y_{2i+1})-y_{2i+1}} \leq \frac{1}{k+1}.
\end{equation*}
With $n_2:=2n_1+1\leq \vartheta(k,f)$ the result follows by considering the cases $m=2i$ and $m=2i+1$, for $i \in [n_2,f(n_2)]$ as in both cases $i \in [n_1,g_f(n_1)]$. 
\end{proof}

In order to prove Theorem~\ref{t:quantBM2} we start with the following inequalities. Let $x\in B_N$. For all $m \in \N$, by the definition of $y_{2m+2}$ we have
\begin{equation*}
\B(y_{2m+2}) \ni \frac{1}{\mu_m}\left(\lambda_m(u-x)+(1-\lambda_m)(y_{2m+1}-x)-(y_{2m+2}-x) \right).
\end{equation*}

We have that
\begin{equation*}
\max \left\{R\norm{J^{\B}(x)-y_{2m+2}},\norm{\frac{1}{\mu_m}\left(\lambda_mu+(1-\lambda_m)y_{2m+1}-y_{2m+2} \right)}\right\} \leq 2RN.
\end{equation*}
By Lemma~\ref{l:monotone1} 
\begin{equation*}
\ip{ y_{2m+2}-x,\frac{1}{\mu_m}\left(\lambda_m(u-x)+(1-\lambda_m)(y_{2m+1}-x)-(y_{2m+2}-x) \right)} \geq -4RN\norm{J^{\B}(x)-x},
\end{equation*}
and then, using the fact that $\mu_m R\leq 1$ and the equality $2\ip{ a,b}=\norm{a}^2+\norm{b}^2-\norm{a-b}^2$,
\begin{equation*}
\begin{split}
2\norm{y_{2m+2}-x}^2 &\leq 2\lambda_m\ip{ y_{2m+2}-x, u-x} +2(1-\lambda_m)\ip{ y_{2m+2}-x,y_{2m+1}-x} + 8N\norm{J^{\B}(x)-x}\\
& = (1-\lambda_m)\left(\norm{y_{2m+2}-x}^2+\norm{y_{2m+1}-x}^2 -\norm{y_{2m+2}-y_{2m+1}}^2\right)\\
&\quad+2\lambda_m\ip{ y_{2m+2}-x, u-x} + 8N\norm{J^{\B}(x)-x}.
\end{split}
\end{equation*}
Hence
\begin{equation}\label{e:main1}
\begin{split}
(1+\lambda_m)\norm{y_{2m+2}-x}^2&\leq (1-\lambda_m)\norm{y_{2m+1}-x}^2+2\lambda_m \ip{ u-x, y_{2m+2}-x } \\
&\quad - (1-\lambda_m)\norm{y_{2m+2}-y_{2m+1}}^2+ 8N\norm{J^{\B}(x)-x}.
\end{split}
\end{equation}
Using 
\begin{equation*}
\A(y_{2m+1}) \ni \frac{1}{\beta_m}\left(\alpha_m(u-x)+(1-\alpha_m)(y_{2m}-x)-(y_{2m+1}-x) \right),
\end{equation*}
with similar arguments we obtain
\begin{equation}\label{e:main2}
\begin{split}
(1+\alpha_m)\norm{y_{2m+1}-x}^2&\leq (1-\alpha_m)\norm{y_{2m}-x}^2+2\alpha_m \ip{ u-x, y_{2m+1}-x } \\
&\quad - (1-\alpha_m)\norm{y_{2m+1}-y_{2m}}^2+ 8N\norm{J^{\A}(x)-x}.
\end{split}
\end{equation}
From \eqref{e:main1} and \eqref{e:main2} we obtain
\begin{equation}\label{e:main3}
\begin{split}
(1+\lambda_m)\norm{y_{2m+2}-x}^2&\leq (1-\alpha_m)(1-\lambda_m)\norm{y_{2m}-x}^2+2\alpha_m (1-\lambda_m)\ip{ u-x, y_{2m+1}-x } \\
& \quad +2\lambda_m \ip{ u-x, y_{2m+2}-x }- (1-\alpha_m)(1-\lambda_m)\norm{y_{2m+1}-y_{2m}}^2 \\
&\quad - (1-\lambda_m)\norm{y_{2m+2}-y_{2m+1}}^2+ 8N\left(\norm{J^{\A}(x)-x}+\norm{J^{\B}(x)-x}\right).
\end{split}
\end{equation}
Denote $s_n^x:=\norm{y_{2n}-x}^2$, for $x \in B_N$. It follows from Lemma~\ref{l:vbounded} and \eqref{e:main3}
\begin{equation}\label{e:main4}
\begin{split}
s_{m+1}^x-s_m^x &+ \norm{y_{2m+1}-y_{2m}}^2 + \norm{y_{2m+2}-y_{2m+1}}^2 
\\ &\leq \alpha_m \left(2\norm{u-x}\norm{y_{2m+2}-x}+\norm{y_{2m+2}-y_{2m+1}}^2+\norm{y_{2m+1}-y_{2m}}^2\right)\\
& \quad +\lambda_m \left(2\norm{u-x}\norm{y_{2m+1}-x}+\norm{y_{2m+1}-y_{2m}}^2\right)\\
&\leq 16N^2\alpha_m  + 12N^2\lambda_m.
\end{split}
\end{equation}
On the other hand, from the definition of $(y_{n})$, condition ($Q_4$) and Lemmas~\ref{lemmaresolvineq} and \ref{l:vbounded},
\begin{equation}\label{e:main5}
\begin{split}
\norm{J^{\B}(y_{2m+2})-y_{2m+2}} &\leq 2\norm{J^{\B}_{\mu_m}(y_{2m+2})-y_{2m+2}}\\
&\leq 2 \norm{\lambda_m (u-y_{2m+1}) + (y_{2m+1}-y_{2m+2})}\\
& \leq  4N\lambda_m + 2\norm{y_{2m+2}-y_{2m+1}}
\end{split}
\end{equation}
and
\begin{equation}\label{e:main6}
\norm{J^{\A}(y_{2m+1})-y_{2m+1}} \leq  4N\alpha_m + 2\norm{y_{2m+1}-y_{2m}}.
\end{equation}

The proof of Theorem~\ref{t:quantBM2} now follows a discussion by cases.
\subsection{First case}\label{s:First}

Let us consider first the case where $(s_n^x)$ is eventually decreasing. In this subsection we assume that  for some $q \in S$ it holds that $N \geq \max\{\norm{u-q}, \norm{x_0-q}, \norm{q}\}$.
We begin with an easy adaptation of \cite[Proposition 2.27]{K(08)} (see also Remark~2.29 in the same reference).
\begin{lemma}\label{l:metadifference}
		Let $(s_n)$ be a sequence of real numbers and $D \in \N \setminus \{ 0 \}$ be such that $0 \leq s_m \leq D$, for all $m \in \N$. For $k,n \in \N$, $f:\N \to \N$ monotone, if
	\begin{equation*}
	\forall m \in [n, \phi_1(k,n,f)] \left(s_{m+1} \leq s_{m} \right),
	\end{equation*}
	then there exists $n' \leq \phi_2 (k,n,f)$ such that
	\begin{equation}\label{eqCauchymeta2}
	\forall m \in [n',f(n')] \left(s_{m}-s_{m+1}\leq \frac{1}{k+1}\right).
	\end{equation}
	Moreover, there is $n' \in \{(f+1)^{(i)}(n): i \leq D(k+1)\}$ satisfying \eqref{eqCauchymeta2}.
\end{lemma}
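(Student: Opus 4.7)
My plan is the standard pigeonhole argument on consecutive drops, adapted to the metastable form. I would define the orbit $n_i := (f+1)^{(i)}(n)$ for $0 \leq i \leq D(k+1)$ (so $n_0 = n$ and $n_{i+1} = f(n_i) + 1$) and show that at least one of these $n_i$ serves as the required witness $n'$. This immediately delivers both the explicit bound $n' \leq \phi_2(k,n,f)$ and the ``moreover'' clause.

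The core step would be by contradiction: assume that for every $i \in \{0, 1, \ldots, D(k+1)\}$ the candidate $n' = n_i$ fails. Then there exists $m_i \in [n_i, f(n_i)]$ with $s_{m_i} - s_{m_i+1} > \frac{1}{k+1}$. The existence of $m_i$ already forces $f(n_i) \geq n_i$, whence $n_{i+1} = f(n_i) + 1 > m_i$, so the indices $m_0 < m_0 + 1 \leq m_1 < m_1 + 1 \leq \cdots \leq m_{D(k+1)} < m_{D(k+1)}+1$ are strictly increasing.

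Next, I would place all these indices inside $[n, \phi_1(k,n,f)+1]$, where the hypothesis $s_{m+1} \leq s_m$ holds. By the monotonicity of $f$ and the definition $\phi_1 = f(\phi_2)+1$, this reduces to checking $n_{D(k+1)} \leq \phi_2(k,n,f)$. Since $f+1$ is monotone, a small dichotomy applies to its orbit from $n$: if $f(n)+1 \geq n$ the sequence $(n_i)_i$ is non-decreasing, otherwise it is non-increasing, so in either case $n_i \leq \max\{n, (f+1)^{(D(k+1))}(n)\} = \phi_2(k,n,f)$ for all $i \leq D(k+1)$. This dichotomy, and the observation that the ``moreover'' set is really a finite list whose max is $\phi_2(k,n,f)$, is the main bookkeeping obstacle; the rest is genuinely routine.

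Finally, telescoping the drops on the non-increasing range yields
$$D \;\geq\; s_{m_0} \;\geq\; s_{m_0} - s_{m_{D(k+1)}+1} \;\geq\; \sum_{i=0}^{D(k+1)}\bigl(s_{m_i} - s_{m_i+1}\bigr) \;>\; (D(k+1)+1)\cdot\frac{1}{k+1} \;>\; D,$$
a contradiction. Hence some $n_i$, with $i \leq D(k+1)$, satisfies $\forall m \in [n_i, f(n_i)]\,(s_m - s_{m+1} \leq \frac{1}{k+1})$, establishing both parts of the statement simultaneously.
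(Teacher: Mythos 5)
Your proof is correct and is exactly the argument the paper intends: the paper gives no proof but defers to Kohlenbach's Proposition~2.27, whose standard pigeonhole/telescoping argument on the orbit $(f+1)^{(i)}(n)$ you have reproduced, with the right bookkeeping (the vacuous-interval observation forcing $f(n_i)\geq n_i$, the monotonicity dichotomy giving $n_i\leq\phi_2$, and the hypothesis window $[n,\phi_1]$ covering all indices used in the telescoping). Nothing to add.
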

From Lemma~\ref{l:metadifference} it follows the following result tailored for our analysis.
\begin{lemma}\label{l:conseqsmall}
For all $k,n \in \N$ and monotone $f:\N \to \N$, if there exists $x \in B_N$ such that  
\begin{equation*}
\forall m \in \left[n, \Psi_1(k,n,f)\right] \left(s_{m+1}^{x}\leq s_{m}^{x}\right),
\end{equation*}
then there exists $n' \leq \Psi_2(k,n,f)$ such that 
\begin{equation*}
\forall m \in [n',f(n')] \left(\norm{y_{2m+2}-y_{2m+1}} \leq \frac{1}{k+1} \wedge \norm{y_{2m+1}-y_{2m}} \leq \frac{1}{k+1}\right).
\end{equation*}
\end{lemma}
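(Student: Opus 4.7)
The plan is to combine the eventual monotonicity assumption on $(s_m^x)$ with the fundamental inequality \eqref{e:main4} to control both increments $\|y_{2m+1}-y_{2m}\|$ and $\|y_{2m+2}-y_{2m+1}\|$ simultaneously. Observe first that the sequence $(s_m^x) = (\|y_{2m}-x\|^2)$ is bounded by $D := 4N^2$, because $\|y_{2m}-x\| \leq \|y_{2m}-q\| + \|x-q\| \leq 2N$ by Lemma~\ref{l:vbounded} and the hypothesis $x \in B_N$.

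First I would apply Lemma~\ref{l:metadifference} to $(s_m^x)$ with parameters $D = 4N^2$, $k' := 2(k+1)^2 - 1$, starting point $n$, and shifted function $\widetilde{f}[M_1]$. Since $\phi_1[4N^2](k', n, \widetilde{f}[M_1]) = \Psi_1(k,n,f)$, the hypothesis gives the required one-step decrease of $(s_m^x)$ on the interval $[n, \phi_1[4N^2](k', n, \widetilde{f}[M_1])]$, so Lemma~\ref{l:metadifference} yields some $n_0 \leq \phi_2[4N^2](k', n, \widetilde{f}[M_1])$ with $s_m^x - s_{m+1}^x \leq \frac{1}{2(k+1)^2}$ for all $m \in [n_0, \widetilde{f}[M_1](n_0)]$.

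Next, set $n' := \max\{n_0, M_1\} \leq \Psi_2(k,n,f)$. Since $f$ is monotone and $\widetilde{f}[M_1](n_0) = f(\max\{n_0,M_1\}) = f(n')$, the interval $[n', f(n')]$ sits inside $[n_0, \widetilde{f}[M_1](n_0)]$, so the small-difference bound persists on the target range. On this range, inequality \eqref{e:main4} together with the nonnegativity of the two squared norms gives
\begin{equation*}
\|y_{2m+1}-y_{2m}\|^2 + \|y_{2m+2}-y_{2m+1}\|^2 \;\leq\; (s_m^x - s_{m+1}^x) + 16N^2\alpha_m + 12N^2\lambda_m.
\end{equation*}
The choice of $M_1$ is engineered precisely so that, by $(Q_1)$, $(Q_2)$ and monotonicity of $a,\ell$, one has $16N^2\alpha_m \leq \frac{1}{4(k+1)^2}$ and $12N^2\lambda_m \leq \frac{1}{4(k+1)^2}$ for every $m \geq M_1$. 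Adding the three contributions, each squared norm is at most $\frac{1}{(k+1)^2}$, and taking square roots yields the desired conclusion.

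The main (and essentially only) obstacle is the quantitative bookkeeping: verifying that the two $-1$ offsets hidden in $M_1$ correctly line up with the rate-of-convergence convention for $a$ and $\ell$, that the inclusion $[n', f(n')] \subseteq [n_0, \widetilde{f}[M_1](n_0)]$ holds for all monotone $f$, and that the parameters $D, k', \widetilde{f}[M_1]$ fed to Lemma~\ref{l:metadifference} match exactly the definitions of $\Psi_1$ and $\Psi_2$. Once these matchings are checked, the three estimates assemble cleanly via \eqref{e:main4}.
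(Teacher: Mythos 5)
Your proof is correct and follows essentially the same route as the paper: bound $s_m^x$ by $4N^2$, apply Lemma~\ref{l:metadifference} with $k'=2(k+1)^2-1$ and $\widetilde f[M_1]$ to get $s_m^x-s_{m+1}^x\leq \frac{1}{2(k+1)^2}$ on $[n_0,\widetilde f[M_1](n_0)]$, and then combine with \eqref{e:main4} and the definition of $M_1$ at $n'=\max\{n_0,M_1\}$. The quantitative bookkeeping (the inclusion $[n',f(n')]\subseteq[n_0,\widetilde f[M_1](n_0)]$ and the matching of parameters with $\Psi_1,\Psi_2$) checks out.
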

\begin{proof}
First note that, using Lemma~\ref{l:vbounded}, it holds that $0 \leq s_{m}^{x}\leq 4N^2$, for all $m \in \N$. By Lemma~\ref{l:metadifference} there exists $n_0 \leq \phi_2[4N^2](2(k+1)^2-1,n,\widetilde{f}[M_1])$ such that for all $m \in [n_0,\widetilde{f}[M_1](n_0)]$ it holds that $s_{m}^{x}-s_{m+1}^{x}\leq \dfrac{1}{2(k+1)^2}$. By \eqref{e:main4} the result follows with $n':= \max\{n_0,M_1\} \, \left(\leq \Psi_2(k,n,f)\right)$. 
\end{proof}
\begin{lemma}\label{l:asymregcase1}
 For all $k,n \in \N$ and monotone $f:\N \to \N$, if there exists $x \in B_N$ such that   
\begin{equation*}
\forall m \in [n, \Psi_3(k,n,f)] \left(s_{m+1}^{x}\leq s_{m}^{x}\right),
\end{equation*}
then there exists $n' \leq \Psi_4(k,n,f)$ such that for all $i \in [n',f(n')]$
\begin{equation*}
\begin{split}
& \norm{J^\A (y_{2i+2})-y_{2i+2}}\leq \frac{1}{k+1} \wedge \norm{J^\A (y_{2i+1})-y_{2i+1}}\leq \frac{1}{k+1}\\
\mbox{and} \qquad &\\
& \norm{J^\B (y_{2i+2})-y_{2i+2}}\leq \frac{1}{k+1} \wedge \norm{J^\B (y_{2i+1})-y_{2i+1}}\leq \frac{1}{k+1}.
\end{split}
\end{equation*}
%
\end{lemma}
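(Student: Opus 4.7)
The plan is to apply Lemma~\ref{l:conseqsmall} with its parameter $k$ replaced by $8k+7$, producing consecutive-difference bounds of order $\frac{1}{8(k+1)}$, and then to combine these with the resolvent estimates \eqref{e:main5}, \eqref{e:main6} and with nonexpansiveness of $J^{\A}$ and $J^{\B}$ to bound the four asymptotic regularity expressions.

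The parameter shift works cleanly: a direct inspection of Definition~\ref{d:	functions} yields the identities $M_2(k)=M_1(8k+7)$, $\Psi_3(k,n,f)=\Psi_1(8k+7,n,f)$ and $\Psi_4(k,n,f)=\Psi_2(8k+7,n,f)$, so the hypothesis $\forall m\in[n,\Psi_3(k,n,f)]\,(s^x_{m+1}\leq s^x_m)$ is precisely the hypothesis of Lemma~\ref{l:conseqsmall} with $k$ replaced by $8k+7$. Applying that lemma yields $n'\leq \Psi_4(k,n,f)$, with $n'\geq M_2$, such that
\[\forall m\in[n',f(n')]\,\left(\norm{y_{2m+2}-y_{2m+1}}\leq \frac{1}{8(k+1)}\ \wedge\ \norm{y_{2m+1}-y_{2m}}\leq \frac{1}{8(k+1)}\right).\]

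Fix $i\in[n',f(n')]$. By \eqref{e:main6} and the estimate on $\alpha_i$ supplied by $n'\geq M_2\geq a((64N(k+1))^2)-1$, one gets $\norm{J^{\A}(y_{2i+1})-y_{2i+1}}\leq 4N\alpha_i+2\norm{y_{2i+1}-y_{2i}}\leq \frac{1}{k+1}$; symmetrically, \eqref{e:main5} and the corresponding estimate on $\lambda_i$ granted by $M_2\geq \ell(3(32N(k+1))^2)-1$ give $\norm{J^{\B}(y_{2i+2})-y_{2i+2}}\leq \frac{1}{k+1}$. The remaining two bounds follow from the nonexpansiveness of the resolvents via
\[\norm{J^{\A}(y_{2i+2})-y_{2i+2}}\leq 2\norm{y_{2i+2}-y_{2i+1}}+\norm{J^{\A}(y_{2i+1})-y_{2i+1}}\]
and its analogue for $J^{\B}$. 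Since the quadratic scaling in $M_2$ makes $4N\alpha_i$ and $4N\lambda_i$ much smaller than needed, the extra term $2\norm{y_{2i+2}-y_{2i+1}}\leq \frac{1}{4(k+1)}$ contributed by the triangle inequality is harmlessly absorbed.

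The main obstacle is strictly bookkeeping: one has to confirm that the parameter shift $k\mapsto 8k+7$ inside Lemma~\ref{l:conseqsmall} together with the threshold $M_2$ are generous enough to keep all four right-hand sides below $\frac{1}{k+1}$ simultaneously after the nonexpansive triangle-inequality steps. No new conceptual ingredient beyond what was already used in Lemma~\ref{l:conseqsmall} and the estimates \eqref{e:main5}--\eqref{e:main6} preceding Subsection~\ref{s:First} is required.
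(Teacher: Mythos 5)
Your proposal is correct and follows essentially the same route as the paper: apply Lemma~\ref{l:conseqsmall} at $8k+7$ (the identities $M_2=M_1(8k+7)$, $\Psi_3(k,\cdot,\cdot)=\Psi_1(8k+7,\cdot,\cdot)$, $\Psi_4(k,\cdot,\cdot)=\Psi_2(8k+7,\cdot,\cdot)$ do hold), then combine \eqref{e:main5}--\eqref{e:main6} with the smallness of $\alpha_i,\lambda_i$ for $i\geq M_2$ and the nonexpansiveness triangle inequality, and the numerics close with room to spare. The only cosmetic caveat is that the bound $n'\geq M_2$ comes from the \emph{proof} of Lemma~\ref{l:conseqsmall} (where $n'=\max\{n_0,M_1\}$) rather than its statement; the paper secures the same fact by instantiating that lemma with $\widetilde{f}[M_2]$ and taking $n':=\max\{n_0,M_2\}$, which is equivalent.
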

\begin{proof}
By Lemma~\ref{l:conseqsmall}, there exists $n_0 \leq \Psi_2(8k+7,n,\widetilde{f}[M_2])$ such that for $n':= \max\{n_0,M_2\}$,  
\begin{equation}\label{e:normless8}
\forall i \in [n',f(n')]\left(\max\{\norm{y_{2i+1}-y_{2i}}, \norm{y_{2i+2}-y_{2i+1}}\}\leq \frac{1}{8(k+1)}\right).
\end{equation}
Notice that, since $M_2 \geq \max\{a(8N(k+1)-1), \ell(8N(k+1)-1)\}$
\begin{equation}\label{e:JBsmall}
\forall i \in [n',f(n')] \left(\max\{4N\alpha_i, 4N\lambda_i\} \leq \frac{1}{2(k+1)}\right).
\end{equation}
Hence, by \eqref{e:main5} and \eqref{e:main6}, for $i \in [n',f(n')]$
\begin{equation}\label{e:JABsmall}
 \norm{J^{\A}(y_{2i+1})-y_{2i+1}}\leq \frac{3}{4(k+1)} \quad \mbox{ and } \quad \norm{J^{\B}(y_{2i+2})-y_{2i+2}}\leq \frac{3}{4(k+1)}. 
\end{equation}
Using the fact that $J^\A$ and $J^\B$ are nonexpansive, we have  
\begin{equation*}
\begin{split}
&\norm{J^{\A}(y_{2i+2})-y_{2i+2}}\leq  2 \norm{y_{2i+2}-y_{2i+1}}+  \norm{J^\A (y_{2i+1})-y_{2i+1}}\\
\mbox{and} \qquad &\\
&\norm{J^{\B}(y_{2i+1})-y_{2i+1}}\leq  2 \norm{y_{2i+2}-y_{2i+1}}+  \norm{J^\B (y_{2i+2})-y_{2i+2}}.
\end{split}
\end{equation*}
By \eqref{e:normless8} and \eqref{e:JABsmall}, for all $i \in [n', f(n')]$ 
\begin{equation}\label{e:JABsmall2}
\norm{J^\A (y_{2i+2})-y_{2i+2}}\leq \frac{1}{k+1} \quad \mbox{ and } \quad \norm{J^\B (y_{2i+1})-y_{2i+1}}\leq \frac{1}{k+1}.
\end{equation}
The result follows from \eqref{e:JABsmall} and \eqref{e:JABsmall2} and the fact that $n' \leq \max\{\Psi_2(8k+7,n,\widetilde{f}[M_2]),M_2\}=\Psi_4(k,n,f)$. 
\end{proof}
The following result gives the quantitative version of the argument for the first case.
\begin{lemma} \label{l:case1}
For all $k,n \in \N$, monotone $f:\N \to \N$ and $x \in B_N$, if 
\begin{enumerate}[$(i)$]
	\item\label{lc13} $\forall m\in [n, \Psi_5(k,n,f)]\, \left( s_{m+1}^x\leq s_m^x \right)$
	\item\label{lc11} $\norm{J^\A(x)-x}\leq \dfrac{1}{\xi_1(n)+1} \wedge \norm{J^\B(x)-x}\leq \dfrac{1}{\xi_1(n)+1} $
	\item\label{lc12} $\forall y\in B_N\, \left(\left( \norm{J^\A(y)-y}\leq \frac{1}{n+1} \wedge  \norm{J^\B(y)-y}\leq \frac{1}{n+1}\right)\to \ip{ u-x, y-x} \leq \frac{1}{256(k+1)^2} \right)$
	\end{enumerate}
	then there exists $n' \leq \Psi_6(k,n,f)$ such that
	\begin{equation*}
	 \forall i\in[n', f(n')]\, \left(\norm{y_i-x}\leq \frac{1}{2(k+1)} \right).
	\end{equation*}    
\end{lemma}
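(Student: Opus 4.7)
The plan is to mirror, quantitatively, the standard classical route in the first (``eventually monotone'') case of the Maing\'e-style argument: eventual monotonicity of $(s_m^x)$ forces asymptotic regularity of $(y_n)$ with respect to both $J^\A$ and $J^\B$; combined with hypothesis~(iii), this makes the inner products $\ip{u-x,y_{2i+j}-x}$ (for $j\in\{1,2\}$) uniformly small on a large window; then the recursive inequality \eqref{e:main3}, thanks to~(ii), fits the template of Lemma~\ref{l:Xuquant1} and yields smallness of $s_m^x$ on that window, which extends to all indices via the asymptotic regularity.

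Concretely, set $k^* := \max\{k,n\}$ and $f^* := j_f[k,f]$. Hypothesis~(i) supplies the eventual monotonicity on $[n, \Psi_3(k^*,n,f^*)] = [n, \Psi_5(k,n,f)]$, so Lemma~\ref{l:asymregcase1} returns a witness $n_1 \leq \Psi_4(k^*,n,f^*) =: \widetilde{n}$ such that all four residues $\norm{J^\A(y_{2i+j})-y_{2i+j}}$ and $\norm{J^\B(y_{2i+j})-y_{2i+j}}$ (for $j\in\{1,2\}$) are bounded by $1/(k^*+1) \leq 1/(n+1)$ for $i \in [n_1, f^*(n_1)]$. Since $(y_n) \subset B_N$ by Lemma~\ref{l:vbounded}, hypothesis~(iii) applied with $y=y_{2i+1}$ and $y=y_{2i+2}$ yields $\ip{u-x, y_{2i+1}-x}, \ip{u-x, y_{2i+2}-x} \leq 1/(256(k+1)^2)$ on the same range; in parallel, Lemma~\ref{l:conseqsmall} provides $\max\{\norm{y_{2i+1}-y_{2i}},\,\norm{y_{2i+2}-y_{2i+1}}\} \leq 1/(4(k+1))$ there as well.

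Next I would start from \eqref{e:main3}, drop the two nonpositive norm-squared terms, and distribute the additive error $8N(\norm{J^\A(x)-x}+\norm{J^\B(x)-x})$ — bounded by $1/(64(k+1)^2(f(\Psi_6(k,n,f))+1))$ thanks to~(ii) — among an auxiliary $v_m$ and contributions to $b_m, c_m$, exactly as in the proof of Theorem~\ref{t:quantBM}, so as to cast the recursion into the form required by Lemma~\ref{l:Xuquant1} with $\widetilde{k} := 16(k+1)^2-1$ playing the role of ``$k$''. Setting $p := f^*(n_1)$, monotonicity of $f$ and $\sigma_1$ together with $n_1 \leq \widetilde{n}$ ensure $p \leq f(\Psi_6(k,n,f))$, so the bound above yields $v_m \leq 1/(4(\widetilde{k}+1)(p+1))$, while the previous paragraph gives $b_m, c_m \leq 1/(4(\widetilde{k}+1))$ throughout $[n_1, p]$. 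Lemma~\ref{l:Xuquant1} (with $M=4N^2$ and $\sigma_1=\sigma_1[A,4N^2]$) then delivers $\norm{y_{2m}-x} \leq 1/(4(k+1))$ for all $m \in [\sigma_1(\widetilde{k}, n_1), p]$; combined with $\norm{y_{2m+1}-y_{2m}} \leq 1/(4(k+1))$ this also gives $\norm{y_{2m+1}-x} \leq 1/(2(k+1))$ on that range.

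Taking $n' := 2\sigma_1(\widetilde{k}, n_1)+1$ and splitting $i \in [n', f(n')]$ into its two parities places $\lfloor i/2\rfloor \in [\sigma_1(\widetilde{k}, n_1), p]$ in both cases, so $\norm{y_i-x} \leq 1/(2(k+1))$; monotonicity of $\sigma_1$ and $n_1 \leq \widetilde{n}$ give $n' \leq 2\sigma_1(\widetilde{k}, \widetilde{n})+1 = \Psi_6(k,n,f)$. The main obstacle will be the quantitative bookkeeping in the third paragraph: rewriting the additive error in \eqref{e:main3} into the rigid $v_m/b_m/c_m$ template of Lemma~\ref{l:Xuquant1} while keeping all bounds calibrated at the correct precision (in particular, absorbing the $(1-\lambda_m)$ factor on one of the two inner products without losing the $1/(256(k+1)^2)$ threshold), and then verifying that the output window for $m$ covers both parities of $i \in [n', f(n')]$.
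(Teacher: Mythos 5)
Your proposal follows the paper's proof essentially step for step: Lemma~\ref{l:asymregcase1} applied with $\max\{k,n\}$ and $j_f[k,f]$, hypothesis~(iii) to make the inner products $\leq \frac{1}{256(k+1)^2}$ on $[n_1,j_f(n_1)]$, the $v/b/c$ recasting of \eqref{e:main3} fed into Lemma~\ref{l:Xuquant1} with $\widetilde{k}=16(k+1)^2-1$, and the parity split at $n'=2\sigma_1(\widetilde{k},n_1)+1\leq \Psi_6(k,n,f)$. Two small points to tighten: the bound $\max\{\norm{y_{2i+1}-y_{2i}},\norm{y_{2i+2}-y_{2i+1}}\}\leq \frac{1}{4(k+1)}$ must be read off from \eqref{e:normless8} \emph{inside} the proof of Lemma~\ref{l:asymregcase1}, so that it holds for the very same witness $n_1$ (an independent ``parallel'' black-box application of Lemma~\ref{l:conseqsmall} would yield a possibly different witness), and one should add the harmless reduction that $f(m)\geq m$ for $m\leq \Psi_6(k,n,f)$, which gives $p\geq 1$ and is actually needed for the arithmetic verifying $b_m\leq \frac{1}{4(\widetilde{k}+1)}$.
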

\begin{proof}
We may assume that for all $m \leq \Psi_6(k,n,f)$ it holds that $f(m) \geq m$, otherwise the result is trivial.
By Lemma~\ref{l:asymregcase1} and the first hypothesis, there is $n_0 \leq \widetilde{n}$ such that for all $i  \in [n_0,j_f(n_0)]$
\begin{equation*}
 \norm{J^\A(y_{2i+1})-y_{2i+1}} \leq \frac{1}{n+1} \wedge  \norm{J^\B(y_{2i+1})-y_{2i+1}} \leq \frac{1}{n+1}
\end{equation*} 
and
\begin{equation*}
 \norm{J^\A(y_{2i+2})-y_{2i+2}} \leq \frac{1}{n+1} \wedge  \norm{J^\B(y_{2i+2})-y_{2i+2}} \leq \frac{1}{n+1}.
\end{equation*} 
Furthermore, from \eqref{e:normless8} in the proof of Lemma~\ref{l:asymregcase1} we also know that for $i \in [n_0,j_f(n_0)]$
\begin{equation}\label{e:eventoodd}
\norm{y_{2i+1}-y_{2i}}\leq \frac{1}{4(k+1)}.
\end{equation}
Hence, using the third hypothesis and the fact that $(y_m) \subset B_N$, for all $i  \in [n_0,j_f(n_0)]$
\begin{equation}\label{e:innerprodsmall}
\ip{ u-x, y_{2i+1}-x} \leq \frac{1}{256(k+1)^2} \wedge \ip{ u-x, y_{2i+2}-x} \leq \frac{1}{256(k+1)^2}.
\end{equation}
From \eqref{e:main3} we obtain
\begin{equation*}
\norm{y_{2i+2}-x}^2 \leq (1-\alpha_i)(1-\lambda_i)\left( \norm{y_{2i}-x}^2 + v_i\right)+ \alpha_i b_i+\lambda_ic_i
\end{equation*}
with $v_i := 8N\left(\norm{J^\A(x)-x}+\norm{J^\B(x)-x}\right)$, $b_i:=2\ip{ u-x,y_{2i+1}-x} +v_i$ and $c_i:= 2\ip{ u-x,y_{2i+2}-x} +v_i$.
By the second hypothesis and the monotonicity of $f$ and $\sigma_1$
\begin{equation}\label{e:errorsmall}
\begin{split}
 8N\left(\norm{J^\A(x)-x}+\norm{J^\B(x)-x}\right) &\leq \frac{16N}{1024N(k+1)^2(f(2\sigma_1(\widetilde{k},\widetilde{n})+1)+1)}\\
&\leq \frac{1}{64(k+1)^2(f(2\sigma_1(\widetilde{k},n')+1)+1)}.
\end{split}
\end{equation}
With $p=j_f(n_0)=f(2\sigma_1(\widetilde{k},n_0)+1)$, we have 
\begin{equation*}
\forall i  \in [n_0,p] \left( v_i \leq \frac{1}{64(k+1)^2(p+1)} \wedge b_i \leq \frac{1}{64(k+1)^2} \wedge c_i \leq \frac{1}{64(k+1)^2}\right),
\end{equation*}
using \eqref{e:innerprodsmall}, \eqref{e:errorsmall} and $p \geq \sigma_1(\widetilde{k},n_0) \geq 1$. By Lemma~\ref{l:Xuquant1} we conclude that
\begin{equation*}
\forall i \in [\sigma_1(\widetilde{k},n_0) ,p]\left(\norm{y_{2i}-x}\leq \frac{1}{4(k+1)}\right).
\end{equation*}
Since $\sigma_1(\widetilde{k},n_0)  \geq n_0$, we have that $ [\sigma_1(\widetilde{k},n_0) ,p] \subseteq [n_0,j_f(n_0)]$ and so by \eqref{e:eventoodd}
\begin{equation*}
\forall i \in [\sigma_1(\widetilde{k},n_0) ,p]\left(\norm{y_{2i+1}-x}\leq \frac{1}{2(k+1)}\right).
\end{equation*}
The result follows with $n':=2\sigma_1(\widetilde{k},n_0)+1$.
\end{proof}

\subsection{Second case}\label{s:Second}

We are now going to consider the case where the sequence $(s^{x}_n)$ is not eventually decreasing. In this subsection, we again  assume that $N\geq \max\{\norm{u-q}, \norm{x_0-q}, \norm{q}\}$, for some $q \in S$.

For $s:\N \to \N$ and $m \in \N$ we define a monotone function $\tau$ as follows.
\begin{equation*}
\tau^{s}_{m}(n):=
\begin{cases} n & n<m\\   
				\max\{ k \in [m,n]: s_{k}<s_{k+1}\} & n \geq m \wedge\exists k \in [m,n] \left(s_{k}<s_{k+1}\right)  \\
			    n   & n \geq m \wedge  \forall k \in [m,n] \left(s_{k+1}\leq s_{k}\right) . 
\end{cases}
\end{equation*}
\begin{lemma}[{\cite[Lemma~3.27]{DP(21)}}]\label{qtlemmaMainge}
Let $s:\N \to \N$ and $m,r \in \N$ be arbitrary. If $m \geq r$ and $s_{m}<s_{m+1}$, then
\begin{equation*}
 \forall i \geq m\left(\tau^{s}_{m}(i) \geq r \wedge \max\{s_{\tau^{s}_{m}(i)},s_i\}\leq s_{\tau^{s}_{m}(i)+1} \right) .
\end{equation*}
\end{lemma}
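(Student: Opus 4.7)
The plan is to unfold the definition of $\tau:=\tau^{s}_{m}(i)$, use the hypothesis $s_m<s_{m+1}$ to place ourselves in the nontrivial branch of that case-defined function, and finish with a short case split on whether the maximum defining $\tau$ is attained at $i$ itself or strictly before $i$.

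First I would observe that, since $i\geq m$ and $s_m<s_{m+1}$, the set $\{k\in[m,i]:s_k<s_{k+1}\}$ is nonempty (it contains $k=m$), so the middle branch of the definition of $\tau^{s}_{m}$ is the one that applies, giving $\tau=\max\{k\in[m,i]:s_k<s_{k+1}\}\in[m,i]$. This immediately yields $\tau^{s}_{m}(i)=\tau\geq m\geq r$, settling the first conjunct. Moreover, $\tau$ itself lies in the defining set, so $s_{\tau}<s_{\tau+1}$, which in particular gives $s_{\tau^{s}_{m}(i)}\leq s_{\tau^{s}_{m}(i)+1}$.

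It then remains to show $s_i\leq s_{\tau+1}$. If $\tau=i$, this follows from the strict inequality $s_i<s_{i+1}$ just noted. Otherwise $\tau<i$, and by the maximality of $\tau$ every index $k\in\{\tau+1,\ldots,i\}\subseteq[m,i]$ fails $s_k<s_{k+1}$, i.e.\ $s_{k+1}\leq s_k$ for each such $k$. Concatenating these inequalities gives the descending chain $s_i\leq s_{i-1}\leq\cdots\leq s_{\tau+1}$ (with the degenerate case $\tau+1=i$ yielding equality), which establishes the second conjunct.

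The argument is essentially bookkeeping around the three-branch definition of $\tau^{s}_{m}$; there is no genuine obstacle beyond correctly identifying which branch is in force and keeping track of where strict versus weak inequalities enter. The only minor subtlety is to treat the boundary situation $\tau+1=i$ separately in the descending chain, since otherwise the concatenation is vacuous.
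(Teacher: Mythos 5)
Your proof is correct, and it is the standard argument for this quantitative version of Maing\'e's lemma; the paper itself only cites \cite[Lemma~3.27]{DP(21)} without reproducing a proof, and your reasoning (nonemptiness of $\{k\in[m,i]:s_k<s_{k+1}\}$ via $k=m$, hence the middle branch applies; $\tau\geq m\geq r$; $s_\tau<s_{\tau+1}$ by membership in the defining set; and the descending chain $s_i\leq\cdots\leq s_{\tau+1}$ from maximality) is exactly the intended one. All steps, including the boundary cases $\tau=i$ and $\tau+1=i$, check out.
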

The following result gives the quantitative version of the argument for the second case.
\begin{lemma}\label{lemmamaincase2}
For $k,m,n \in \N$, $f:\N \to \N$  monotone and $x \in B_{N}$, assume that 
\begin{enumerate} [$(i)$]
\item\label{quantcase2} $n \geq r(m,k) \wedge s_n^{x} <s_{n+1}^{x}$,
\item\label{IneqJz-z} $\norm{J^{\A}(x)-x}\leq \frac{1}{\xi_2(n)+1} \wedge \norm{J^{\B}(x)-x}\leq \frac{1}{\xi_2(n)+1}$,
\item\label{hypprodint}
$\forall y \in B_{N} \left(\left(\norm{J^{\A}(y)-y}\leq \frac{1}{m+1} \wedge \norm{J^{\B}(y)-y}\leq \frac{1}{m+1}\right)\to \ip{ u-x,y-x } \leq \frac{1}{3 \cdot 96(k+1)^2} \right)$.
\end{enumerate}
Then, there exists $n' \leq 2n+1$ such that
$$\forall i \in [n',f(n')]\left(\norm{y_i -x} \leq \frac{1}{2(k+1)} \right).$$ 
\end{lemma}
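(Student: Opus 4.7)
The idea is to take $n':=2n+1$ and, for each index $j\geq n$, to exploit the quantitative Maingé lemma (Lemma~\ref{qtlemmaMainge}) in order to find an auxiliary index $M\in[r(m,k),j]$ at which $s^x_M\leq s^x_{M+1}$, use this ``non-decreasing'' inequality to squeeze a bound on $s^x_{M+1}$ out of \eqref{e:main3}, and then transport the bound to $j$ via $s^x_j\leq s^x_{M+1}$. The constants $r(m,k)$, $\xi_2$, and the appearance of $\mathcal{P}$ in $\xi_2$ are precisely what make the final arithmetic close.

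\textbf{Step 1 (Maingé).} Hypothesis~\eqref{quantcase2} together with Lemma~\ref{qtlemmaMainge} (applied to the sequence $(s_i^x)$ with base $r(m,k)$) yields, for every $j\geq n$, an index $M:=\tau^{s^x}_{r(m,k)}(j)$ satisfying $M\geq r(m,k)$, $s^x_M\leq s^x_{M+1}$ and $s^x_j\leq s^x_{M+1}$. Note also that $M\leq j\leq f(2n+1)$ whenever $2j$ or $2j+1$ lies in $[n',f(n')]$.

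\textbf{Step 2 (asymptotic regularity at $M$).} Plugging $s^x_M\leq s^x_{M+1}$ into inequality~\eqref{e:main4} at index $M$ gives
\[\|y_{2M+1}-y_{2M}\|^2+\|y_{2M+2}-y_{2M+1}\|^2\leq 16N^2\alpha_M+12N^2\lambda_M.\]
Since $M\geq r(m,k)\geq \mathfrak{m}(2(m+1)^2)$, conditions $(Q_1),(Q_2)$ force the right-hand side to be small in terms of $m$, and then \eqref{e:main5}--\eqref{e:main6} together with the triangle inequality give
\[\max\bigl\{\|J^\A(y_{2M+\ell})-y_{2M+\ell}\|,\,\|J^\B(y_{2M+\ell})-y_{2M+\ell}\|\bigr\}\leq \tfrac{1}{m+1},\quad \ell\in\{1,2\},\]
with constants arranged so that hypothesis~\eqref{hypprodint} becomes applicable.

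\textbf{Step 3 (inner product estimate).} Applying hypothesis~\eqref{hypprodint} to $y=y_{2M+1}$ and $y=y_{2M+2}$ (both in $B_N$ by Lemma~\ref{l:vbounded}) yields $\ip{u-x,y_{2M+\ell}-x}\leq \tfrac{1}{3\cdot 96(k+1)^2}$ for $\ell\in\{1,2\}$.

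\textbf{Step 4 (bounding $s^x_{M+1}$).} Rewriting \eqref{e:main3} at $M$ and using $s^x_M\leq s^x_{M+1}$, the $s$-terms collect to
\[(1+\lambda_M-(1-\alpha_M)(1-\lambda_M))s^x_{M+1}=(\alpha_M+2\lambda_M-\alpha_M\lambda_M)s^x_{M+1}\geq (\alpha_M+\lambda_M)s^x_{M+1}.\]
The right-hand side of \eqref{e:main3} is controlled by Step~3 and by hypothesis~\eqref{IneqJz-z} combined with the definition of $\xi_2$. Dividing by $\alpha_M+\lambda_M\geq 1/\mathcal{P}(M)$ via $(Q_8)$, and using $\mathcal{P}(M)\leq \mathcal{P}(f(2n+1))$ (from Step~1), all three contributions become $O\bigl(\tfrac{1}{(k+1)^2}\bigr)$; an explicit calculation gives $s^x_{M+1}\leq \tfrac{1}{4(k+1)^2}$, and hence $\|y_{2j}-x\|\leq \tfrac{1}{2(k+1)}$ for every $j\geq n$ with $2j\leq f(2n+1)$.

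\textbf{Step 5 (odd indices).} For $i=2j+1\in[n',f(n')]$ one has $j\geq n\geq r(m,k)$ and applying \eqref{e:main4} \emph{at index $j$} gives $\|y_{2j+1}-y_{2j}\|^2\leq s^x_j+16N^2\alpha_j+12N^2\lambda_j$, which is $O(\tfrac{1}{(k+1)^2})$ by Step~4 and the definition of $r(m,k)$. The triangle inequality $\|y_{2j+1}-x\|\leq \|y_{2j}-x\|+\|y_{2j+1}-y_{2j}\|$ then yields $\|y_{2j+1}-x\|\leq \tfrac{1}{2(k+1)}$, closing the proof with $n'=2n+1$.

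The main obstacle is the fifth step combined with the constant chasing in the fourth: one has to choose the thresholds baked into $r(m,k)$ and $\xi_2$ so that \emph{simultaneously} (a) the $s^x_M\leq s^x_{M+1}$ inequality produces resolvent bounds sharp enough to trigger hypothesis~\eqref{hypprodint}, (b) the factor $\mathcal{P}(M)$ introduced when inverting $\alpha_M+\lambda_M$ gets absorbed by $\mathcal{P}(f(2n+1))$ from the definition of $\xi_2$, and (c) the odd-index estimate does not exceed $\tfrac{1}{2(k+1)}$. All these are purely bookkeeping once the strategy above is in place.
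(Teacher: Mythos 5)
Your plan follows the paper's proof essentially step for step (Maing\'e's $\tau$, asymptotic regularity at the auxiliary index $M$ so that hypothesis~\eqref{hypprodint} fires, division by $\alpha+\lambda\geq 1/\mathcal{P}$ with $\mathcal{P}(M)\leq\mathcal{P}(f(2n+1))$ absorbed by $\xi_2$, then the odd indices via \eqref{e:main4}), and the constants do close as you anticipate; your shortcut $\alpha+2\lambda-\alpha\lambda\geq\alpha+\lambda$ even spares the extra $4N^2\alpha$ term the paper carries. Two small points to fix when writing it out: Lemma~\ref{qtlemmaMainge} must be applied with base $n$ (where $s^x_n<s^x_{n+1}$ is actually assumed) and anchor $r=r(m,k)$, i.e.\ $M=\tau^{s^x}_{n}(j)$ rather than $\tau^{s^x}_{r(m,k)}(j)$; and the even-index bound in Step~4 should be recorded as $\norm{y_{2j}-x}\leq\frac{1}{4(k+1)}$ (which your arithmetic in fact delivers, since $s^x_{M+1}\leq\frac{1}{32(k+1)^2}$), because Step~5 adds $\norm{y_{2j+1}-y_{2j}}\leq\frac{1}{4(k+1)}$ on top of it and would otherwise overshoot $\frac{1}{2(k+1)}$.
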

\begin{proof}
	With $x\in B_N$, as in the statement of the lemma, we simplify the notation by writing $s_n:=s_n^x$, and $\tau_n:=\tau_n^{s^x}$. From Lemma~\ref{qtlemmaMainge} we know that
\begin{equation*}
\forall i \geq n \left( \tau_{n}(i) \geq r(m,k) \wedge  \max\{s_{\tau_{n}(i)},s_i\}\leq s_{\tau_{n}(i)+1} \right).
\end{equation*}
Let $i \in [n,f(2n+1)]$. Since $s_{\tau_{n}(i)} \leq s_{\tau_{n}(i)+1}$ and $r(m,k) \geq \max\{a(32\cdot64N^2(m+1)^2-1), \ell(24\cdot64N^2(m+1)^2-1)\}$, by \eqref{e:main4} we have
\begin{equation*}
\max\{\norm{y_{2{\tau_{n}(i)}+1}-y_{2{\tau_{n}(i)}}}^2,\norm{y_{2{\tau_{n}(i)}+2}-y_{2{\tau_{n}(i)}+1}}^2\} \leq \frac{16N^2}{32\cdot 64N^2(m+1)^2}+\frac{12N^2}{24\cdot 64N^2(m+1)^2}= \frac{1}{64(m+1)^2},
\end{equation*}
which entails
\begin{equation*}
\max\left\{\norm{y_{2{\tau_{n}(i)}+1}-y_{2{\tau_{n}(i)}}},\norm{y_{2{\tau_{n}(i)}+2}-y_{2{\tau_{n}(i)}+1}}\right\}\leq \frac{1}{8(m+1)}.
\end{equation*}
By the monotonicity of $a$ and $\ell$, we have ${\tau_{n}(i)} \geq r(m,k) \geq \max\{a(16N(m+1)-1), \ell(16N(m+1)-1)\}$, which implies 
\begin{equation*}
4N \alpha_{\tau_{n}(i)} \leq \frac{1}{4(m+1)} \quad \mbox{and} \quad 4N \lambda_{\tau_{n}(i)} \leq \frac{1}{4(m+1)}.
\end{equation*}
From \eqref{e:main5} and \eqref{e:main6} it follows that
\begin{equation*}
\norm{J^{\A}(y_{2{\tau_{n}(i)}+1})-y_{2{\tau_{n}(i)}+1}}\leq \frac{1}{2(m+1)} \wedge \norm{J^{\B}(y_{2{\tau_{n}(i)}+2})-y_{2{\tau_{n}(i)}+2}}\leq \frac{1}{2(m+1)}.
\end{equation*}
Hence
\begin{equation*}
 \norm{J^{\B}(y_{2{\tau_{n}(i)}+1})-y_{2{\tau_{n}(i)}+1}}\leq  2\norm{y_{2{\tau_{n}(i)}+2}-y_{2{\tau_{n}(i)}+1}}+ \norm{J^{\B}(y_{2{\tau_{n}(i)}+2})-y_{2{\tau_{n}(i)}+2}} \leq \frac{1}{m+1}
\end{equation*}
and
\begin{equation*}
 \norm{J^{\A}(y_{2{\tau_{n}(i)}+2})-y_{2{\tau_{n}(i)}+2}}\leq  2\norm{y_{2{\tau_{n}(i)}+2}-y_{2{\tau_{n}(i)}+1}}+ \norm{J^{\A}(y_{2{\tau_{n}(i)}+1})-y_{2{\tau_{n}(i)}+1}}\leq \frac{1}{m+1}.
\end{equation*}
Then, by the third hypothesis and the fact that $(y_n)\subset B_N$ (cf.~Lemma \ref{l:vbounded}),
\begin{equation}\label{e:prodintsmall}
\ip{ u-x,y_{2{\tau_{n}(i)}+1}-x } \leq \frac{1}{3 \cdot 96(k+1)^2} \quad \mbox{and} \quad \ip{ u-x,y_{2{\tau_{n}(i)}+2}-x } \leq \frac{1}{3 \cdot 96(k+1)^2}.
\end{equation}
Since $s_{\tau_{n}(i)} \leq s_{\tau_{n}(i)+1}$, it follows from \eqref{e:main3} that
\begin{equation*}
\begin{split}
(\alpha_{{\tau_{n}(i)}}+2\lambda_{{\tau_{n}(i)}}-\alpha_{{\tau_{n}(i)}}\lambda_{{\tau_{n}(i)}}) s_{{\tau_{n}(i)}+1} &\leq (\alpha_{{\tau_{n}(i)}} +2\lambda_{{\tau_{n}(i)}})(2\ip{ u- x, y_{2{\tau_{n}(i)}+1}-x} + \ip{ u- x, y_{2{\tau_{n}(i)}+2}-x}) \\
& \quad +8N\left(\norm{J^{\A}(x)-x}+\norm{J^{\B}(x)-x}\right), 
\end{split}
\end{equation*}
and, since $s_{\tau_n(i)+1}\leq 4N^2$ and $\alpha_{{\tau_{n}(i)}}\lambda_{{\tau_{n}(i)}}\leq \alpha_{{\tau_{n}(i)}}(\alpha_{{\tau_{n}(i)}}+2\lambda_{{\tau_{n}(i)}})$,
\begin{equation*}
\begin{split}
(\alpha_{{\tau_{n}(i)}}+2\lambda_{{\tau_{n}(i)}}) s_{{\tau_{n}(i)}+1} &\leq (\alpha_{{\tau_{n}(i)}} +2\lambda_{{\tau_{n}(i)}})(2\ip{ u- x, y_{2{\tau_{n}(i)}+1}-x} + \ip{ u- x, y_{2{\tau_{n}(i)}+2}-x}) \\
& \quad +8N\left(\norm{J^{\A}(x)-x}+\norm{J^{\B}(x)-x}\right) +4N^2\alpha_{{\tau_{n}(i)}}(\alpha_{{\tau_{n}(i)}}+2\lambda_{{\tau_{n}(i)}}). 
\end{split}
\end{equation*}
By the second hypothesis, $8N\left(\norm{J^{\A}(x)-x}+\norm{J^{\B}(x)-x}\right) \leq \dfrac{1}{96\mathcal{P}(f(2n+1))(k+1)^2}$ and so
\begin{equation*}
\begin{split}
s_{{\tau_{n}(i)}+1} &\leq 2\ip{ u- x, y_{2{\tau_{n}(i)}+1}-x} + \ip{ u- x, y_{2{\tau_{n}(i)}+2}-x}\\ 
& \quad + \frac{1}{\alpha_{{\tau_{n}(i)}}+\lambda_{{\tau_{n}(i)}}}\cdot\frac{1}{96\mathcal{P}(f(2n+1))(k+1)^2} + 4N^2\alpha_{{\tau_{n}(i)}}. 
\end{split}
\end{equation*}
We have $\tau_n(i) \leq i \leq f(2n+1)$. Hence by \eqref{e:prodintsmall} and the monotonicity of $\mathcal{P}$,
\begin{equation*}
s_{{\tau_{n}(i)}+1}\leq  \frac{2}{96(k+1)^2} + 4N^2\alpha_{{\tau_{n}(i)}}. 
\end{equation*}
By the monotonicity of $a$, we have $r(m,k) \geq a(4\cdot96N^2(k+1)^2-1)$, and thus 
\begin{equation*}
4N^2\alpha_{{\tau_{n}(i)}} \leq \frac{1}{96(k+1)^2}.
\end{equation*}
We conclude that $s_{{\tau_{n}(i)}+1} \leq \dfrac{1}{32(k+1)^2}$, and so $s_i \leq \dfrac{1}{32(k+1)^2}$. Consequently, 
\begin{equation}\label{e:case2_even}
	\forall i\in [n, f(2n+1)]\, \left(\|y_{2i}-x\|\leq \frac{1}{4(k+1)}\right).
\end{equation}
By \eqref{e:main4} and the fact that $s_{i+1}\geq 0$, we conclude
\begin{equation*}
		\norm{y_{2i+1}-y_{2i}}^2 \leq 16N^2\alpha_i  + 12N^2\lambda_i + \frac{1}{32(k+1)^2}.
\end{equation*}
Since $i\geq n \geq r(m,k)\geq \max\{a(16\cdot64N^2(k+1)^2-1), \ell(12\cdot64N^2(k+1)^2-1)\}$, it follows $\|y_{2i+1}-y_{2i}\|\leq \frac{1}{4(k+1)}$.
Hence, using \eqref{e:case2_even}
\begin{equation}\label{e:case2_odd}
	\forall i\in [n, f(2n+1)]\, \left(\|y_{2i+1}-x\|\leq \frac{1}{2(k+1)}\right).
\end{equation}
Now let $i\in [2n+1, f(2n+1)]$. If $i=2i'$, then $i'\in[n, f(2n+1)]$ and the result follows from \eqref{e:case2_even}. If $i=2i'+1$, then the result follows from \eqref{e:case2_odd} as again $i'\in[n, f(2n+1)]$. This concludes the proof.
\end{proof}
\subsection{Putting it together}\label{s:Together}
%
%
%
%

We are now able to prove the main result of this section.
\begin{proof}[Proof of Theorem~\ref{t:quantBM2}]
	Let $k\in \N$ and a monotone function $f:\N\to\N$ be given. Since $N \geq 2 \norm{u-q}$, by Lemma~\ref{l:projection}, there exist $n_0\leq \zeta(\overline{k},\Xi)$ and $x\in B_N$ such that
\begin{equation*}
	\|J^{\A}(x)-x\|\leq \frac{1}{\Xi(n_0)+1} \land \|J^{\B}(x)-x\|\leq \frac{1}{\Xi(n_0)+1}
\end{equation*}
	and
\begin{equation*}
	\forall y\in B_N\, \left(\left(\|J^{\A}(y)-y\|\leq \frac{1}{n_0+1} \wedge \|J^{\B}(y)-y\|\leq \frac{1}{n_0+1}\right)\to \ip{ u-x, y-x}\leq \frac{1}{3\cdot 96(k+1)^2}\right).
\end{equation*}
	Assume that for all $m\in [r^{+}(n_0,k), \Psi_5(k, r^{+}(n_0, k),f)]$ it holds that $s_{m+1}\leq s_m$. Notice that $\Xi(n_0)\geq \Xi_1(n_0)=\xi_1(r^{+}(n_0,k))$ and $r^{+}(n_0,k)\geq n_0$. Applying Lemma~\ref{l:case1} (with $n=r^{+}(n_0,k)$) we conclude that
\begin{equation}\label{e:conclusioncase1}
	\exists n\leq \Psi_6(k,r^{+}(n_0,k),f)\, \forall i\in [n, f(n)] \left(\|y_i-x\|\leq \frac{1}{2(k+1)}\right).
\end{equation}
	Now assume that $s_{n_1}<s_{n_1+1}$, for some $n_1\in[r^{+}(n_0,k), \Psi_5(k, r^{+}(n_0, k),f)]$. Since $\xi_2$ is monotone, we have $\Xi(n_0)\geq \Xi_2(n_0)=\xi_2(\Psi_5(k, r^{+}(n_0, k),f))\geq \xi_2(n_1)$. Then, applying Lemma~\ref{lemmamaincase2} (with $m=n_0$ and $n=n_1$) we conclude that
\begin{equation}\label{e:conclusioncase2}
\exists n\leq 2n_1+1 \,\forall i\in [n, f(n)] \left(\|y_i-x\|\leq \frac{1}{2(k+1)}\right).
\end{equation}
	 Hence, there is $n\leq \mu(k,f)$ such that for all $i,j\in[n, f(n)]$,
	 \[\|y_i-y_j\|\leq \|y_i-x\|+\|y_j-x\|\leq \frac{1}{k+1}.\qedhere\]
\end{proof}
\begin{remark}
The proof of Theorem~\ref{t:quantBM2} actually shows that for all $k \in \N$ and for all $f:\N \to \N$
\begin{equation*}
\exists n \leq \mu(k,f)\,\exists x \in B_{N}\, \forall m \in [n,f(n)] \left(\norm{y_m-x}\leq\frac{1}{2(k+1)} \wedge \norm{J^\A(x)-x} \leq \frac{1}{k+1} \wedge \norm{J^\B(x)-x} \leq \frac{1}{k+1}\right) .
\end{equation*}
\end{remark}
%
\section{A generalization with error terms}\label{s:Reduction}
In this section we show that $\norm{x_n-y_n} \to 0$ and compute rates of convergence. These rates entail that the main results of Sections~\ref{s:3} and \ref{s:second} can be extended from \eqref{HPPA2} to \eqref{HPPA2e}.
\begin{lemma}\label{l:rate1}
Assume that $\sum \alpha_n= \infty$ $($or $\sum \lambda_n= \infty)$ with rate of divergence ${\rm A}$, and $\sum \norm{e_n} < \infty$ and $\sum \norm{e'_n} <\infty$ with Cauchy rates ${\rm E}, {\rm E'}$, respectively. Consider $M \in \N$ such that $M\geq  \sum_{i=0}^{{\rm E}(1)}\norm{e_n}+\sum_{i=0}^{{\rm E'}(1)}\norm{e'_n}+1$. Then $\norm{x_n-y_n} \to 0$ with rate of convergence
 $$\rho(k):=\rho[{\rm A}, {\rm E}, {\rm E'}, M](k):=\max\{2 \rho_1(2k+1)+1, 2{\rm E}(2k+1)+3\},$$
where $\rho_1(k)=\rho_1[{\rm A},\mathbf{0},\mathbf{0},{\rm D},M](k)$ is as in Lemma~\ref{l:Xuquant3},
with ${\rm D}(k):=\max\{{\rm E}(2k+1), {\rm E'}(2k+1) \}$ and $\mathbf{0}$ denotes the zero function.
\end{lemma}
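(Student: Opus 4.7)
The plan is to set $s_m := \|x_{2m} - y_{2m}\|$, derive a Xu-type recurrence that fits the hypothesis of Lemma~\ref{l:Xuquant3}, apply that lemma to obtain a rate of convergence for $s_m \to 0$, and finally transfer the resulting rate to the full sequence $(\|x_n - y_n\|)_n$ by splitting on the parity of $n$.

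First I would use the nonexpansivity of $J^{\A}_{\beta_m}$ and $J^{\B}_{\mu_m}$ applied to the formulas defining $x_{2m+1}, x_{2m+2}$ and $y_{2m+1}, y_{2m+2}$ to obtain
\[\|x_{2m+1} - y_{2m+1}\| \leq (1-\alpha_m)\, s_m + \|e_m\|\]
and then
\[s_{m+1} \leq (1-\alpha_m)(1-\lambda_m)\, s_m + (1-\lambda_m)\|e_m\| + \|e'_m\| \leq (1-\alpha_m)(1-\lambda_m)\, s_m + d_m,\]
with $d_m := \|e_m\| + \|e'_m\|$. This matches the premise of Lemma~\ref{l:Xuquant3} with $b_m \equiv c_m \equiv 0$, so the zero function $\mathbf{0}$ serves as rate for both $\limsup b_m \leq 0$ and $\limsup c_m \leq 0$. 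To apply the lemma I still need (a) an upper bound on $(s_m)$ and (b) a Cauchy rate for $\sum d_m$. For (a), since $s_0 = 0$ and $s_{m+1} \leq s_m + d_m$, induction gives $s_m \leq \sum_{i=0}^{\infty} d_i$; splitting this sum into $\sum_{i=0}^{\mathrm E(1)}\|e_i\| + \sum_{i > \mathrm E(1)}\|e_i\| + \sum_{i=0}^{\mathrm E'(1)}\|e'_i\| + \sum_{i > \mathrm E'(1)}\|e'_i\|$, the two tails are each at most $1/2$ by the Cauchy property at level $k=1$, so $s_m \leq M$. For (b), the function $\mathrm D(k) := \max\{\mathrm E(2k+1), \mathrm E'(2k+1)\}$ is a Cauchy rate for $\sum d_m$, because non-negativity of $\|e_i\|$ and $\|e'_i\|$ lets us slide the starting index of each Cauchy estimate forward past $\mathrm E(2k+1)$ (resp.\ $\mathrm E'(2k+1)$), contributing at most $1/(2(k+1))$ each. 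Invoking Lemma~\ref{l:Xuquant3} yields $\rho_1 = \rho_1[\mathrm A, \mathbf{0}, \mathbf{0}, \mathrm D, M]$ as a rate of convergence for $s_m \to 0$.

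The full rate $\rho$ is then obtained by a parity split. If $n \geq \rho(k)$ is even, then $n \geq 2\rho_1(2k+1) + 1$ forces $n/2 \geq \rho_1(2k+1)$, so $\|x_n - y_n\| = s_{n/2} \leq 1/(2(k+1))$. If $n = 2m+1 \geq \rho(k)$, then $m \geq \rho_1(2k+1)$ and $m \geq \mathrm E(2k+1) + 1$; the latter forces $\|e_m\| \leq 1/(2(k+1))$, since $\|e_m\|$ appears as a single summand of $\sum_{i=\mathrm E(2k+1)+1}^{m}\|e_i\|$, which is controlled by the Cauchy property. Combining, $\|x_n - y_n\| \leq s_m + \|e_m\| \leq 1/(k+1)$. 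The main (and essentially only) obstacle is the index bookkeeping required to pass from the half-index rate $\rho_1$ for $s_m$ to the target rate $\rho$ on the full sequence, together with verifying condition (a); every analytical inequality reduces to nonexpansivity of the resolvents.
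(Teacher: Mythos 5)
Your proposal is correct and follows essentially the same route as the paper: nonexpansivity of the resolvents gives the two one-step inequalities, composing them yields the Xu-type recurrence with $d_m=\|e_m\|+\|e'_m\|$, induction plus the head/tail split of the error sums gives the bound $M$ and the Cauchy rate $\mathrm D$, Lemma~\ref{l:Xuquant3} handles the even-indexed differences, and the odd case is recovered from the first one-step inequality. Your parity bookkeeping at the end matches the paper's (and your use of $\|e_m\|$ rather than $\|e'_m\|$ in the odd step is the correct reading of the paper's final line).
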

\begin{proof}
Since the resolvent functions are nonexpansive, we have that 
\begin{equation}\label{e:x-yodd}
\norm{x_{2n+1}-y_{2n+1}} \leq (1-\alpha_n)\norm{x_{2n}-y_{2n}}+\norm{e_n}
\end{equation}
and
\begin{equation}\label{e:x-yeven}
\norm{x_{2n+2}-y_{2n+2}} \leq (1-\lambda_n)\norm{x_{2n+1}-y_{2n+1}}+\norm{e'_n}.
\end{equation}
Observe  that $\sum_{i=0}^n \norm{e_i}+\sum_{i=0}^n \norm{e'_i}\leq M$. By induction, using \eqref{e:x-yodd} and \eqref{e:x-yeven}, one shows that $\norm{x_n-y_n}\leq M$, for all $n \in \N$. Also from \eqref{e:x-yodd} and \eqref{e:x-yeven} one derives
\begin{equation*}
\norm{x_{2n+2}-y_{2n+2}} \leq (1-\alpha_n)(1-\lambda_n)\norm{x_{2n}-y_{2n}}+\norm{e'_n}+\norm{e_n}.
\end{equation*}
It is easy to see that ${\rm D}$ is a Cauchy rate for $(\sum( \norm{e_n}+\norm{e'_n}))$. Hence by Lemma~\ref{l:Xuquant3}
\begin{equation*}
\forall k \in \N \, \forall n \geq \rho_1[{\rm A},\mathbf{0},\mathbf{0},{\rm D},M](2k+1) \left( \norm{x_{2n}-y_{2n}} \leq \frac{1}{2(k+1)}\right).
\end{equation*}
Since, for $n \geq {\rm E}(2k+1)+1$, it holds that $\norm{e'_n}\leq \frac{1}{2(k+1)}$. The result now follows from  \eqref{e:x-yodd}.
\end{proof}
In a similar way one can obtain a rate of convergence, using Lemma~\ref{l:Xuquant4} instead.
\begin{lemma}\label{l:rate2}
Assume that $\prod (1-\alpha_n)= 0$ (or $\prod (1-\lambda_n)=0$) with a function ${\rm A'}$ satisfying \eqref{q2'}, and $\sum \norm{e_n} < \infty$ and $\sum \norm{e'_n} <\infty$ with Cauchy rates ${\rm E}$ and ${\rm E'}$ respectively. Consider $M \in \N$ such that $M\geq \sum_{i=0}^{{\rm E}(1)}\norm{e_n}+\sum_{i=0}^{{\rm E'}(1)}\norm{e'_n}+1$. Then $ \norm{x_n-y_n} \to 0$ with rate of convergence
 $$\rho(k):=\rho[{\rm A'}, {\rm E}, {\rm E'}, M](k):=\max\{2 \rho_2(2k+1)+1, 2{\rm E}(2k+1)+3\},$$
where $\rho_2(k)=\rho_2[{\rm A'},\mathbf{0},\mathbf{0},{\rm D},M](k)$ is as in Lemma~\ref{l:Xuquant4},
with ${\rm D}(k):=\max\{{\rm E}(2k+1), {\rm E'}(2k+1) \}$ and $\mathbf{0}$ denotes the zero function.
\end{lemma}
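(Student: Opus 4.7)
The plan is to mirror the proof of Lemma~\ref{l:rate1} essentially verbatim, with the only structural change being that the quantitative lemma applied to the reduced recursion is Lemma~\ref{l:Xuquant4} (which takes a rate $\mathrm A'$ satisfying \eqref{q2'}) in place of Lemma~\ref{l:Xuquant3} (which requires a rate of divergence for $\sum \alpha_n$). Since both lemmas have the same shape of hypothesis on the auxiliary sequences and produce rates of convergence for the same kind of recursion, the proof transfers with minimal adjustment.

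Concretely, I would first use the nonexpansiveness of the resolvent functions to derive the two estimates
\begin{equation*}
\norm{x_{2n+1}-y_{2n+1}} \leq (1-\alpha_n)\norm{x_{2n}-y_{2n}}+\norm{e_n}
\end{equation*}
and
\begin{equation*}
\norm{x_{2n+2}-y_{2n+2}} \leq (1-\lambda_n)\norm{x_{2n+1}-y_{2n+1}}+\norm{e'_n},
\end{equation*}
combining them into
\begin{equation*}
\norm{x_{2n+2}-y_{2n+2}} \leq (1-\alpha_n)(1-\lambda_n)\norm{x_{2n}-y_{2n}}+\norm{e_n}+\norm{e'_n}.
\end{equation*}
An easy induction using the Cauchy-rate hypotheses on the error sequences shows that $\norm{x_n - y_n} \leq M$ for every $n \in \N$, so that $(\norm{x_{2n}-y_{2n}})$ is a bounded nonnegative sequence with upper bound $M$.

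Next I would verify that $\mathrm D(k):=\max\{\mathrm E(2k+1), \mathrm E'(2k+1)\}$ is a Cauchy rate for $\sum(\norm{e_n}+\norm{e'_n})$ (this is the same calculation as in Lemma~\ref{l:rate1}). Applying Lemma~\ref{l:Xuquant4} with $s_n:=\norm{x_{2n}-y_{2n}}$, $b_n=c_n\equiv 0$, $d_n:=\norm{e_n}+\norm{e'_n}$ and the given $\mathrm A'$ then yields
\[ \forall k \in \N \, \forall n \geq \rho_2[\mathrm A',\mathbf 0,\mathbf 0,\mathrm D,M](2k+1) \left( \norm{x_{2n}-y_{2n}} \leq \tfrac{1}{2(k+1)}\right). \]

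Finally, to pass from even to odd indices I would invoke the first of the displayed inequalities above: for $n \geq \mathrm E(2k+1)+1$ one has $\norm{e_n}\leq \tfrac{1}{2(k+1)}$, and therefore for $2n+1 \geq \max\{2\rho_2(2k+1)+1, 2\mathrm E(2k+1)+3\} = \rho(k)$ the inequality $\norm{x_{2n+1}-y_{2n+1}} \leq \tfrac{1}{k+1}$ follows. Combining with the even case gives $\norm{x_n - y_n}\leq \tfrac{1}{k+1}$ for all $n \geq \rho(k)$, which is the claim. There is no substantial obstacle here: the only subtle point is book-keeping the factor of $2$ between $n$ and $2n$ (reflected in the definition of $\rho$), exactly as in the proof of Lemma~\ref{l:rate1}.
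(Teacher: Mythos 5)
Your proposal is correct and matches the paper's intended argument exactly: the paper itself proves Lemma~\ref{l:rate2} by remarking that one repeats the proof of Lemma~\ref{l:rate1} verbatim, substituting Lemma~\ref{l:Xuquant4} (with the function $\mathrm{A'}$ satisfying \eqref{q2'}) for Lemma~\ref{l:Xuquant3} and hence $\rho_2$ for $\rho_1$. All the steps you spell out — the two nonexpansiveness estimates, their combination, the boundedness by $M$ via the Cauchy rates, the verification that $\mathrm D$ is a Cauchy rate for the summed errors, and the passage from even to odd indices — coincide with the proof of Lemma~\ref{l:rate1} in the paper.
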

\begin{remark}\label{r:errors}
In \cite[Theorem~3.2]{BM(13)} and \cite[Theorem~8]{BM(12)} several other conditions on the error terms are considered. Under those conditions, one can obtain rates of convergence for $\norm{x_n -y_n} \to 0$ by following the steps in the proofs of Lemmas~\ref{l:rate1} and \ref{l:rate2}. Indeed, the only difference is in the choice of parameters for the functions $\rho_1$ and $\rho_2$. For example, under the condition $\sum \norm{e_n}< \infty$ and $\norm{e'_n}/\alpha_n \to 0$, one would assume the existence of a Cauchy rate ${\rm E}$ for  $(\sum \norm{e_n})$ as before, ${\rm B}$ a rate of convergence for $\norm{e'_n}/\alpha_n \to 0$, and consider $M' \geq \max_{i < {\rm B}(0)}\left\{1,\norm{e'_i}/\alpha_i\right\}+\sum_{i=0}^{{\rm E}(0)}\norm{e_i}+1 $. Then $\rho(k)$ is defined as in Lemmas~\ref{l:rate1} and \ref{l:rate2} but instead with (respectively)
\begin{equation*}
\rho_1[{\rm A},{\rm B}, \mathbf{0},{\rm E},M']   \mbox{\quad and \quad } \rho_2[{\rm A'},{\rm B}, \mathbf{0},{\rm E},M'].
\end{equation*} 
\end{remark}
\begin{remark}\label{r:HPPAalte}
Using the same arguments as in Lemmas~\ref{l:rate1} and \ref{l:rate2} (and Remark~\ref{r:errors}) one shows that $\rho$ is also a rate of convergence for $\norm{w_n-z_n} \to 0$, where $(z_n)$ is generated by \eqref{HPPAalt} and $(w_n)$ is given by the generalized algorithm with error terms \eqref{HPPAalte}, i.e.\ the following  iteration:
\begin{equation}\label{HPPAalte}\tag{\textsf{HPPA}$_2$}
\begin{cases}
w_{2n+1}= \widetilde{\alpha}_n u+(1-\widetilde{\alpha}_n) J_{\widetilde{\beta}_n}^{\A'} \left(w_{2n}\right) +e_n \\ 
w_{2n+2}= \widetilde{\lambda}_n u+(1-\widetilde{\lambda}_n) J_{\widetilde{\mu}_n}^{\B'} \left(w_{2n+1}\right) + e'_n
\end{cases}
\end{equation}
where $w_0=z_0 \in H$ is given, $(\widetilde{\alpha}_n), (\widetilde{\lambda}_n) \subset (0,1) $, $(\widetilde{\beta}_n),(\widetilde{\mu}_n) \subset (0,+\infty)$. 
\end{remark}
\section{Final remarks}\label{s:final}

In this final section we discuss in what way our results establish a finitization of the strong convergence of the method of alternating resolvents towards a common zero.

Section~\ref{s:3} gives an effective metastability bound for $(y_n)$ given by the algorithm \eqref{HPPA2}. Recall that the restriction to monotone functions and to the interval $[n,f(n)]$ are only for convenience as explained in Remark~\ref{r:maj}. We begin by computing a partial bound on the metastability of the iteration which depends on the  conditions $(Q_3)-(Q_5)$ together with $(Q_{\eta})$ and $(Q_{\eta'})$. In Subsection~\ref{s:qrar}, using the conditions $(Q_1),(Q_2),(Q_4),(Q_6)$ and $(Q_7)$, we compute functions $\eta$ and $\eta'$ satisfying conditions $(Q_{\eta})$ and $(Q_{\eta'})$ and thus obtain a metastability bound depending only on the conditions $(Q_1)-(Q_7)$ (cf. Remark~\ref{r:eta}).
 From the metastability of the iteration $(y_n)$ it follows (ineffectively) that $(y_n)$ is a Cauchy sequence. Hence it converges strongly to some point $y \in H$. From Remark~\ref{r:eta} and the continuity of $J^{\A}$ and $J^{\B}$, we conclude that $y$ must be a common zero of the operators $\A$ and $\B$. Furthermore, one can argue that $y$ must be the projection point of $u$ onto $S$. Consider $\widetilde{y}$ such projection point. Since $y_n \to y \in S$, we have $\ip{ u-\widetilde{y},y-\widetilde{y}} \leq 0$ and thus 
\begin{equation*}
\forall k \in \N \, \exists n \in \N \, \forall m \geq n \left(\ip{ u-\widetilde{y},y_m-\widetilde{y}} \leq \frac{1}{k+1} \right).
\end{equation*}
Hence, for all $k$ and $f$, \eqref{e:almostzero} and \eqref{e:prodintsmalli} (in the proof of Theorem~\ref{t:quantBM}) hold with $x_0=\widetilde{y}$, and $n_0$ big enough. Following the proof of Theorem~\ref{t:quantBM} we conclude that $y_{2m} \to \widetilde{y}$ and therefore $y=\widetilde{y}$.

Section~\ref{s:second} studies the metastability of $(y_n)$ given by \eqref{HPPA2}, under different assumptions. Here one drops the conditions $(Q_5),(Q_6)$ and $(Q_7)$. It is however necessary to consider additionally the condition $(Q_8)$. By Theorem~\ref{t:quantBM2} the sequence  $(y_n)$ is metastable and hence convergent to some point $y \in H$. From Corollary~\ref{c:metatoasymp} and the continuity of  $J^{\A}$ and $J^{\B}$, such point $y$ must be a common zero of the operators $\A$ and $\B$. Let  $\widetilde{y}$ be the projection point of $u$ onto $S$. Then the conclusions of Lemmas~\ref{l:case1} and \ref{lemmamaincase2} hold with $x=\widetilde{y}$, which implies $y= \widetilde{y}$. Notice that one cannot guarantee the third assumption in neither of those lemmas. However, those conditions are only required to show equations \eqref{e:innerprodsmall} and \eqref{e:prodintsmall}, respectively, which follow from the fact that $\ip{ u- \widetilde{y}, y-\widetilde{y}} \leq 0$ and the fact that $y_n \to y$. Indeed, let $k$ and monotone $f$ be given. For $n_0$ big enough, we have $\ip{ u-\widetilde{y}, y_{2n+1}-\widetilde{y} \,} \leq \frac{1}{3 \cdot 96(k+1)^2}$ and $\ip{u-\widetilde{y}, y_{2n+2}-\widetilde{y}} \leq \frac{1}{3 \cdot 96(k+1)^2}$, for $n \geq n_0$. If $s^{\widetilde{y}}_{m+1} \leq s^{\widetilde{y}}_m$ for all $m \in [r^{+}(n_0,k), \Psi_5(k,r^{+}(n_0,k),f)]$, the conclusion of Lemma~\ref{l:case1} holds with $n =r^{+}(n_0,k)$ by following its proof after \eqref{e:innerprodsmall} (with $x=\widetilde{y}$). If $s^{\widetilde{y}}_{n_1}<s^{\widetilde{y}}_{n_1+1}$, for some $n_1\in[r^{+}(n_0,k), \Psi_5(k, r^{+}(n_0, k),f)]$, then the conclusion of Lemma~\ref{lemmamaincase2} holds with $n=n_1$. In fact, since for $i \geq n_1$ one has $\tau_{n_1}(i) \geq n_1 \geq r^{+}(n_0,k) \geq n_0$, we have \eqref{e:prodintsmall} (with $x=\widetilde{y}$) and can follow the proof of Lemma~\ref{lemmamaincase2} (with $m=n_0$) after that point. This argument shows 
\begin{equation*}
\forall k \in \N\, \forall f :\N \to \N \, \exists n \, \forall i \in [n,f(n)] \left( \norm{y_{i}-\widetilde{y}}\leq \frac{1}{k+1} \right),
\end{equation*}
implying $y_n \to \widetilde{y}$ and consequently $y= \widetilde{y}$.

Finally, we argue that Section~\ref{s:Reduction} extends the results from Sections~\ref{s:3} and \ref{s:second} to an iteration $(x_n)$ given by \eqref{HPPA2e}, which is a generalized version of the algorithm \eqref{HPPA2} that allows error terms. 
Lemmas~\ref{l:rate1} and \ref{l:rate2} give rates of convergence for $\norm{x_n-y_n} \to 0$. As such, we also have that  $(x_n)$ converges to the projection point of $u$ onto $S$. Moreover, one has a rate of metastability for $(x_n)$. Indeed, let $\rho$ be a rate of convergence for $\norm{x_n-y_n} \to 0$ (as in Lemmas~\ref{l:rate1} or \ref{l:rate2}) and $\mu$ be a rate of metastability for $(y_n)$ (as in Corollary~\ref{c:quantBM} -- together with Remark~\ref{r:eta} -- or Theorem~\ref{t:quantBM2}). Given $k \in \N$ and monotone $f: \N \to \N$, there exists $n_0 \leq \mu(2k+1, \widetilde{f}[\rho(4k+3)])$ such that 
\begin{equation*}
\forall i,j \in [n_0,\widetilde{f}[\rho(4k+3)](n_0)] \left( \norm{y_i-y_j}\leq \frac{1}{2(k+1)}\right).
\end{equation*}
With $n:= \max\{n_0,\rho(4k+3)\}$, we have for all $i,j \in [n,f(n)]$
\begin{equation*}
\norm{x_i-x_j}\leq \norm{x_i-y_i}+\norm{y_i-y_j}+\norm{x_j-y_j}\leq \frac{1}{4(k+1)}+\frac{1}{2(k+1)} +\frac{1}{4(k+1)} \leq \frac{1}{k+1},
\end{equation*}
and so $\max\{\mu(2k+1, \widetilde{f}[\rho(4k+3)]),\rho(4k+3)\}$ is a rate of metastability for $(x_n)$. In a similar way one can extend the quasi-rate of asymptotic regularity for \eqref{HPPA2} (cf. Remark~\ref{r:eta} and Corollary~\ref{c:metatoasymp}) to a quasi-rate of asymptotic regularity for \eqref{HPPA2e}.
The same constructions also hold for the Halpern-type algorithm \eqref{HPPAalte}. In light of Proposition~\ref{MarHalpern} and Remarks~\ref{r:rar} and \ref{r:HPPAalte}, one also obtains a rate of metastability and a quasi-rate of asymptotic regularity for $(w_n)$ generated by \eqref{HPPAalte}.

Altogether our results give a finitization of the proofs of \cite[Theorem~3.2]{BM(13)} (Theorem~\ref{t:BM_exact}) and \cite[Theorem~8]{BM(12)}. Moreover, our quantitative results avoid the use of the metric projection (relying instead on Lemma~\ref{l:projection}) and bypass the sequential weak compactness argument used in the original proofs, applying the macro introduced in \cite[Proposition~4.3]{FFLLPP(19)}.

\section*{Acknowledgements}

\quad \, Both authors acknowledge the support of FCT - Funda\c{c}\~ao para a Ci\^{e}ncia e Tecnologia under the projects: UIDB/04561/2020 and UIDP/04561/2020, and the research center Centro de Matem\'{a}tica, Aplica\c{c}\~{o}es Fundamentais e Investiga\c{c}\~{a}o Operacional, Universidade de Lisboa. 

The second author was also supported by the German Science Foundation (DFG Project KO 1737/6-1).

\bibliography{References}{}
\bibliographystyle{plain}

\end{document}